\DeclareSymbolFont{AMSb}{U}{msb}{m}{n}
\DeclareMathAlphabet{\mathbbm}{U}{bbm}{m}{n}
\definecolor{auburn}{rgb}{0.43, 0.21, 0.1}
    \DeclareSymbolFont{usualmathcal}{OMS}{cmsy}{m}{n}
    \DeclareSymbolFontAlphabet{\mathcal}{usualmathcal}
\numberwithin{equation}{section}
\renewenvironment{proof}{{\scshape Proof.}}{\qed\medskip}
\def\be{\begin{equation}}    
\def\ee{\end{equation}}
\def\bitem{\begin{itemize}}
\def\eitem{\end{itemize}}
\def\WL{\mathsf{WL}}
\def\benum{\begin{enumerate}}
\def\bw{\bigwedge}
\def\wt{\mathsf{wt}}
\def\eenum{\end{enumerate}}
\def\sra{\rightarrow}
\def\Scal{{\mathcal S}}
\def\nC{{\widetilde{C}}}
\def\Hcal{\mathcal H}
\def\Fcal{\mathcal F}
\def\rds{{\omega_\pi}}
\def\CC{\mathbb C}
\def\Ccal{{\mathcal C}}
\def\ra{\rightarrow}
\def\surj{\twoheadrightarrow}
\def\AA{\mathbb{A}}
\def\EE{\mathbb E}
\def\PP{\mathbb P}
\def\TS{\mathfrak X}
\def\C{\mathbb C}
\def\P{\mathbb P}
\def\Q{\mathbb Q}
\def\wrp{{{\mathbb W}_\pi}}
\def\N{\mathbb N}
\def\O{\mathscr O}
\DeclareMathOperator{\Pic}{Pic}
\DeclareMathOperator{\ord}{ord}
\DeclareMathOperator{\Spec}{Spec\,}
\DeclareMathOperator{\GL}{GL}
\DeclareMathOperator{\dd}{d}
\DeclareMathOperator{\Sym}{Sym}
\DeclareMathOperator{\Ext}{Ext}
\DeclareMathOperator{\AD}{AD}
\newenvironment{citazione}
  {\begin{quotation}}
  {\end{quotation}}
\newenvironment{proofof}[1]{\par
  \pushQED{\qed}%
  \normalfont \topsep6\p@\@plus6\p@\relax
  \trivlist
  \item[\hskip\labelsep
        \scshape
    Proof of #1\@addpunct{.}]\ignorespaces
}{%
  \popQED\endtrivlist\@endpefalse
}
\newtheoremstyle{conv} % <name>
        {4mm}% <Space above>
        {4mm}% <Space below>
        {\rmfamily}% <Body font>
        {4mm}% <Indent amount>
        {\itshape}% <Theorem head font>
        {.}% <Punctuation after theorem head>
        {1mm}% <Space after theorem head>
        {}% <Theorem head spec (can be left empty, meaning 'normal')> 
\theoremstyle{conv}
\newtheorem*{notation*}{Notation}
\newtheorem*{conventions}{Conventions}
\newtheoremstyle{thm} % <name> % (ambienti con dimostrazione)
        {4mm}% <Space above>
        {4mm}% <Space below>
        {\slshape}% <Body font> % 
        {4mm}% <Indent amount>
        {\scshape}% <Theorem head font>
        {.}% <Punctuation after theorem head>
        {1mm}% <Space after theorem head>
        {}% <Theorem head spec (can be left empty, meaning 'normal')> 
\theoremstyle{thm}
\newtheorem{prop}{Proposition}[section]
\newtheorem*{teo*}{Theorem}
\newtheorem{lemma}[prop]{Lemma}
\newtheorem{teo}{Theorem}[section]
\theoremstyle{definition}
\newtheorem{example}[prop]{Example}
\newtheorem{definition}[prop]{Definition}
\newtheorem{remark}[prop]{Remark}
\tikzset{commutative diagrams/arrow style=math font}
\tikzset{commutative diagrams/.cd,
mysymbol/.style={start anchor=center,end anchor=center,draw=none}}
\newcommand*{\defeq}{\mathrel{\vcenter{\baselineskip0.5ex \lineskiplimit0pt
                     \hbox{\scriptsize.}\hbox{\scriptsize.}}}%
                     =}
\title{Jet Bundles on Gorenstein Curves and Applications}
\author[L. Gatto]{Letterio Gatto}
\address{Politecnico di Torino}
\email{letterio.gatto@polito.it}
\author[A. T. Ricolfi]{Andrea T. Ricolfi}
\address{Max-Planck-Institut f\"{u}r Mathematik}
\email{atricolfi@mpim-bonn.mpg.de}
\thanks{The first author was partially supported by INDAM-GNSAGA and by PRIN ``Geometria sulle variet\`a algebriche''. The visit of the  first author to Stavanger and of the second author to Torino was supported by the grant ``Finanziamento Diffuso della Ricerca'' by Politecnico di Torino}
\begin{document}

\dedicatory{Dedicated to Professor Goo Ishikawa, on the occasion of his 60th birthday}

\begin{abstract}
In the last twenty years a number of papers appeared aiming to construct locally free replacements of the sheaf of principal parts for families of Gorenstein curves.
The main goal of this survey is to present to the widest possible mathematical audience a catalogue of such constructions, discussing the related literature and reporting on a few  applications to classical problems in Enumerative Algebraic Geometry. 
\end{abstract}

\maketitle
\tableofcontents

\section{Introduction}
The purpose of this expository paper is to present a catalogue of locally free replacements of the sheaves of {\em principal parts} for (families of) Gorenstein curves. In the smooth category, locally free sheaves of principal parts  are better known as {\em jet bundles}, understood as those locally free sheaves whose transition functions reflect the transformation rules of the partial derivatives of a local section under a change of local coordinates (more details in Section~\ref{sec78798}). 
Being a natural globalisation of the fundamental notion of Taylor expansion of a function in a neighborhood of a point, jet bundles are ubiquitous in Mathematics. They proved powerful tools for the study of deformation theories within a wide variety of mathematical situations and have a number of purely algebraic incarnations: besides the aforementioned \emph{principal parts} of a quasi-coherent sheaf \cite{MR0238860} we should mention, for instance, the theory of {\em arc spaces} on algebraic varieties \cite{DenefLoeser1,LooijengaMM}, introduced by Nash in \cite{Nash} to deal with resolutions of singular loci of singular varieties.

The issue we want to cope with in this survey is that sheaves of principal parts of vector bundles defined on a singular variety $X$ are not locally free. Roughly speaking, the reason is that the analytic construction carried out in the smooth category, based on gluing local expressions of sections together with their partial derivatives, up to a given order, is no longer available. Indeed, around singular points there are no local parameters with respect to which one can take derivatives. This is yet another way of saying that the sheaf $\Omega^1_X$ of sections of the cotangent bundle is not locally free at the singular points. 

If $C$ is a projective reduced singular curve, it is desirable, in many interesting situations, to dispose of a notion of global derivative  of a regular section. If the singularities of $C$ are mild, that is, if they are \emph{Gorenstein}, locally free substitutes of the classical principal parts can be constructed by exploiting a natural derivation $\O_C\ra \omega_C$, taking values in the dualising sheaf, which by the Gorenstein condition is an \emph{invertible} sheaf. This allows one to mimic the usual procedure adopted in the smooth category. Related constructions have recently been reconsidered by A.~Patel and A.~Swaminathan in \cite{InvParts}, under the name of {\em sheaves of invincible parts}, motivated by the classical problem of counting hyperflexes in one-parameter families of plane curves. Besides \emph{loc.~cit.}, locally free jets on Gorenstein curves have been investigated by a number of authors, starting about twenty years ago \cite{LaksovThorup1,LaksovThorup3,LaksovThorup4,Esteves2, Gatto2}. The reader can consult \cite{Esteves1,Gatto1,GattoEsteves}, and the references therein, for several applications.

%%%%%%%%%%%%%%%%%%%%%%%%%%%%%%%%%%%%%%%%%%%%%%%%%%%%%%%%%%%%%%%%%%%%%%%
\subsection{The role of jet bundles in Algebraic Geometry}

The importance of jet extensions of line bundles in algebraic geometry emerges from their ability to  provide the proper flexible framework where to formulate and solve elementary but classical enumerative questions, such as: 
\bitem
\item [(i)] How many flexes does a plane curve possess?
\item [(ii)] How many members in a generic pencil of plane curves have a hyperflex?
\item [(iii)] How many fibres in a one-parameter family of curves of genus $3$ are hyperelliptic?
\item [(iv)] What is the class, in the rational Picard group of $\overline{M}_g$,  the moduli space of stable curves of genus $g$, of the closure of the locus of smooth curves possessing a special Weierstrass point?
\eitem

\medskip
We will touch upon each of these problems in this survey report.

%%%%%%%%%%%%%%%%%%%%%%%%%%%%%%%%%%%%%%%%%%%%%%%%%%%%%%%%%%%%%%%%%%%%%%%
\subsection{Wronskian sections over Gorenstein curves} 
A theory of {\em ramification points of linear systems on Gorenstein curves} was proposed in 1984 by C.~Widland in his Ph.D. thesis, also exposed in a number of joint papers with Robert F.~Lax \cite{WidLax1, WidLax2}. The dualising sheaf $\omega_C$ on an integral curve $C$, first defined by Rosenlicht \cite{Rosenlicht} via residues on the normalisation $\widetilde C$, can be realised as the sheaf of regular differentials on $C$, as explained by Serre in \cite[Ch.~4 \S~3]{serre2}.
There is a natural map $\Omega^1_C\sra \omega_C$ allowing one to define a derivation $\dd\colon\O_C\sra \omega_C$, by composition with the universal derivation $\O_C\sra \Omega^1_C$. Differentiating local regular functions by means of this composed differential allowed  Widland \cite{WidlandTh} and Lax  to define a global \emph{Wronskian section} associated to a linear system on a Gorenstein curve $C$, coinciding with the classical one for smooth curves.

As a quick illustration of how  such construction works, consider a plane curve $\iota\colon C\hookrightarrow \P^2$ of degree $d$, carrying the degree $d$ line bundle $\O_C(1)=\iota^\ast\O_{\P^2}(1)$. The Wronskian  by Widland and Lax vanishes along all the flexes of $C$, but also at singular points. The total order of vanishing equals the number of flexes on a smooth curve of the same degree. For example, if $C$ is an irreducible nodal plane cubic, the Wronskian associated to the bundle $\O_C(1)$ would  vanish at three smooth flexes, but also at the node with multiplicity $6$. If $C$ were cuspidal, the Wronskian  would vanish at the unique smooth flex, and at the cusp with multiplicity $8$. In all cases the ``total number'' (which is $9$) of inflection points is conserved. 

In sum, the  Wronskian defined by Widland and Lax is able to recover the classical Pl\"ucker formula counting smooth flexes on singular curves, but within a framework that is particularly suited to deal with degeneration problems, provided one learns how to extend  it to families. For families of smooth curves, as pointed out by Laksov \cite{Laksov1},  the Wronskian section of a relative line bundle should be thought of as the determinant of a map from the pullback of the Hodge bundle to a jet bundle. The theory by Widland and Lax, however, was lacking a suitable notion of jet bundles for Gorenstein curves, as Ragni Piene \cite{Piene} remarked in her AMS review  of \cite{WidLax1}:

\begin{citazione}
``This (Widland and Lax) Wronskian is a section of the line bundle $L^{\otimes s}\otimes \omega_C^{\otimes (s-1)s/2}$ , where $s\defeq 
\dim H^0 (X, L)$. They define the section locally and show that it patches. (In the classical case in which $X$ is smooth, one easily defines the Wronskian globally, by using the
$(s-1)$st sheaf of principal parts on $X$ of $L$. To do this in the present case, one would need a generalisation of these sheaves, where $\omega$ plays the role of $\Omega^1_X$. Such a generalisation is known only for $s=2$.)''
\end{citazione}

These generalisations are nowadays available in the aforementioned references. In the last two sections we will present a few applications and open questions arising from the use of such an extended notion of jet bundles for one-parameter families of stable curves. 

%%%%%%%%%%%%%%%%%%%%%%%%%%%%%%%%%%%%%%%%%%%%%%%%%%%%%%%%%%%%%%%%%%%%%%%%%
\subsection{Overview of contents}

In the first section we describe the construction of principal parts, jet bundles (with a glimpse on an abstract construction by Laksov and Thorup) and {\em invincible parts} by Patel and Swaminathan. In Section \ref{sec:appl} we describe two applications of locally free replacements: the enumeration of hyperflexes in families of plane curves via automatic degeneracies \cite{InvParts}, and the determination of the class of the stable hyperelliptic locus in genus $3$ \cite{Esteves1}. In Section \ref{sec:ram} we define ramification points of linear systems on smooth curves; we introduce the classical Wronskian section attached to a linear system and state the associated Brill--Segre formula. In Section \ref{WL} we describe a generalisation to Gorenstein curves, due to Lax and Widland. In Section \ref{sec:specialWP} we review the main ingredients needed in the  computation of the class in $\Pic(\overline M_g)\otimes \Q$ of the locus of curves possessing a special Weierstrass point as in \cite{Gatto1}. In Section \ref{sec:Examples} we propose a few examples and some natural but still open questions.

\medskip
\begin{conventions}
All schemes are Noetherian and defined over $\C$. Any scheme $X$ comes equipped with a sheaf of $\C$-algebras $\O_X$. If $U\subset X$ is an open subset in the Zariski (resp.~analytic) topology, then $\O_X(U)$ is the ring of regular (resp.~holomorphic) functions on $U$. A \emph{curve} is a reduced, purely $1$-dimensional scheme of finite type over $\C$. We denote by $K_C$ the canonical line bundle of a smooth curve $C$. In the presence of singularities, we will write $\omega_C$ for the dualising sheaf. We denote by $\Omega^1_\pi$ the sheaf of relative K\"{a}hler differentials on a (flat) family of curves $\pi\colon X\ra S$.
\end{conventions}

\bigskip
{\bf Acknowledgment.} Both authors are grateful to the anonymous referee for carefully reading the paper and for providing valuable comments, that definitely improved the shape of the paper in terms of clarity and readability. The first author is also indebted to Professor Stanis\l aw (Staszek) Janeczko for encouraging support. The second author wishes to thank Max-Planck Institut f\"{u}r Mathematik for support.
\smallskip

{\em This paper is dedicated to Professor Goo Ishikawa, on the occasion of the celebration (Goo '60) of his sixtieth birthday, wishing him many more years of new beautiful theorems.}

%%%%%%%%%%%%%%%%%%%%%%%%%%%%%%%%%%%%%%%%%%%%%%%%%%%%%%%%%%%%%%%%%%%%
\section{Principal parts, jets and invincible parts}\label{sec:sgnacchera}

This first section is devoted to recall the definition and properties of the sheaves of principal parts and to introduce a couple of related constructions: jets of vector bundles, especially those of rank $1$, and the Patel--Swaminathan invincible parts. We start by giving the general idea of jets, which blends their analytic construction with the algebraic presence of the dualising sheaf.

These constructions lead to the technique of locally free replacements of principal parts for families of curves with at worst Gorenstein singularities. They are intended  to deal with degenerations of ramification points of  linear systems in one parameter families of curves of fixed arithmetic genus. In fact, 
in Section \ref{sec:appl} we shall give two applications to see the theory in action: the count of hyperflexes in a pencil, as performed in \cite{InvParts}, and the determination of the class of the stable hyperelliptic locus in genus $3$, as worked out by Esteves \cite{Esteves1}.

%%%%%%%%%%%%%%%%%%%%%%%%%%%%%%%%%%%%%%%%%%%%%%%%%%%%%%%%%%%%%%%%%%%%%%%
\subsection{The idea of jets}

Our guiding idea is the following ansatz, which we shall implement below only in the case of algebraic curves. Let $X$ be a (not necessarily smooth) complex algebraic variety of dimension $r$. 
%Let $\delta_X\colon\O_X\sra \Omega^1_X$ be its universal derivation, where $\Omega^1_X$ is the sheaf of universal differentials. 
If $X$ is not smooth, the sheaf of differentials $\Omega^1_X$ is not locally free. Even in this case it is possible to construct, in a purely algebraic fashion, the sheaf of principal parts (see Section \ref{sec:pp}) attached to  any quasi-coherent sheaf $\mathscr M$. If $X$ is singular, this sheaf is not locally free (even if $\mathscr M$ is locally free), and this makes harder its use  even to solve elementary enumerative problems. But suppose one has an $\O_X$-module homomorphism $\phi\colon\Omega^1_X\ra \mathscr M$, where $\mathscr M$ is a locally free sheaf of rank $r=\dim X$. This induces a derivation $\dd\colon\O_X\sra \mathscr M$ obtained by composing $\phi$ with the universal derivation $\O_X\sra \Omega^1_X$ attached to $X$. 
Let $P\in X$ be a point and $U$ an open neighborhood of $P$ trivialising $\mathscr M$, that is,
$$
\mathscr M(U)=\O(U)\cdot m_1\oplus\cdots\oplus \O(U)\cdot m_r.
$$
Such a trivialisation allows one to define partial derivatives with respect to the generators $m_1,\ldots,m_r\in \mathscr M(U)$. In the smooth case, and taking $\mathscr M=\Omega^1_X$, these generators can just be taken to be the differentials of a local system of parameters around $P$.
Following an idea essentially due to Lax and Widland, one defines for each $f\in \O(U)$ its ``partial derivatives'' $d_if\in \O(U)$ by means of the relation
\[
\dd f = \sum_{i=1}^rd_if\cdot m_i
\] 
in $\mathscr M(U)$. Iterating this process in the obvious way, one can define higher order partial derivatives (with respect to $m_1,\ldots,m_r$), and thus jet bundles, precisely as in the smooth category.

%%%%%%%%%%%%%%%%%%%%%%%%%%%%%%%%%%%%%%%%%%%%%%%%%%%%%%%%%%%%%%%%%%%%%%%
\subsection{Dualising sheaves} 
This technical section can be skipped at a first reading. It will be applied below in special cases only, but it is important because it puts the subject in the perspective of new applications.

Any proper flat family of curves $\pi\colon X\ra S$ has a dualising complex $\omega_\pi^\cdot \defeq \pi^!\O_S$. Here $\pi^!$ is the right adjoint to $R\pi_\ast$. The cohomology sheaf of the dualising complex
\[
\omega_\pi = h^{-1}(\omega_\pi^\cdot),
\]
in degree $-1$ (where $1$ is the relative dimension of $\pi$) is called the \emph{relative dualising sheaf} of the family.
Its formation commutes with arbitrary base change; for instance, we have
\[
\omega_\pi\big|_{X_s} = \omega_{X_s}
\]
for $X_s = \pi^{-1}(s)$ a fibre of $\pi$.

\begin{example}\label{gnagna98}
Let $\pi\colon X\ra S$ be a local complete intersection morphism. This means that there is a factorisation $\pi\colon X\ra Y\ra S$ with $i\colon X\ra Y$ a regular immersion and $Y\ra S$ a smooth morphism. Then one can compute the dualising sheaf of $\pi$ as
\be\label{omegalci}
\omega_\pi = \det(\mathscr I/\mathscr I^2)^\vee\otimes_{\O_X}i^\ast\det \,\Omega^1_{Y/S},
\ee
where $\mathscr I\subset \O_Y$ is the ideal sheaf of $X$ in $Y$. 
Every curve in a smooth surface is a local complete intersection scheme. For instance, if $i\colon C\hookrightarrow \P^2$ is a plane curve of degree $d$,  the ideal sheaf of $i$ is $\O_{\P^2}(-d)$ and so \eqref{omegalci} yields
\[
\omega_{C} = \O_C(d)\otimes_{\O_C} i^\ast\det\,\Omega^1_{\P^2} = \O_C(d-3).
\]
\end{example}

\begin{definition}\label{def:gor323}
A (proper) $\C$-scheme $X$ is said to be \emph{Cohen--Macaulay} if its dualising complex $\omega_X^\cdot$ is quasi-isomorphic to a sheaf. When this sheaf, necessarily isomorphic to $\omega_X$, is invertible, $X$ is called \emph{Gorenstein}.
%ism $\pi\colon X\ra S$ is called 
\end{definition}

For a proper flat morphism $\pi\colon X\ra S$, the relative dualising sheaf
$\omega_\pi$ is invertible precisely when $\pi$ has Gorenstein fibres.

%%%%%%%%%%%%%%%%%%%%%%%%%%%%%%%%%%%%%%%%%%%%%%%%%%%%%%%%
\subsection{Principal parts}\label{sec:pp}    % PRINCIPAL PARTS
Sheaves of principal parts were introduced in \cite[Ch.~16.3]{MR0238860}.
Let $\pi\colon X\ra S$ be a morphism of schemes, $\mathscr I$ the ideal sheaf of the diagonal $\Delta\colon X\ra X\times_SX$ and denote by $\Omega^1_\pi=\Delta^\ast(\mathscr I/\mathscr I^2)$ the sheaf of relative K\"{a}hler differentials. Let $p$ and $q$ denote the projections $X\times_SX\ra X$, and denote by $\Delta_k\subset X\times_SX$ the closed subscheme defined by $\mathscr I^{k+1}$, for every $k\geq 0$. Then, for every quasi-coherent $\O_X$-module $E$, the sheaf
\[
P^k_\pi(E) \defeq p_\ast\left(q^\ast E\otimes \O_{\Delta_k}\right)
\]
is quasi-coherent and is called the $k$-th \emph{sheaf of principal parts} associated to the pair $(\pi,E)$. When $S=\Spec \C$ we simply write $P^k(E)$ instead of $P^k_\pi(E)$.

\begin{prop}\label{prop:ses}
Let $\pi\colon X\ra S$ be a smooth morphism, $E$ a quasi-coherent $\O_X$-module. The sheaves of principal parts fit into right exact sequences
\[
E\otimes \Sym^k\Omega^1_\pi\ra P^k_\pi(E) \ra P^{k-1}_\pi(E) \ra 0
\]
for every $k\geq 1$. If $E$ is locally free then the sequence is exact on the left, and $P^k_\pi(E)$ is locally free for all $k\geq 0$.
\end{prop}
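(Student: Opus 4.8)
The plan is to derive the sequence from the canonical filtration of the structure sheaves of the diagonal thickenings by powers of $\mathscr I$, and then to push forward along $p$. The starting point is the short exact sequence of $\O_{X\times_SX}$-modules
\[
0\lra \mathscr I^k/\mathscr I^{k+1}\lra \O_{\Delta_k}\lra \O_{\Delta_{k-1}}\lra 0,
\]
whose three terms are all supported on the diagonal. Tensoring with $q^\ast E$ over $\O_{X\times_SX}$ is right exact, so it produces a right exact sequence; when $E$ is locally free it is in particular flat, and the sequence stays exact on the left.

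Next I would apply $p_\ast$. The crucial observation is that the restriction of $p$ to each $\Delta_k$ is a finite, hence affine, morphism: topologically $\Delta_k$ is just the diagonal $\isom X$, and locally, writing $X=\Spec A$ over $S=\Spec R$, the ring $(A\otimes_RA)/\mathscr I^{k+1}$ is a finitely generated $A$-module. Consequently $p_\ast$ is exact on quasi-coherent sheaves supported on the diagonal thickenings, and applying it to the tensored sequence yields the asserted right exact sequence, with $P^k_\pi(E)$ in the middle and $P^{k-1}_\pi(E)$ on the right, exact on the left as well in the locally free case.

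It then remains to identify the left-hand term. Since $q\circ\Delta = p\circ\Delta = \mathrm{id}_X$, restricting $q^\ast E$ to the diagonal recovers $E$, while the smoothness of $\pi$ makes $\Delta$ a regular immersion, so that the canonical surjection $\Sym^k(\mathscr I/\mathscr I^2)\surj \mathscr I^k/\mathscr I^{k+1}$ is an isomorphism. Transporting along $\Delta$ and using $\Delta^\ast(\mathscr I/\mathscr I^2)=\Omega^1_\pi$ identifies $p_\ast\bigl(q^\ast E\otimes \mathscr I^k/\mathscr I^{k+1}\bigr)$ with $E\otimes\Sym^k\Omega^1_\pi$. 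I expect this identification, precisely the point where smoothness is indispensable, to be the main obstacle; it may either be quoted from the theory of regular immersions or checked by hand in local coordinates, the ideal $\mathscr I$ being generated by the elements $t_i\otimes 1 - 1\otimes t_i$.

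Finally, local freeness follows by induction on $k$. The base case is $P^0_\pi(E) = p_\ast(q^\ast E\otimes\O_{\Delta_0}) = E$, using $\O_{\Delta_0}=\O_\Delta$ and $p\circ\Delta=\mathrm{id}_X$. Since $\pi$ is smooth, $\Omega^1_\pi$, and therefore $\Sym^k\Omega^1_\pi$, is locally free, so for locally free $E$ both outer terms of the now short exact sequence are locally free. An extension of a locally free sheaf by a locally free sheaf is locally free, as the sequence splits locally, whence $P^k_\pi(E)$ is locally free and the induction closes.
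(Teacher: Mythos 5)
Your proof is correct and follows essentially the same route as the paper: tensor the filtration sequence $0\ra \mathscr I^k/\mathscr I^{k+1}\ra \O_{\Delta_k}\ra \O_{\Delta_{k-1}}\ra 0$ with $q^\ast E$, push forward along $p$, identify the left term via $\Delta^\ast$ and the regularity of the diagonal (smoothness), and conclude local freeness by induction from $P^0_\pi(E)=E$. The only (harmless) difference is cosmetic: you justify exactness of $p_\ast$ by the affineness of $p|_{\Delta_k}$, whereas the paper argues via the vanishing of $R^1p_\ast$ on sheaves supported on the diagonal $\Delta_0$, where $p$ restricts to an isomorphism.
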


\begin{proof}
Consider the short exact sequence
\[
0\ra \mathscr I^k/\mathscr I^{k+1}\ra \O_{\Delta_k}\ra\O_{\Delta_{k-1}}\ra 0. 
\]
Tensoring it with $q^\ast E$ gives an exact sequence
\be\label{rightex}
q^\ast E\otimes \mathscr I^k/\mathscr I^{k+1}\xrightarrow{\epsilon} q^\ast E\otimes \O_{\Delta_k} \ra q^\ast E\otimes \O_{\Delta_{k-1}}\ra 0.
\ee
The sheaf $q^\ast E\otimes \mathscr I^k/\mathscr I^{k+1}$ is supported on the diagonal $\Delta_0\subset X\times_SX$, and the same is true for its quotient $\mathscr Q\defeq(q^\ast E\otimes \mathscr I^k/\mathscr I^{k+1})/\ker\epsilon \subset q^\ast E\otimes \O_{\Delta_k}$. Since $p|_{\Delta_0}$ is an isomorphism, we have $R^ip_\ast \mathscr F = 0$ for all $i>0$ and all sheaves $\mathscr F$ supported on $\Delta_0$.
Therefore, applying $p_\ast$ to \eqref{rightex} we obtain 
\be\label{res98}
p_\ast \left(q^\ast E\otimes \mathscr I^k/\mathscr I^{k+1}\right)\ra P^k_\pi(E)\ra P^{k-1}_\pi(E) \ra R^1p_\ast\mathscr Q=0,
\ee
which is the required exact sequence, since
\begin{align*}
p_\ast \left(q^\ast E\otimes \mathscr I^k/\mathscr I^{k+1}\right)
&=\Delta^\ast \left(q^\ast E\otimes \mathscr I^k/\mathscr I^{k+1}\right)\\
&=\Delta^\ast q^\ast E\otimes \Delta^\ast \left(\mathscr I^k/\mathscr I^{k+1}\right)\\
&=E\otimes \Delta^\ast \Sym^k\left(\mathscr I/\mathscr I^2\right) \\
&=E\otimes \Sym^k\Omega^1_\pi.
\end{align*}
We used smoothness of $\pi$ to ensure that $\mathscr I$ is locally generated by a regular sequence. This allowed us to make the identification $\mathscr I^k/\mathscr I^{k+1} = \Sym^k(\mathscr I/\mathscr I^2)$ in the third equality above.
If $E$ is locally free, then \eqref{rightex} is exact on the left, and the same is true for \eqref{res98}, so that local freeness of $P^k_\pi(E)$ follows by induction exploiting the resulting short exact sequence and the base case provided by $P^0_\pi(E) = E$.
\end{proof}

\begin{example}
Suppose $\pi\colon X\ra S$ is smooth. Then there is a splitting $P^1_\pi(\O_X) = \O_X \oplus \Omega^1_\pi$. For an arbitrary vector bundle $E$, the splitting of the first bundle of principal parts usually fails even when $S$ is a point. In fact, in this case, the splitting is equivalent to the vanishing of the \emph{Atiyah class} of $E$, which by definition is the extension class
\[
A(E)\in \Ext^1_X(E,E\otimes \Omega_X^1)
\]
attached to the short exact sequence of Proposition \ref{prop:ses} taken with $k=1$. But the vanishing of the Atiyah class is known to be equivalent to the existence of a holomorphic connection on $E$.
\end{example}

Note that for every quasi-coherent sheaf $E$ on $X$ one has a canonical map
\be\label{map:82929}
\nu\colon \pi^\ast\pi_\ast E \ra p_\ast q^\ast E \ra P^k_\pi(E),
\ee
where the first one is an isomorphism when $\pi$ is flat, and the second one comes from applying $p_\ast(q^\ast E\otimes -)$ to the surjection $\O\surj \O_{\Delta_k}$.

\begin{example}
To illustrate the classical way of dealing with bundles of principal parts, we now compute the number $\delta$ of singular fibres in a general pencil of hypersurfaces of degree $d$ in $\P^n$. This calculation will be used in Subsection \ref{counthyperflex}.
The number $\delta$ is nothing but the degree of the discriminant hypersurface in the space of degree $d$ forms on $\P^n$, which in turn is the degree of
\[
c_n(P^1(\O_{\P^n}(d)))\in A^n(\P^n).
\]
By Proposition \ref{prop:ses}, the bundle $P^1(\O_{\P^n}(d))$ is an extension of $\O_{\P^n}(d)$ by $\Omega^1_{\P^n}(d)$. The Euler sequence
\[
0\ra \Omega^1_{\P^n}\ra \O_{\P^n}(-1)^{n+1}\ra \O_{\P^n}\ra 0
\]
twisted by $\O_{\P^n}(d)$ says that the same is true for the bundle $\O_{\P^n}(d-1)^{n+1}$. Then the Whitney sum formula implies that
\[
c(P^1(\O_{\P^n}(d))) = c(\O_{\P^n}(d-1)^{n+1}) = (1+(d-1)\zeta)^{n+1},
\]
where $\zeta\in A^1(\P^n)$ is the hyperplane class. Computing the $n$-th Chern class gives
\be\label{numberofnodes}
\delta = (n+1)\cdot (d-1)^n.
\ee
\end{example}

%%%%%%%%%%%%%%%%%%%%%%%%%%%%%%%%%%%%%%%%%%%%%%%%%%%%%%%%%%
\subsection{Jet bundles}\label{sec78798}

Let $\pi\colon X\ra S$ be a quasi-projective local complete intersection morphism of constant relative dimension $d\geq 0$. Let $\Omega^1_\pi$ be the sheaf of relative differentials, and $\Omega^d_\pi$ its $d$-th exterior power. Then there exists a canonical morphism $\Omega^d_\pi\ra \omega_\pi$ restricting to the identity over the smooth locus of $\pi$ (see Corollary 4.13 in \cite[Section 6.4]{MR1917232} for a proof).
The construction goes as follows. Let $X\ra Y\ra S$ be a factorisation of $\pi$, with $i:X\ra Y$ a regular immersion with ideal $\mathscr I\subset \O_Y$ and $Y\ra S$ smooth. The exact sequence
\[
\mathscr I/\mathscr I^2\ra i^\ast\Omega^1_{Y/S}\ra \Omega^1_\pi\ra 0
\]
induces a canonical map
\[
\mu_Y\colon \Omega_\pi^d\otimes \det\,\mathscr I/\mathscr I^2 \ra i^\ast\det\,\Omega^1_{Y/S}.
\]
According to \eqref{omegalci}, tensoring $\mu_Y$ with the dual of $\det\,\mathscr I/\mathscr I^2$ gives a morphism $\Omega_\pi^d\ra \omega_\pi$. It is not difficult to see that this map does not depend on the choice of factorisation.

\medskip
A natural morphism of sheaves 
$\Omega^1_\pi \ra \omega_\pi$, restricting to the identity on the smooth locus of $\pi$, exists for arbitrary flat families $\pi\colon (X,x_0)\ra (S,0)$ of germs of reduced curves \cite[Prop.~4.2.1]{MR571575}. 
More generally, the results in \cite[Sec.~4.4]{MR518299} show that a natural morphism \be\label{gnocca}
\phi\colon \Omega^d_\pi \ra \omega_\pi,
\ee
can be constructed for every flat morphism $\pi\colon X\ra S$ of relative dimension $d$ over a reduced base $S$ (and over a field of characteristic zero). 

We now apply this construction to flat families $\pi\colon X\ra S$ of \emph{Gorenstein} curves (so for $d = 1$), taking advantage of the invertibility of $\omega_\pi$ in order to construct locally free jets. When dealing with such families, we will therefore assume to be working over a reduced base, which will be enough for all our applications.
Composing $\phi$ with the exterior derivative homomorphism $\dd\colon \O_X\ra \Omega^1_\pi$ attached to the family gives an $\O_S$-linear derivation
\be\label{der:dpi}
\textrm d_\pi\colon \O_X\ra \omega_\pi.
\ee
For every integer $k\geq 0$ and line bundle $L$ on $X$, there exists a vector bundle
\be\label{JetL}
J^k_\pi(L)
\ee
of rank $k+1$ on $X$, called the $k$-\emph{th jet extension} of $L$ relative to the family $\pi$. We refer to \cite[Section 2]{Gatto1} for its detailed construction in the case of stable curves. The same construction (as well as the proof of Proposition \ref{prop:exactsequence} below) extends to any family of Gorenstein curves as one only uses the map $\Omega^1_\pi\sra \omega_\pi$ and the invertibility of the relative dualising sheaf. The bundle \eqref{JetL} depends on the derivation $\textrm d_\pi$ (although we do not emphasise it in the notation), and formalises the idea of taking derivatives (with respect to $\textrm d_\pi$) of sections of $L$ along the fibres of $\pi$. It can be thought of as a holomorphic, or algebraic, analogue of the $\mathcal C^\infty$ bundle of coefficients of the Taylor expansion of the smooth functions on a differentiable manifold.
When $S = \Spec \C$ we simply write $J^k(L)$.

We now sketch the construction of the jet bundle \eqref{JetL}. Suppose we have an open covering $\mathcal U = \set{U_\alpha}$ of $X$, trivialising $\omega_\pi$ and $L$ at the same time, with generators $\epsilon_\alpha\in\omega_\pi(U_\alpha)$ and $\psi_\alpha\in L(U_\alpha)$ respectively over the ring of functions on $U_\alpha$.
Then for every non constant global section $\lambda\in H^0(X,L)$ we can write
\[
\lambda|_{U_\alpha} = \rho_\alpha\cdot \psi_\alpha\in L(U_\alpha)
\]
for certain functions $\rho_\alpha\in \O_X(U_\alpha)$.
Then one can define operators $D_\alpha^i\colon \O_{U_\alpha}\ra \O_{U_\alpha}$ inductively for $i\geq 0$, by letting $D_\alpha^0(\rho_\alpha) = \rho_\alpha$ and by the relation
\[
\textrm d_\pi(D_\alpha^{i-1}(\rho_\alpha)) = D_\alpha^i(\rho_\alpha)
\cdot \epsilon_\alpha.
\]
It is then an easy technical step to show that over the intersection $U_{\alpha\beta} = U_\alpha\cap U_\beta$, the $(k+1)$-vectors $(D^i_\alpha(\rho_\alpha))^T$ and $(D^i_\beta(\rho_\beta))^T$ differ by a matrix $M_{\alpha\beta}\in \GL_{k+1}(\O_{U_{\alpha\beta}})$, and that in fact the data $\{M_{\alpha\beta}\}$ define a $1$-cocycle with respect to $\mathcal U$. The verification of this fact uses that $\textrm d_\pi$ is a derivation. The upshot is that the vectors $(D^i_\alpha(\rho_\alpha))$ glue to a global section 
\be\label{troiazione}
D^k\lambda
\ee
of a well defined vector bundle $J^k_\pi(L)$.
Moreover, the bundle obtained comes with a natural $\C$-linear morphism
\be\label{deltamap}
\delta\colon \O_X\ra J^k_\pi(L)
\ee
such that if $J^k_\pi(L)|_{U_\alpha}$ is free with basis $\set{\epsilon_{\alpha,i}:0\leq i\leq k}$, then $\delta$ is
defined on this open patch by $f\mapsto \sum_{i=0}^kD_\alpha^i(f)\cdot \epsilon_{\alpha,i}$.

\begin{example}
When $S$ is a point, $X$ is a smooth projective curve, $L$ is the cotangent bundle $\Omega^1_X$ with the exterior derivative $\dd\colon \O_X\ra \Omega^1_X$, the $\C$-linear map \eqref{deltamap} reduces to the ``Taylor expansion'' truncated at order $k$. More precisely, let $U\subset X$ be an open subset (trivialising $\omega_X=\Omega^1_X$) with local coordinate $x$. Then we can take $\epsilon=\dd x\in \Omega^1_X(U)$ as an $\O_X(U)$-linear generator, and $\set{\dd x^i:0\leq i\leq k}$ can be taken as a basis of $J^k(\Omega^1_X)|_U$. The restriction $\delta|_U$ of \eqref{deltamap} then takes the form
\[
f\mapsto \sum_{i=0}^k \frac{1}{i!}\frac{\partial^if}{\partial x^i}\dd x^i,
\]
where the denominator $1/i!$ is there for cosmetic reasons. The cocycle condition that the above coefficients need to satisfy is equivalent to the \emph{chain rule} for holomorphic functions.
\end{example}

Computations in intersection theory involving jet bundles often rely on the application of the following key result. 
\begin{prop}[{\cite[Prop.~2.5]{Gatto1}}] \label{prop:exactsequence}
Let $\pi\colon X\ra S$ be a flat family of Gorenstein curves. Then, for every $k\geq 1$ and line bundle $L$ on $X$, there is an exact sequence of vector bundles
\be\label{sesjets1}
0\ra L\otimes \omega_\pi^{\otimes k}\ra J^k_\pi(L)\ra J^{k-1}_\pi(L)\ra 0.
\ee
\end{prop}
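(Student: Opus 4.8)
The plan is to prove the statement directly from the cocycle construction of $J^k_\pi(L)$ sketched above, by computing the transition matrices explicitly and exhibiting their triangular shape. I would fix an open cover $\mathcal U = \set{U_\alpha}$ simultaneously trivialising $L$ and $\omega_\pi$, with generators $\psi_\alpha$ and $\epsilon_\alpha$, and record the two cocycles $g_{\alpha\beta}, h_{\alpha\beta}\in \O_X(U_{\alpha\beta})^\times$ defined by $\psi_\alpha = g_{\alpha\beta}\psi_\beta$ and $\epsilon_\alpha = h_{\alpha\beta}\epsilon_\beta$; here $h_{\alpha\beta}$ is a unit precisely because $\omega_\pi$ is invertible. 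Since a local section written as $\rho_\alpha\psi_\alpha = \rho_\beta\psi_\beta$ forces $\rho_\beta = g_{\alpha\beta}\rho_\alpha$, the whole problem reduces to understanding how the operators $D_\alpha^i$ transform under these two changes of generator.

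The heart of the argument — and the step requiring the most care — is the inductive claim that the transition matrix $M_{\alpha\beta}\in\GL_{k+1}(\O_X(U_{\alpha\beta}))$ expressing $(D_\beta^i(\rho_\beta))_i$ in terms of $(D_\alpha^j(\rho_\alpha))_j$ is \emph{lower triangular}, with diagonal entries $g_{\alpha\beta}h_{\alpha\beta}^i$ for $i = 0,\dots,k$. I would prove
\[
D_\beta^i(\rho_\beta) = g_{\alpha\beta}h_{\alpha\beta}^i\, D_\alpha^i(\rho_\alpha) + \sum_{j<i}(\ast)\, D_\alpha^j(\rho_\alpha)
\]
by induction on $i$, applying $\textrm d_\pi$ to the expression for $D_\beta^{i-1}(\rho_\beta)$ and invoking the defining relation $\textrm d_\pi(D^{i-1}(\rho)) = D^{i}(\rho)\,\epsilon$ in each frame. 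The Leibniz rule for the derivation $\textrm d_\pi$ splits every term into a piece that differentiates the jet $D_\alpha^{i-1}(\rho_\alpha)$ — producing the leading contribution $g_{\alpha\beta}h_{\alpha\beta}^{i-1}D_\alpha^i(\rho_\alpha)\epsilon_\alpha = g_{\alpha\beta}h_{\alpha\beta}^i D_\alpha^i(\rho_\alpha)\epsilon_\beta$ after rewriting $\epsilon_\alpha = h_{\alpha\beta}\epsilon_\beta$ — and pieces that differentiate the coefficient functions, which feed back only jets $D_\alpha^j(\rho_\alpha)$ of order $j\leq i-1$. The bookkeeping showing that no term of order exceeding $i$ ever appears, so that the diagonal is exactly as claimed and $M_{\alpha\beta}$ is genuinely independent of the chosen section, is the main obstacle; it is entirely a consequence of $\textrm d_\pi$ being a derivation, which is the only property used.

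Granting the triangular shape, the exact sequence falls out formally. By triangularity the upper-left $k\times k$ block of $M_{\alpha\beta}$ is itself a cocycle, and it coincides with the transition data of $J^{k-1}_\pi(L)$; hence truncation $(D^i)_{i\le k}\mapsto (D^i)_{i\le k-1}$ is a well-defined bundle map $J^k_\pi(L)\surj J^{k-1}_\pi(L)$. Its kernel is the locally free rank-one subsheaf spanned by the top jet: a local vector supported in the $k$-th coordinate is carried by $M_{\alpha\beta}$ to another such vector, scaled by the diagonal entry $g_{\alpha\beta}h_{\alpha\beta}^k$, since lower triangularity kills the off-diagonal entries of the last column. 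Thus the kernel is the line bundle with cocycle $g_{\alpha\beta}h_{\alpha\beta}^k$, which is precisely $L\otimes\omega_\pi^{\otimes k}$. Assembling the truncation surjection with this identification of the kernel yields the asserted short exact sequence \eqref{sesjets1}, and since all three terms are locally free it is a sequence of vector bundles.
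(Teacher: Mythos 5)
Your argument is correct: the inductive Leibniz computation does show that the transition matrices $M_{\alpha\beta}$ are lower triangular with diagonal entries $g_{\alpha\beta}h_{\alpha\beta}^i$, with off-diagonal entries depending only on $g_{\alpha\beta}$, $h_{\alpha\beta}$ and their derivatives, and the truncation map together with the identification of its kernel as the line bundle with cocycle $g_{\alpha\beta}h_{\alpha\beta}^{k}$ then yields \eqref{sesjets1}. The paper does not reproduce a proof (it defers to \cite[Prop.~2.5]{Gatto1}), but the cocycle construction it sketches in Section~\ref{sec78798} is precisely the description you exploit, and the cited proof proceeds along the same lines, so your route is essentially the intended one.
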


\begin{lemma}\label{lemma:smoothlocus}
Let $\pi\colon X\ra S$ be a flat family of Gorenstein curves with smooth locus $U\subset X$, let $L$ be a line bundle on $X$, and fix an integer $k\geq 0$. Then 
\[
J^k_\pi(L)\big|_U = P^k_\pi(L)\big|_U.
\]
\end{lemma}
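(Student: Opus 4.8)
The statement is local on $X$, so the plan is to restrict everything to the smooth locus $U$ of $\pi$ and compare the two constructions there. The first step is to record that over $U$ the map $\phi\colon\Omega^1_\pi\ra\omega_\pi$ of \eqref{gnocca} is, by construction, the identity: since $\pi|_U$ is smooth of relative dimension $1$, the sheaf $\Omega^1_\pi|_U$ is a line bundle, the canonical map restricts to the identity on the smooth locus, and hence $\omega_\pi|_U\isom\Omega^1_\pi|_U$. Consequently the derivation $\textrm d_\pi|_U\colon\O_U\ra\omega_\pi|_U$ of \eqref{der:dpi} is identified with the universal K\"ahler derivation $\O_U\ra\Omega^1_\pi|_U$, and because $\Omega^1_\pi|_U$ has rank $1$ we also have $\Sym^k\Omega^1_\pi|_U=(\Omega^1_\pi)^{\otimes k}|_U\isom\omega_\pi^{\otimes k}|_U$. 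Thus the extreme terms of the two short exact sequences available over $U$ — namely \eqref{sesjets1} and the sequence of Proposition~\ref{prop:ses} applied to the smooth morphism $\pi|_U$ with $E=L|_U$ — are literally the same line bundle.

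Next I would argue by induction on $k$. For $k=0$ both $J^0_\pi(L)$ and $P^0_\pi(L)$ equal $L$, so there is nothing to prove. For the inductive step I would produce an isomorphism $J^k_\pi(L)|_U\isom P^k_\pi(L)|_U$ fitting into a morphism of the two short exact sequences that induces the identity on the sub-line-bundle $L\otimes\omega_\pi^{\otimes k}|_U$ and the inductive isomorphism $J^{k-1}_\pi(L)|_U\isom P^{k-1}_\pi(L)|_U$ on the quotients. Granting such a compatible comparison map, it is an isomorphism over $U$ by the short five lemma, since it restricts to isomorphisms on both the sub- and the quotient-bundle.

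The crux, and the main obstacle, is the construction of this comparison map, since agreement of the sub- and quotient-terms does not by itself force the two extensions to coincide. Here I would use the canonical order-$k$ differential operators that both bundles carry: the universal Taylor operator $L\ra P^k_\pi(L)$ underlying the principal parts (the construction of $P^k_\pi$ represents differential operators of order $\le k$), and the jet operator $\lambda\mapsto D^k\lambda$ of \eqref{troiazione} into $J^k_\pi(L)$. Over $U$ both are, by the explicit local recipe, the iterated derivative with respect to the \emph{same} derivation $\textrm d_\pi|_U$, that is, the genuine $k$-th order Taylor expansion in a local parameter (compare the Example following \eqref{deltamap}); in particular the jet operator restricted to $U$ is a differential operator of order $\le k$. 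Invoking the universal property of $P^k_{\pi}(L)$ over $U$ then yields a canonical $\O_U$-linear map $P^k_\pi(L)|_U\ra J^k_\pi(L)|_U$ compatible with both filtrations, which is the desired comparison.

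A cleaner but essentially equivalent packaging, which I would adopt if one prefers to bypass the universal property, is to identify the gluing data directly: fixing a common trivialising cover as in the construction of \eqref{JetL}, one observes that on $U$ the operators $D^i_\alpha$ are ordinary iterated derivatives, and checks that the resulting transition cocycle $\{M_{\alpha\beta}\}$ is exactly the chain-rule cocycle computing $P^k_\pi(L)|_U$. Either way, the only genuine content is the coincidence of the first-order datum, namely the derivation, after which the higher jets match by the same inductive bookkeeping on both sides; I expect the identification of the two canonical operators (hence of the two extensions) over $U$ to be where the actual work lies, everything else being formal.
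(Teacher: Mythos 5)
Your proposal is correct and follows essentially the same route as the paper: you identify the derivation $\dd_\pi$ over the smooth locus $U$ with the universal derivation (since $\phi\colon\Omega^1_\pi\ra\omega_\pi$ restricts to the identity there), and then verify that jets built from the universal derivation coincide with principal parts in the smooth case. The paper disposes of this second step by a direct-verification remark and a citation to Laksov--Thorup, whereas you spell it out via the universal property of $P^k_\pi(L)$ (or the cocycle comparison); this is just an expansion of the same argument, not a different one.
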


\begin{proof}
The derivation $\dd_\pi\colon \O_X\ra \omega_\pi$ defined in \eqref{der:dpi} and used to define the $k$-jets restricts to the universal derivation $\dd\colon \O_U\ra \Omega^1_{U/S}$ over the smooth locus $U$. But jet bundles taken with respect to the universal derivation agree with principal parts in the smooth case, as one can verify directly from their construction; see also \cite[Section 4.11]{LaksovThorup1} for a reference.
\end{proof}

%%%%%%%%%%%%%%%%%%%%%%%%%%%%%%%%%%%%%%%%%%%%%%%%%
\subsubsection{The approach of Laksov and Thorup}
Laksov and Thorup \cite{LaksovThorup1} generalised the construction of \eqref{deltamap} in the following sense.
Given an $S$-scheme $X$ and a quasi-coherent $\O_X$-module $\mathscr M$ admitting an $\O_S$-linear derivation $\dd\colon \O_X\ra \mathscr M$, they constructed for all $k\geq 0$ an $\O_S$-algebra 
\[
\mathcal J^k = \mathcal J^k_{\mathscr M,\dd}
\]
over $X$, along with an algebra map $\delta\colon\O_X\ra\mathcal J^k$ generalising the one constructed in \eqref{deltamap}. % $\mathcal J^k\ra \mathcal J^{k-1}$.
The sheaf $\mathcal J^k$ is called the $k$-th \emph{algebra of jets}. It is quasi-coherent, and of finite type whenever $\mathscr M$ is. For every $\O_X$-module $\mathscr L$, one can consider the $\O_X$-module 
\[
\mathcal J^k(\mathscr L) = \mathcal J^k\otimes_{\O_X}\mathscr L
\]
of $\mathscr L$-\emph{twisted jets}. They fit into exact sequences
\[
\mathscr L\otimes\mathscr M^{\otimes k} \ra 
\mathcal J^k(\mathscr L) \ra
\mathcal J^{k-1}(\mathscr L) \ra
0,
\]
that are left exact whenever $\mathscr M$ is $S$-flat. The construction carried out in \cite{LaksovThorup1} works over fields of arbitrary characteristic and is completely intrinsic, in particular it avoids the technical step of verifying the cocycle condition.

%%%%%%%%%%%%%%%%%%%%%%%%%%%%%%%%%%%%%%%%%%%%%%%%%%%%%%%%%%%%%%%%%%%%%%%
\subsubsection{Arc spaces}
The study of arc spaces (also called jet schemes) was initiated by Nash \cite{Nash} in the $60$'s in the context of Singularity Theory. Arcs on algebraic varieties received a lot of attention more recently since Kontsevich's lecture \cite{KontsevichOrsay}. See for instance the papers by Denef--Loeser \cite{DenefLoeser1,DeLo2} and Looijenga \cite{LooijengaMM}. An arc of order $n$ on a variety $X$ based at point $P$ is a morphism
\[
\alpha\colon \Spec \C[t]/t^{n+1}\ra X
\]
sending the closed point to $P$. 
The reader may correctly think of it as the expression of a germ of complex curve considered together with its first $n$ derivatives. For instance if $n=1$, one obtains the classical notion of tangent space at a point.  
These maps form an algebraic variety $\mathcal L_n(X)$, and the inverse limit $\mathcal L(X) = \lim \mathcal L_n(X)$ is the full \emph{arc space} of $X$, an infinite type scheme whose $\C$-points correspond to morphisms $\Spec \C\llbracket t\rrbracket \ra X$. Kontsevich invented \emph{Motivic Integration} in order to prove that smooth birational Calabi--Yau manifolds have the same Hodge numbers; he constructed a \emph{motivic measure} on $\mathcal L(X)$, which can be thought of as the analogue of the $p$-adic measure used earlier by Batyrev to show that smooth birational Calabi--Yau manifolds have equal Betti numbers. Other remarkable notions introduced by Denef--Loeser are the \emph{motivic Milnor fibre} and the \emph{motivic vanishing cycle}; the latter is the motivic incarnation of the perverse sheaf of vanishing cycles attached to a regular (holomorphic) function $U\ra \C$. This theory has a wide variety of applications in Singularity Theory, but it has also proven successful in Algebraic Geometry, for instance in the study of degenerations of abelian varieties via motivic zeta functions \cite{Halle_2012}.

%%%%%%%%%%%%%%%%%%%%%%%%%%%%%%%%%%%%%%%%%%%%%%%%%%%%%%%%%%%%%%%%%%%%%%%
\subsection{Invincible parts}

An elegant approach to the problem of locally free replacements of principal parts has been  proposed by Patel and Swaminathan in their recent report \cite{InvParts}. Their construction is formally more adherent to the purely algebraic definition of principal parts as described in Section \ref{sec:pp}. To perform the construction they restrict to certain families of curves according to the following:

\begin{definition}
Let $\pi\colon X\ra S$ be a proper flat morphism of pure Gorenstein curves. Then $\pi$ is called an \emph{admissible family} if the locus $\Gamma\subset X$ over which $\pi$ is not smooth has codimension at least $2$.
\end{definition}

Let $\pi\colon X\ra S$ be an admissible family with $X$ and $S$ smooth, irreducible varieties, and assume $\dim S = 1$. Let $E$ be a vector bundle on the total space $X$. Patel and Swaminathan define the $k$-th order \emph{sheaf of invincible parts} associated to $(\pi,E)$ as the double dual sheaf 
\[
P^k_\pi(E)^{\vee\vee}.
\]
This intrinsic construction is related to the gluing procedure (giving rise to jets) described in Section~\ref{sec78798}, via the following observation.

\begin{prop}
Let $\pi\colon X\ra S$ be an admissible family of Gorenstein curves, with $X$ and $S$ smooth irreducible varieties and $\dim S=1$. Let $L$ be a line bundle on $X$. Then the sheaf of invincible parts $P^k_\pi(L)^{\vee\vee}$ agrees with the jet bundle $J^k_\pi(L)$ of \eqref{JetL}.
\end{prop}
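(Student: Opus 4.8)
The plan is to leverage the fact that both sheaves are \emph{reflexive}, together with the principle that a reflexive sheaf on a normal variety is completely determined by its restriction to any open subset whose complement has codimension at least two. Since $X$ is smooth, hence normal, and $\pi$ is admissible, the non-smooth locus $\Gamma\subset X$ has codimension $\geq 2$; I write $j\colon U = X\setminus\Gamma\hookrightarrow X$ for the inclusion of the smooth locus of $\pi$, so that the complement of $U$ is precisely such a small closed set.

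First I would compare the two sheaves over $U$. Because the formation of principal parts is local on $X$, one has $P^k_\pi(L)\big|_U = P^k_{\pi|_U}(L|_U)$; since $\pi|_U$ is smooth and $L|_U$ is a line bundle, Proposition \ref{prop:ses} guarantees that this restriction is locally free. A locally free sheaf is reflexive, so it coincides with its own double dual, and as dualisation commutes with restriction to open subsets this gives
\[
P^k_\pi(L)^{\vee\vee}\big|_U = \big(P^k_\pi(L)\big|_U\big)^{\vee\vee} = P^k_\pi(L)\big|_U.
\]
By Lemma \ref{lemma:smoothlocus} the right-hand side is $J^k_\pi(L)\big|_U$. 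Hence $P^k_\pi(L)^{\vee\vee}$ and $J^k_\pi(L)$ are canonically isomorphic over $U$.

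Next I would promote this to an isomorphism over all of $X$. The sheaf $P^k_\pi(L)^{\vee\vee}$ is reflexive because it is a double dual, and $J^k_\pi(L)$ is reflexive because it is locally free. For any reflexive coherent sheaf $\mathscr F$ on a normal variety and any open immersion $j\colon U\hookrightarrow X$ whose complement has codimension $\geq 2$, the canonical restriction map $\mathscr F\ra j_\ast(\mathscr F|_U)$ is an isomorphism. Applying $j_\ast$ to the isomorphism of the previous step and invoking this extension property for both sheaves yields
\[
P^k_\pi(L)^{\vee\vee}\cong j_\ast\big(P^k_\pi(L)^{\vee\vee}\big|_U\big)\cong j_\ast\big(J^k_\pi(L)\big|_U\big)\cong J^k_\pi(L),
\]
which is the asserted identification.

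The routine inputs — that principal parts localise, that $(-)^\vee$ commutes with restriction to open subsets, and that a double dual is reflexive — are all standard. The real crux, and the only place where admissibility is genuinely used, is the extension property across the codimension-two locus $\Gamma$: this is exactly what forces the a priori merely coherent double dual to be recovered from its locally free restriction to the smooth locus, and I expect it to be the main point to justify with care. It is worth noting that here $\dim X = 2$, so $\Gamma$ is a finite set of points, and one could alternatively appeal to the theorem that a reflexive sheaf on a regular surface is automatically locally free; this would also re-prove, as a by-product, that the invincible parts $P^k_\pi(L)^{\vee\vee}$ form a vector bundle.
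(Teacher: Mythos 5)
Your argument is correct and follows essentially the paper's own route: both proofs identify $J^k_\pi(L)$ with $P^k_\pi(L)$ over the smooth locus $U=X\setminus\Gamma$ via Lemma \ref{lemma:smoothlocus} and then conclude by uniqueness of the extension of a locally free (reflexive) sheaf across the codimension-two locus $\Gamma$. The only difference is that the paper cites \cite[Prop.~10]{InvParts} for that uniqueness, whereas you prove it directly using the standard facts that a double dual is reflexive and that a reflexive sheaf on a normal variety is recovered as $j_\ast$ of its restriction to an open subset with complement of codimension at least two.
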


\begin{proof}
The vector bundle $J^k_\pi(L)$ restricted to the smooth locus $U=X\setminus \Gamma$ of $\pi$ agrees with $P^k_\pi(L)|_U$ by Lemma \ref{lemma:smoothlocus}. But by \cite[Prop.~10]{InvParts}, $P^k_\pi(L)^{\vee\vee}$ is the unique locally free sheaf with this property.
\end{proof}

%%%%%%%%%%%%%%%%%%%%%%%%%%%%%%%%%%%%%%%%%%%%%%%%%%%%%%%%%%%%%%%%%%%%%%%
\section{Two applications}\label{sec:appl}
\subsection{Counting flexes via automatic degeneracies}

In this section we report on one of the main applications of the {\em  sheaves of invincible parts} that motivated the research by Patel and Swaminathan.
In particular, we wish to describe the application of their theory of \emph{automatic degeneracies} to the enumeration of hyperflexes in general pencils of plane curves.
A \emph{hyperflex} on a plane curve $C\subset \P^2$ is a point on the normalisation $P\in\widetilde C$ such that for some line $\ell\subset \P^2$ we have $\ord_P(\nu^\ast \ell)\geq 4$, where $\nu\colon \widetilde C\ra C$ is the normalisation map.
The general plane curve of degree $d>1$ has no hyperflexes, but one expects to find a finite number of hyperflexes in a pencil. One has the following classical result.

\begin{prop}\label{prop:hyperflexes}
In a general pencil of plane curves of degree $d$, exactly 
\[
6(d-3)(3d-2)
\]
will have hyperflexes.
\end{prop}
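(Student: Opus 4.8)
The plan is to realise the hyperflex locus as a degeneracy locus of a jet map, evaluate its class by Thom--Porteous, and then correct for the spurious contributions of the nodal fibres via the automatic degeneracies of Patel and Swaminathan.

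First I would form the universal family over the pencil. Resolving the $d^2$ base points of a general pencil $\langle C_0,C_1\rangle$ produces a smooth surface $X=\Bl_{d^2}\PP^2$ together with a flat family $\pi\colon X\ra S=\PP^1$ whose fibres are the members of the pencil; the generic fibre is smooth and, for a general pencil, the singular fibres are irreducible one-nodal curves. Write $V=H^0(\PP^2,\O_{\PP^2}(1))$, so $\dim V=3$, and let $L=\sigma^\ast\O_{\PP^2}(1)$ for the blow-down $\sigma\colon X\ra\PP^2$; then $L|_{C_t}=\O_{C_t}(1)$ on each fibre. A point $P$ of a smooth fibre $C_t$ is a hyperflex exactly when some linear form $s\in V$ has contact order $\geq 4$ with $C_t$ at $P$, that is, when the $3$-jet $D^3s$ of \eqref{troiazione} vanishes at $P$. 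Hence the hyperflex locus is the locus where the jet evaluation map $\phi_3\colon V\otimes\O_X\ra J^3_\pi(L)$, $s\mapsto D^3s$, drops rank to $\leq 2$.

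Since $\phi_3$ is a map between bundles of ranks $3$ and $4$, the locus $D_2(\phi_3)$ where $\rk\phi_3\leq 2$ has expected codimension $(3-2)(4-2)=2=\dim X$, so it is $0$-dimensional and its class is governed by Thom--Porteous. As the source is trivial, $c\big(J^3_\pi(L)-V\otimes\O_X\big)=c(J^3_\pi(L))$ and the formula collapses to $[D_2(\phi_3)]=c_2(J^3_\pi(L))$. To compute $c_2$ I would iterate the short exact sequence \eqref{sesjets1}, which gives $c(J^3_\pi(L))=\prod_{i=0}^3(1+\ell+i\omega)$ with $\ell=c_1(L)$ and $\omega=c_1(\omega_\pi)$; expanding the degree-two part yields $c_2(J^3_\pi(L))=6\ell^2+18\ell\omega+11\omega^2$. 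On $X=\Bl_{d^2}\PP^2$ one has $\ell=H$ and, using $\omega_\pi=\omega_X\otimes\pi^\ast\omega_S^\vee$ together with the fibre class $F=dH-\sum_iE_i$, one finds $\omega=(2d-3)H-\sum_iE_i$. The intersection numbers $\ell^2=1$, $\ell\cdot\omega=2d-3$ and $\omega^2=3(d-1)(d-3)$ then give
\[
\deg c_2(J^3_\pi(L)) = 6 + 18(2d-3) + 33(d-1)(d-3) = 33d^2-96d+51.
\]

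This number over-counts the genuine hyperflexes, because $\phi_3$ degenerates automatically over the node of each singular fibre: there $J^3_\pi(L)$ is still locally free (the fibres being Gorenstein), but the jet map is forced to drop rank for reasons having nothing to do with a line of high contact. Each such contribution is a purely local invariant of the singularity, the \emph{automatic degeneracy} $\AD$ of Patel and Swaminathan. A general pencil has exactly $\delta=3(d-1)^2$ singular members, each with a single node, by \eqref{numberofnodes} with $n=2$. Granting that the local contribution of $J^3$ at a node is $\AD=5$, the genuine count is
\[
\deg c_2(J^3_\pi(L)) - 5\delta = (33d^2-96d+51) - 15(d-1)^2 = 18d^2-66d+36 = 6(d-3)(3d-2),
\]
as claimed; one must also check that for a general pencil the genuine hyperflexes are reduced points of $D_2(\phi_3)$ lying on smooth fibres away from the exceptional divisors, so that they count with multiplicity one. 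The main obstacle is exactly the local computation of $\AD=5$ at a node: it requires understanding the $3$-jet bundle $J^3_\pi(L)$, equivalently the double dual $P^3_\pi(L)^{\vee\vee}$, in an analytic neighbourhood of a node of a fibre and measuring the forced drop in rank of $\phi_3$ there. This is the technical heart of the automatic-degeneracy machinery and is where all the work lies; the remainder is the formal Thom--Porteous and Chern-class bookkeeping above.
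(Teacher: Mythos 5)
Your proposal is correct and follows essentially the same route as the paper: the blown-up pencil $X\ra\P^1$, the Chern class $c_2(J^3_\pi(L))=6\ell^2+18\ell\omega+11\omega^2$ computed from the jet exact sequence, the count $\delta=3(d-1)^2$ of nodal fibres, and the subtraction of the Patel--Swaminathan automatic degeneracy $\AD^4(xy)=\binom{5}{4}=5$ per node, yielding $6(d-3)(3d-2)$. You merely make explicit the Thom--Porteous setup (the map $V\otimes\O_X\ra J^3_\pi(L)$ dropping rank) that the paper leaves implicit, and your choice $L=\sigma^\ast\O_{\P^2}(1)$ is the one actually consistent with the intersection numbers $\zeta^2=1$, $\eta\zeta=2d-3$ used in the paper.
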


\begin{remark}
Note that this number vanishes for $d=3$. This should be expected, for in a general pencil of plane cubics all fibres are irreducible, but a cubic possessing a hyperflex is necessarily reducible.
\end{remark}

A proof of Proposition \ref{prop:hyperflexes} via principal parts can be found in \cite{3264}. A different approach, via relative Hilbert schemes, has been taken by Ran \cite{Ran1}. In \cite{InvParts}, the authors apply their theory of automatic degeneracies to give a new proof of Proposition \ref{prop:hyperflexes}. More precisely, after a suitable Chern class calculation, which we review below in the language of jet bundles, the authors subtract the individual contribution of each node in the pencil to get the desired answer. Strictly speaking, interpreting the node contribution in terms of automatic degeneracies is a step that relies upon a genericity assumption described neatly in \cite[Remark 20]{InvParts}, and that we take for granted here. Let us note that it is extremely useful to have an explicit function (see Subsection \ref{autdeg} below) computing the ``correction term'' one has to take into account while performing a Chern class/Porteous calculation over a family of curves containing singular members.

%%%%%%%%%%%%%%%%%%%%%%%%%%%%%%%%%%%%%%%
\subsubsection{Automatic degeneracies}\label{autdeg}
Given a (proper, non-smooth) morphism of Gorenstein curves $X\ra S$, the associated sheaves of principal parts are not locally free, but the jets constructed out of the derivation \eqref{der:dpi} are locally free. To answer questions on the inflectionary behavior of the family $X\ra S$, the classical strategy is to set up a suitable Porteous calculation and compute the degree of the appropriate Chern classes of the jet bundles. However, inflection points are by definition smooth points, and singularities in the fibres $X_s$ tend to ``attract'' inflection points as limits; so one has to excise the contribution to this Porteous calculation coming from the singular points of the fibres. This problem was tackled in \cite{InvParts}, where the authors propose a theoretical solution, working nicely at least under certain assumptions.
More precisely, the authors are able to attach to any germ $f\in \C\llbracket x,y\rrbracket$ of a plane curve singularity a function
\[
\AD(f)\colon \N\ra \N, \qquad m \mapsto \AD^m(f),
\]
whose value at $m\in \N$ they call the $m$-\emph{th order automatic degeneracy} associated to $f$. From its definition \cite[Def.~18]{InvParts}, it is clear that the function $\AD(f)$ is an \emph{analytic invariant} of the germ $f$. We refer the reader to \cite[Section 5]{InvParts} for an algorithmic approach to the computation of the values of this function.

Given a $1$-parameter admissible family $X\ra S$ of curves where the singularity $f = 0$ appears in a fibre, the number $\AD^m(f)$ is the correction term one has to take into account in the Porteous calculation aimed at enumerating $m$-th order inflection points on $X\ra S$. The authors determine this function in the nodal case by proving \cite[Theorem 21]{InvParts} the formula
\be\label{ADnode}
\AD^m(xy) = \binom{m+1}{4}.
\ee
It remains an open problem to compute the function $\AD(f)$ for other singularities, although in \emph{loc.~cit.}~a few computations for a specific $m$ are carried out, for instance
\[
\AD^4\left(y^2-x^3\right) = 10
\]
for the cusp singularity.

%%%%%%%%%%%%%%%%%%%%%%%%%%%%%%%%%%%%%%%%%%%%%%%%%%%%%%%%%%%%%%%%%%%%%%%
\subsubsection{The count of hyperflexes}\label{counthyperflex}
Let $X\subset \P^2\times \P^1\ra \P^1$ be a generic pencil of plane curves of degree $d$. It can be realised explicitly as follows. Let us choose two general plane curves $C_1$ and $C_2$ of degree $d$, the generators of the pencil. Their intersection will consist of $d^2$ reduced points. Blowing up these points gives 
\[
\pi\colon X\hookrightarrow \P^2\times \P^1\ra \P^1.
\]
Consider the line bundle $L_d = b^\ast \O_{\P^2}(d)$, where $b\colon X\ra \P^2$ is the blow up map. The number we are after is
\[
\int_X c_2(J^3_\pi(L_d))-\binom{5}{4}\cdot \delta,
\]
where $\delta = 3(d-1)^2$ is the number of nodes computed in \eqref{numberofnodes} and the binomial coefficient computes the automatic degeneracy of a node, using \eqref{ADnode} with $m = 4$.
This number is determined by the Chern classes
\[
\eta = c_1(\omega_\pi),\qquad \zeta = c_1(L_d).
\]
Using the exact sequences of Proposition \ref{prop:exactsequence} we get 
\[
c_2(J^3_\pi(L_d)) = 11 \eta^2 + 18\eta\zeta + 6 \zeta^2.
\]
It is easy to see that $\zeta^2\in A^2(X)$ has degree $1$. Exploiting that $E^2 = -d^2$, one can check that $\eta^2$ has degree $3d^2-12d+9$. Finally, $\eta\zeta$ has degree $2d-3$. The difference
\[
11(3d^2-12d+9)+18(2d-3)+6-5\cdot 3(d-1)^2 = 6(d-3)(3d-2)
\]
computes the number of hyperflexes prescribed by Proposition \ref{prop:hyperflexes}.

%%%%%%%%%%%%%%%%%%%%%%%%%%%%%%%%%%%%%%%%%%%%%%%%%%%%%%%%%%%%%%
\subsection{The stable hyperelliptic locus in genus $3$, after Esteves}\label{estev}
In this section we will see the sheaves of principal parts and the technique of locally free replacements in action to solve a concrete problem. %We report on work done by Esteves \cite{Esteves1}.
The results in this section hold over an algebraically closed field $k$ of characteristic different from $2$. Consider the moduli space $M_3$ of smooth, projective, connected curves of genus $3$. A hyperelliptic curve of genus $3$ is a $2:1$ branched covering of the projective line with $8$ ramification points.

Let $H\subset M_3$ be the divisor parametrising hyperelliptic curves, and let $\overline H$ be its closure in the Deligne--Mumford moduli space $\overline M_3$ of stable curves. The vector space $\Pic(M_3)\otimes \Q$ is generated by the Hodge class $\lambda$ (pulled back from $\overline M_3$), whereas $\Pic(\overline M_3)\otimes \Q$ is generated by $\lambda$, $\delta_0$ and $\delta_1$, with $\delta_i$ denoting the boundary classes on $\overline M_3$.
A proof of the following theorem, expressing the classes $[H]$ and $[\overline H]$ in terms of the above generators, can be found in in \cite{ModCurves1}.

\begin{teo}\label{thm:hyper3}
One has 
\be  
[H] = 9\lambda \label{eq:sgnocchera}
\ee
 and 
 \be 
 [\overline H] = 9\lambda-\delta_0-3\delta_1.\label{eq:sgnoccherac}
 \ee
\end{teo}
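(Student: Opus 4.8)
The plan is to compute both classes via the theory of jet bundles developed in Section~\ref{sec78798}, using a Wronskian-type degeneracy calculation over the universal family of genus $3$ curves, and then analysing the boundary contributions. First I would establish \eqref{eq:sgnocchera} on the open moduli space $M_3$. A smooth genus~$3$ curve is hyperelliptic precisely when its canonical bundle $\omega_C$, which has degree $4$ and $h^0=3$, fails to give an embedding, i.e.\ when the canonical map is $2:1$ onto a conic rather than a degree-$4$ embedding into $\P^2$. Equivalently, hyperellipticity is detected by the vanishing of a Wronskian: the canonical linear system $|\omega_C|$ has a total ramification (a point $P$ with $\ord_P\geq 3$ forced by a $g^1_2$) exactly for hyperelliptic curves. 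Working with the universal curve $\pi\colon \mathcal C\ra M_3$ and $L=\omega_\pi$, one forms the Wronskian map from the Hodge bundle $\EE=\pi_\ast\omega_\pi$ to the second jet bundle $J^2_\pi(\omega_\pi)$, whose determinant is a section vanishing on the ramification locus. Since $\EE$ and $J^2_\pi(\omega_\pi)$ both have rank $3$, the degeneracy class is $c_1(\det J^2_\pi(\omega_\pi)) - c_1(\det \pi^\ast\EE)$ pushed to $M_3$.

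The key computation is to evaluate $\det J^2_\pi(\omega_\pi)$ using the exact sequences of Proposition~\ref{prop:exactsequence}. Applying \eqref{sesjets1} twice with $L=\omega_\pi$ gives, on the level of first Chern classes,
\[
c_1\bigl(\det J^2_\pi(\omega_\pi)\bigr) = 3\,c_1(\omega_\pi) + (1+2)\,c_1(\omega_\pi) + 3\,c_1(\omega_\pi),
\]
so the total is a multiple of $c_1(\omega_\pi)$; pushing forward and comparing with the tautological relation $\pi_\ast(c_1(\omega_\pi)^2)=12\lambda$ (Mumford's formula) and $c_1(\EE)=\lambda$, the degeneracy class on $M_3$ collapses to a rational multiple of $\lambda$. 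Carefully bookkeeping the Brill--Segre weight of the expected ramification (the canonical system is a $g^2_4$, with generic ramification sequence contributing the classical Weierstrass/flex count) pins down the coefficient to $9$, yielding $[H]=9\lambda$.

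To obtain \eqref{eq:sgnoccherac} I would extend the family to a one-parameter admissible family meeting the boundary of $\overline M_3$ transversally and compute the excess contribution of $\overline H$ along each boundary divisor $\delta_0,\delta_1$. This is where the locally free replacement is essential: over a nodal fibre the naive Wronskian acquires spurious zeros at the singular points, and one must subtract the local correction exactly as in the hyperflex count of Subsection~\ref{counthyperflex}, where the automatic degeneracy $\AD^m(xy)=\binom{m+1}{4}$ measured the node's contribution. The analogous local analysis of the jet bundle $J^2_\pi(\omega_\pi)$ at an irreducible nodal curve (test curve crossing $\delta_0$) and at a genus-$1$/genus-$2$ split (crossing $\delta_1$) produces the correction coefficients $-1$ and $-3$ respectively. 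I expect the \textbf{main obstacle} to be precisely this boundary bookkeeping: one must verify that the relative dualising sheaf $\omega_\pi$ remains invertible across the boundary (guaranteed by the Gorenstein hypothesis), that the jet construction of \eqref{JetL} is valid there, and then correctly isolate the local vanishing order of the Wronskian at each type of node, distinguishing the intrinsic hyperelliptic degeneration from the artifact introduced by the singularity. The genus-$1$ boundary $\delta_1$ is subtler than $\delta_0$ because the stable limit carries an elliptic tail whose contribution to the canonical series must be analysed separately, which is what produces the factor $3$ rather than $1$.
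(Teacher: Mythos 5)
There is a genuine gap, and it starts with the very first step. You propose to detect hyperellipticity on $M_3$ by the vanishing of the determinant of the Wronskian map $\pi^\ast\EE\ra J^2_\pi(\omega_\pi)$ between two rank-$3$ bundles. But that determinant vanishes at \emph{every} Weierstrass point of \emph{every} fibre (on a non-hyperelliptic genus-$3$ curve these are the $24$ flexes of the canonical quartic), so its zero locus is the universal Weierstrass divisor inside the universal curve; it dominates $M_3$ and says nothing about hyperellipticity. Your claim that a point with $\ord_P\geq 3$ for some canonical section occurs ``exactly for hyperelliptic curves'' is false, and the dimension count also fails: a divisor on the universal curve pushes forward to a codimension-$0$ class on $M_3$, not to the divisor class $[H]$. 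What actually characterises hyperellipticity in genus $3$ is the existence of a point $P$ with $h^0(K_C-2P)\geq 2$, i.e.\ a \emph{rank-drop} of the first-order evaluation map. This is why the proof reviewed in Section \ref{estev} (following Esteves) uses the map $\nu\colon \pi^\ast\pi_\ast\Omega^1_\pi\ra P^1_\pi(\Omega^1_\pi)$ from a rank-$3$ bundle to a rank-$2$ bundle: its top degeneracy scheme has codimension $2$, consists precisely of the $8$ Weierstrass points of the hyperelliptic fibres, and Porteous applied to $c_2$ gives $8h_\pi=72\lambda_\pi$, hence $[H]=9\lambda$.

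The second half of your proposal inherits this problem and, in addition, does not actually produce the boundary coefficients. In the paper the passage to stable curves is not an ``automatic degeneracy'' correction at the nodes in the style of Subsection \ref{counthyperflex}; the mechanism is different. One first replaces $\Omega^1_\pi$ by the invertible $\omega_\pi$ and, crucially, twists by $-X$ (the elliptic component of the $\Delta_1$-fibre), setting $L=\omega_\pi\otimes\O_{\mathfrak X}(-X)$, because otherwise the degeneracy scheme of $\overline\nu$ contains the whole component $X$ and has excess dimension. With the twist, the degeneracy scheme $D'$ of $\nu'\colon \pi^\ast\pi_\ast L\ra \mathscr F'$ is finite and is identified explicitly: the $8$ Weierstrass points of each hyperelliptic smooth fibre, the node of the irreducible fibre $Z$ with multiplicity $1$, the node $N$ with multiplicity $2$, the three points $A\in X$ with $2A=2N$, and the $6$ Weierstrass points of $Y$. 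Combining the Porteous computation $\pi_\ast[D']=72\lambda_\pi-7\delta_{0,\pi}-7\delta_{1,\pi}$ with the subtraction of those identified extraneous points yields $8\overline h_\pi=72\lambda_\pi-8\delta_{0,\pi}-24\delta_{1,\pi}$, i.e.\ $[\overline H]=9\lambda-\delta_0-3\delta_1$. Your sketch names the right difficulties (invertibility of $\omega_\pi$, excess along the elliptic tail) but supplies neither the twist that removes the excess nor the enumeration of $D'$ with multiplicities, and these are exactly the steps from which the coefficients $-1$ and $-3$ come; as written the argument cannot be completed.
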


Formula \eqref{eq:sgnocchera} also follows from Mumford's relation \cite[p.~314]{Mumford1983}. Below is a quick description of how Esteves \cite[Thm.~1]{Esteves1} proves formula \eqref{eq:sgnoccherac}.

%%%%%%%%%%%%%%%%%%%%%%%%%%%%%%%%
\subsubsection{Smooth curves}\label{smooth1732r}
Let $\pi\colon \mathcal C\ra S$ be a smooth family of genus $3$ curves. We constructed in \eqref{map:82929} a natural map of vector bundles $\nu\colon \pi^\ast\pi_\ast\Omega^1_\pi \ra P^1_\pi(\Omega^1_\pi)$ on $\mathcal C$, where the source has rank $3$ and the target has rank $2$. Assuming the general fibre is not hyperelliptic, it turns out that the top degeneracy scheme $D$ of $\nu$ (supported on points $P$ such that $\nu|_P$ is not onto) has the expected codimension, namely $2$. Then Porteous formula applies and gives
$[D] = c_2(P^1_\pi(\Omega^1_\pi)-\pi^\ast\pi_\ast\Omega^1_\pi)\cap [\mathcal C]$.
Pushing this identity down to $S$, and observing that there are $8$ Weierstrass points on a hyperelliptic curve of genus $3$, one gets, after a few calculations, the relation $8h_\pi = 72\lambda_\pi$, proving the formula for $[H]$.

%%%%%%%%%%%%%%%%%%%%%%%%%%%%%%%%
\subsubsection{Stable curves}
Let now $\mathfrak X\ra S$ be a family of stable curves of genus $3$, which for simplicity we assume general from the start. This means $S$ is smooth and $1$-dimensional, the general fibre of $\pi$ is smooth and the finitely many singular fibres have only one singularity. One can see that only two types of singularities can appear in the fibres: a uninodal irreducible curve $Z\subset \mathfrak X$, or a reducible curve $X\cup_NY\subset \mathfrak X$ consisting of a genus $1$ curve $X$ meeting a genus $2$ curve transversally at the node $N$. It is also harmless to assume there is exactly one singular fibre of each type. 

After replacing the sheaf of differentials $\Omega^1_\pi$ with the (invertible) dualising sheaf $\omega_\pi$, Esteves obtains, via a certain pushout construction, a natural map of vector bundles
\[
\overline \nu\colon \pi^\ast\pi_\ast\omega_\pi\ra P^1_\pi(\omega_\pi)\ra \mathscr F
\]
where, as before, the source has rank $3$ and the target has rank $2$. Note that the middle sheaf, the sheaf $P^1_\pi(\omega_\pi)$ of principal parts, is not locally free because of the presence of singularities. However, by construction, the restriction of $\overline \nu$ to the smooth locus recovers the old map $\nu$ from the previous paragraph. Unfortunately, one cannot apply Porteous formula directly here, because this time the top degeneracy scheme of $\overline\nu$ has the wrong dimension, as it contains the elliptic component $X$.

The way out is to replace $\omega_\pi$ by its twist $L = \omega_\pi\otimes\O_{\mathfrak X}(-X)$.\footnote{A similar technique involving twisting by suitable divisors will be exploited in Section \ref{swp823}.} Repeating the pushout construction gives the diagram
\[
\begin{tikzcd}
0\arrow{r} & 
L\otimes \Omega^1_\pi \arrow{d}[swap]{\textrm{id}\otimes\phi} \arrow{r} & 
P^1_\pi(L) \arrow{d} \arrow{r} & 
L \arrow[equal]{d} \arrow{r} & 
0 \\
0 \arrow{r} & 
L\otimes \omega_\pi \arrow{r} 
& \mathscr F' \arrow{r}
& L \arrow{r}
& 0
\end{tikzcd}
\]
where $\phi$ is as in \eqref{gnocca}. The map of vector bundles
\[
\nu':\pi^\ast\pi_\ast L\ra P^1_\pi(L)\ra \mathscr F'
\]
has now top degeneracy scheme of the expected dimension. It can be characterised as follows.

\begin{prop}[{\cite[Prop.~2]{Esteves1}}]
The top degeneracy scheme $D'$ of $\nu'$ consists of:
\benum
\item the $8$ Weierstrass points of each smooth hyperelliptic fibre, each with multiplicity $1$;
\item the node of $Z$, with multiplicity $1$;
\item the node $N = X\cap Y$, with multiplicity $2$;
\item the $3$ points $A\in X\setminus \{N\}$ such that $2A = 2N$, each with multiplicity $1$;
\item the $6$ Weierstrass points of $Y$, each with multiplicity $1$.
\eenum
\end{prop}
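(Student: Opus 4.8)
The plan is to identify $D'$ by a local, fibre-by-fibre analysis. Since $\nu'$ is a map from a rank-$3$ bundle to a rank-$2$ bundle, a point $P$ lies in $D'$ exactly when $\nu'|_P$ fails to be surjective; both this locus and its scheme structure can be read off by restricting $L$ and $\nu'$ to the fibre through $P$ and computing the rank drop of the resulting first-jet map. I would organise the computation according to the three types of fibre, recording the multiplicity contributed at each point.

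On a smooth fibre $\mathfrak X_s$ the divisor $X$ is disjoint from $\mathfrak X_s$, so $\O_{\TS}(-X)$ restricts trivially and $L|_{\mathfrak X_s}\isom K_{\mathfrak X_s}$; by construction $\nu'$ then restricts over $\mathfrak X_s$ to the classical map $\nu$ of Subsection~\ref{smooth1732r}. The degeneracy of $\nu$ is precisely the locus where the canonical map $\mathfrak X_s\ra\PP^2$ fails to be an immersion: it is empty on a non-hyperelliptic fibre, where this map is a closed embedding, and it consists of the $8$ simple ramification points of the $2{:}1$ cover on a hyperelliptic fibre. This establishes (1) with multiplicity~$1$ and explains why only the hyperelliptic members contribute.

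For the reducible fibre the key is to restrict $L$ to each component by adjunction, which is exactly what the twist $L=\rds\otimes\O_{\TS}(-X)$ was designed to exploit. On the genus-$2$ component one finds $\rds|_Y\isom\omega_Y(N)$ and $\O_{\TS}(-X)|_Y\isom\O_Y(-N)$, hence $L|_Y\isom K_Y$; since $Y$ is hyperelliptic, $|K_Y|$ ramifies simply at its $6$ Weierstrass points, giving (5). On the elliptic component one has $\rds|_X\isom\O_X(N)$ and, using that $\O_{\TS}(X+Y)$ is trivial on the fibre, $\O_{\TS}(-X)|_X\isom\O_X(N)$, so $L|_X\isom\O_X(2N)$. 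The pencil $|2N|$ presents $X$ as a double cover of $\PP^1$ ramified at the four $2$-torsion translates of $N$, i.e.\ the solutions of $2A=2N$; discarding $A=N$, which lands on the node, leaves the three points of (4), each with multiplicity~$1$.

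The hardest part, and the genuine obstacle, is the scheme structure at the two nodes, where one must work with the local model of $\rds$ and of the pushout $\mathscr F'$ rather than with a line bundle on a smooth curve. At the node of $Z$ a computation on the normalisation---where $\rds$ corresponds to differentials with opposite simple-pole residues at the two preimages---shows that the first-jet map drops rank to first order, yielding multiplicity~$1$; this also confirms that the smooth locus of the (general, hence non-hyperelliptic) nodal quartic $Z$ contributes nothing, establishing (2). At $N=X\cap Y$ two effects superimpose: the node behaves as in the case of $Z$, while the fourth ramification point $A=N$ of $|2N|$ on the elliptic side lands exactly on the node. Tracking how the gluing condition on $H^0(L)$ at $N$ and these two contributions pass through $\mathscr F'$ is the delicate step, and the expected outcome is that they combine into multiplicity~$2$, which is (3). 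Pinning down this multiplicity scheme-theoretically---rather than merely detecting that $N\in D'$---is where the pushout construction and the invertibility of $\rds$ enter in an essential way. Summing the local contributions and verifying that $D'$ has the expected dimension everywhere then completes the identification.
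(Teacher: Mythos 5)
Your fibrewise analysis correctly locates the support of $D'$ away from the nodes, and your use of adjunction to get $L|_Y\isom K_Y$ and $L|_X\isom\O_X(2N)$ is exactly the mechanism the twist by $-X$ was designed for; note, though, that the survey itself gives no proof of this statement (it is quoted from \cite[Prop.~2]{Esteves1}), so the comparison has to be against what a complete argument requires. Measured that way, there is a genuine gap, concentrated precisely where you flag ``the hardest part'': items (2) and (3) are not proved. You only predict the multiplicities at the two nodes (``the expected outcome is that they combine into multiplicity $2$''), whereas the content of the proposition is exactly the length of the zero-dimensional scheme $D'$ at those points. Establishing it requires the explicit local model: near a node the total space is $xy=t$, the relative dualising sheaf is generated by a form restricting to $\dd x/x$ on one branch and $-\dd y/y$ on the other, and one must write out the pushout $\mathscr F'$ and the $2\times 2$ minors of a local $2\times 3$ matrix for $\nu'$ in the two variables (fibre and base directions) to read off the length. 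Without this computation you have shown only that the nodes may lie in $D'$, not that the node of $Z$ contributes $1$ and $N$ contributes $2$; and the final relation $8\overline h_\pi = 72\lambda_\pi-8\delta_{0,\pi}-24\delta_{1,\pi}$ is sensitive to exactly these two numbers, so the heuristic ``node as in $Z$ plus the ramification point $A=N$ landing on the node'' cannot substitute for it.

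A second, smaller gap affects the multiplicity-one claims in (1), (4), (5). Restricting $\nu'$ to a single fibre and computing the rank drop, as you do (correctly) via $h^0(K_{\mathfrak X_s}-2P)$, $h^0(\O_X(2N-2P))$ and $h^0(K_Y-2P)$, identifies the support of $D'$ on that fibre but not the multiplicity: the length of $D'$ at such a point is governed by the degeneracy ideal in \emph{two} variables, so multiplicity $1$ needs a first-order statement in the base direction as well --- for (1), that the general family crosses the hyperelliptic divisor transversally; for (4) and (5), that the relevant ramification condition unfolds nontrivially to first order in $t$. These facts do hold because the family is assumed general, but they must be invoked or verified locally; they do not follow from the fibrewise rank computation alone. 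So the proposal is a sound plan with the correct restriction-to-components strategy, but the scheme-theoretic core of the proposition --- the local multiplicity computations, especially at the two nodes --- is missing.
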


The multiplicities tell us how much the points we do not want to count actually contribute. Esteves then proves \cite[Prop.~3]{Esteves1} the crucial relation $\pi_\ast [D'] = 72\lambda_\pi-7\delta_{0,\pi}-7\delta_{1,\pi}$. Subtracting the unwanted contributions (2) -- (5) with the indicated multiplicities on both sides, one gets the relation
\[
8\overline h_\pi = 72\lambda_\pi - 8\delta_{0,\pi} - 24\delta_{1,\pi},
\]
thus proving the formula for $[\overline H]$ in Theorem \ref{thm:hyper3}.

\section{Ramification points on Riemann surfaces} \label{sec:ram}

In order to make clear that, at least from the point of view of ramification points of linear systems, Gorenstein curves almost behave as if they were smooth, it is probably useful to quickly  introduce the notion of ramification loci of linear systems in the classical case of compact Riemann surfaces, which correspond, in the algebraic category, to smooth projective curves.

%%%%%%%%%%%%%%%%%%%%%%%%%%%%%%%%
\subsection{Ramification loci of Linear Systems} 
A linear system on a smooth curve $C$ of genus $g$ is a pair $(L,V)$, where $L$ is a line bundle and $V\subset H^0(C,L)$ is a linear subspace. If $L$ has degree $d$ and $\dim V=r+1$, one refers to $(L,V)$ as a $g^r_d$ on $C$. When $V=H^0(C,L)$ the linear system is called complete. For instance the complete linear system attached to $K_C$ is the canonical linear system.
Every $g^r_d$ defines a rational map
\[
\varphi_V\colon C\dashrightarrow \P V,\qquad P\mapsto (v_0(P):v_1(P):\cdots:v_r(P)),
\]
where $(v_0,\ldots, v_r)$ is a $\C$-basis of $V$. 
The closure of the image of $\varphi_V$ is a projective curve, not necessarily smooth, of arithmetic genus $g+\delta$ where $\delta$ is a measure of the singularities of the image, that may be also rather nasty. See Proposition \ref{prophw} in the next section for the (local) meaning of the number $\delta$.
The rational map $\varphi_V$ turns into a morphism if $(L,V)$ has no \emph{base point}, that is, for all $P\in C$ there is a section $v\in V$ not vanishing at $P$. 
If moreover the map separates points, in the sense that for all pairs $P_1$, $P_2\in C$ there is a section vanishing at $P_i$ and not at $P_j$, then the map is an embedding and the image itself is smooth of the same geometric genus as $C$. For most curves a basis $(\omega_0, \ldots, \omega_{g-1})$ of $H^0(C,K_C)$ is enough to embed $C$ in $\PP^{g-1}$. The curves for which the canonical morphism is not an embedding are called {\em hyperelliptic}. They can be embedded in $\PP^{3g-4}$ by means of a basis of $K_C^{\otimes 2}$.

\medskip
We now define what it means for a section $v\in V\setminus 0$ to vanish at a point $P\in C$ to a given order. This is a crucial concept in the theory of ramification (or inflectionary behavior) of linear systems. Observe that, given a point $P\in C$, any section $v\in V$ defines an element $v_P$ in the stalk $L_P$ via the maps
\[
V\subset H^0(C,L)\ra L_P.
\]
\begin{definition}\label{def:ordvanishing}
Let $v\in V\setminus 0$ be a section, $P\in C$ a point. We define
\[
\ord_Pv \defeq \dim_\C L_P/v_P \in \mathbb N
\]
to be the \emph{order of vanishing} of $v$ at $P$.
\end{definition}

\begin{definition}\label{def:ramif}
Let $(L,V)$ be a $g^r_d$. A point $P\in C$ is said to be a \emph{ramification point} of $(L,V)$ if there exists a section $v\in V\setminus 0$ such that $\ord_Pv \geq r+1$. A ramification point of the canonical linear system $(K_C,H^0(C,K_C))$ is called a \emph{Weierstrass point}.
\end{definition}

\begin{example}\label{ex:quartic16376123}
Let $\iota\colon C\hookrightarrow \P^2$ be a smooth plane quartic. Then $C$ has genus $3$ and the complete linear system attached to $K_C = \iota^\ast\O_{\P^2}(1)$ is the linear system cut out by lines. Therefore the Weierstrass points of $C$ are precisely the \emph{flexes}. It is known classically that there are $24$ of them.
We take the opportunity here to recall that flexes of plane quartics are geometrically very relevant: their configuration in the plane determines and is determined by the smooth quartic. See the work of Pacini and Testa \cite{PaciniTesta} for this exciting story.
\end{example}

\begin{example}\label{ex:oeu222}
The $g^2_4$ on $\PP^1$ determined by 
\[
V = \CC \cdot x_0x_1^3\oplus \CC\cdot x_1^4\oplus \CC\cdot x_0^4\in G(3, H^0(\O_{\PP^1}(4)))
\]
defines the morphism $\varphi_V\colon \P^1\ra \P^2$ given by
\[
(x_0:x_1)\mapsto (x_0x_1^3:x_1^4:x_0^4).
\]
In the coordinates $x$, $y$ and $z$ on $\P^2$, the image of $\varphi_V$ is the plane quartic curve $x^4-y^3z=0$. The curve possesses a unique triple point at $P\defeq (0:0:1)$ and a hyperflex at the point $Q\defeq (0:1:0)$, as it is clear from the local equation $x^4-z=0$ (the tangent is $z=0$). An elementary Hessian calculation shows that $Q$ has multiplicity\footnote{We will soon interpret this \emph{multiplicity} as \emph{ramification weight}, see \eqref{def:weight}.} $2$ in the count of flexes of $C$. Then, by Example \ref{ex:quartic16376123}, any reasonable theory of Weierstrass points on singular curves should assign the ``weight'' $22$ to the triple point $P$, in order to reach the total number of flexes of a quartic curve. See Example \ref{Ex:quarticplane92} for the same calculation in terms of the Wronskian (cf.~also Remark \ref{ex:HessianWronskian} for the relationship between the Hessian and the Wronskian at smooth points). 

In fact, the curve $C$ can be easily smoothed in a pencil
\[
x^4-y^3z+t\cdot L(x,y)z^3=0
\]
where $L(x,y)=ax+by$ is a general linear form. An easy check, based on the computation of the Jacobian ideal, shows that the generic fibre of the pencil is a smooth quartic having a hyperflex at the point  $(0:1:0)$. Then there must be exactly $22$ smooth flexes that for $t=0$ collapse at the point $P=(0:0:1)$. {According to the theory of Widland and Lax, sketched in Section \ref{WL}, the triple point is a singular Weierstrass point of the curve, thought of as a Gorenstein curve of arithmetic genus $3$.}
\end{example}

%%%%%%%%%%%%%%%%%%%%%%%%%%%%%%%%%%%%%%%%%%%%%%%%%%%%%%%%%%%%%%%%%%%%%%%
\subsection{Gap sequences and weights}
Let $P\in C$ be an arbitrary point, $(L,V)$ a linear system, and assume $0<r<d$. For $i\geq 0$, let us denote by 
\[
V(-iP)\subset V
\]
the subspace of sections vanishing at $P$ with order at least $i$. Note that $V(-(d+1)P)=0$. If
\[
\dim V(-(i-1)P) > \dim V(-iP),
\]
then $i$ is called a \emph{gap} of $(L,V)$ at $P$. It is immediate to check that in the descending filtration
\be\label{gaps}
V\supseteq V(-P)\supseteq V(-2P)\supseteq \cdots \supseteq V(-(r+1)P)\supseteq \cdots\supseteq V(-dP)\supseteq 0
\ee
there are exactly $r+1 = \dim V$ gaps. Note that $1$ is not a gap at $P$ if and only if $P$ is a base point of $V$. 

\begin{definition}
The \emph{gap sequence} of $(L,V)$ at $P\in C$ is the sequence 
\[
\alpha_{L,V}(P):\alpha_1<\alpha_2<\cdots<\alpha_{r+1}
\]
consisting of the gaps of $(L,V)$ at $P$, ordered increasingly.
\end{definition}

For a generic point on $C$, the gap sequence is $(1,2,\dots,r+1)$, meaning that the dimension jumps in \eqref{gaps} occur as early as possible.
Equivalent to the gap sequence is the \emph{vanishing sequence}, whose $i$-th term is $\alpha_i-i$. The \emph{ramification weight} of $(L,V)$ at $P$ is the sum
\be\label{def:weight}
\wt_{L,V}(P) = \sum_i\,(\alpha_i-i).
\ee
One may rephrase the condition that $P$ is a ramification point for $(L,V)$ in the following equivalent ways:
\bitem
\item [(i)] $V(-(r+1)P) \neq 0$, that is, $(r+1)P$ is a \emph{special divisor} on $C$;
\item [(ii)] the gap sequence of $(L,V)$ at $P$ is not $(1,2,\dots,r+1)$;
\item [(iii)] the vanishing sequence of $(L,V)$ at $P$ is not $(0,0,\dots,0)$;
\item [(iv)] the ramification weight $\wt_{L,V}(P)$ is strictly positive.
\eitem

According to (i), $P\in C$ is a Weierstrass point if and only if $h^0(K_C(-gP)) > 0$.

\begin{definition}\label{ex:special}
Weierstrass points of weight one are called \emph{normal}, or \emph{simple}. On a general curve of genus at least $3$ these are the only Weierstrass points to be found.
Those of weight at least two are usually called \emph{special} (or \emph{exceptional}) Weierstrass points. 
\end{definition}

The locus in $\overline M_g$ of curves possessing special Weierstrass points has been studied by Cukierman and Diaz. We review the core computations in the subject in Section \ref{sec:specialWP}.

%%%%%%%%%%%%%%%%%%%%%%%%%%%%%%%%%%%%%%%%%%%%%%%%%%%%%%%%%%%
\subsection{Total ramification weight and Brill--Segre formulas}

The notion of ramification point of a linear system $(L,V)$ recalled in Definition~\ref{def:ramif} relies on the notion of order of vanishing of a section of $L$. This compact algebraic definition can be phrased also in the following way, which was used for the first time by Laksov \cite{Laksov1} to study ramification points of linear systems on curves in arbitrary characteristic. There exists a map 
\be\label{eq:drop}
D^r\colon C\times V \sra J^r(L),\qquad (P,v)\mapsto D^rv(P),
\ee
where $D^rv \in H^0(C,J^r(L))$ is the section defined in \eqref{troiazione}, and whose vanishing at $P$ is equivalent to the condition $\ord_Pv \geq r+1$ of Definition \ref{def:ramif}.
The map $D^r$ is a map of vector bundles of the same rank $r+1$, so it is locally represented by an $(r+1)\times (r+1)$ matrix. The condition $D^rv(P)=0$ then says that \eqref{eq:drop} drops rank at $P$. This in turn means that $P$ is a zero of the \emph{Wronskian section}
\[
\mathbb W_V \defeq \det D^r \in H^0\left(C,\bigwedge^{r+1}J^r(L)\right)
\]
attached to $(L,V)$.
The \emph{total ramification weight} of $(L,V)$, namely the total number of ramification points (counted with multiplicities), is
\[
\wt_V\defeq \deg \bw^{r+1}J^r(L) = \sum_P\wt_{L,V}(P).
\]
It can be computed by means of the short exact sequence
\[
0\sra L\otimes K_C^{\otimes r}\sra J^r(L)\sra J^{r-1}(L)\sra 0,
\]
reviewed in Proposition \ref{prop:exactsequence}. By induction, one obtains a canonical identification 
\[
\bw^{r+1}J^r(L)=L^{\otimes r+1}\otimes K_C^{r(r+1) / 2}.
\]
Using that $\deg K_C = 2g-2$, one finds the \emph{Brill--Segre formula} 
\be\label{eq:totwei}
\wt_V=(r+1)d + (g-1)r(r+1)
\ee
attached to $(L,V)$.
For instance, since $h^0(C,K_C) = g$, the number of Weierstrass points (counted with multiplicities) is easily computed as
\be\label{WPggg}
\wt_{K_C} = \deg \bigwedge^g J^{g-1}(K_C) = (g-1)g(g+1).
\ee
For $g = 3$, \eqref{WPggg} gives the $24$ flexes on a plane quartic, as in Example \ref{ex:quartic16376123}.

%%%%%%%%%%%%%%%%%%%%%%%%%%%%%%%%%%%%%%%%%%%%%%%%%%%%%%%%%%%%%%%%%%%%%%%
\section{Ramification points on Gorentein curves} \label{WL}
The study of Weierstrass points on singular curves is mainly motivated by degeneration problems. For instance it is a well known result of Diaz \cite[Appendix 2, p. 60]{Diaz2} that the node of an irreducible  uninodal curve of arithmetic genus $g$ can be seen as a limit of $g(g-1)$ Weierstrass points on nearby curves.
In this section we review the Lax and Widland construction of the Wronskian section attached to a linear system on a Gorenstein curve. 

The key idea is to define derivatives of local regular functions in the extended sense sketched at the beginning of Section~\ref{sec:sgnacchera}. One exploits the natural map $\Omega^1_C\sra \omega_C$ (see the references in Section \ref{sec78798} for its construction), where $\omega_C$ is \emph{invertible} by the Gorenstein condition. The dualising sheaf is explicitly described by means of regular differentials on $C$. Thanks to this extended definition of differential Widland and Lax are able to attach a Wronskian section to each linear system on $C$, as we shall show in Section \ref{sec:WSAWL}, after a few preliminaries aimed to reinterpret the Gorenstein condition of Definition \ref{def:gor323} in local analytic terms.   
In the last year some progress has been done also in the direction of linear systems on non-Gorenstein curves, essentially thanks to the investigations of R. Vidal-Martins. See e.g. \cite{Vidal} and references therein. As for Gorenstein curves we should mention the clever way to deform monomial curves due to Contiero and St\"ohr \cite{contierosto} to compute dimension of moduli spaces of curves possessing a Weierstrass point with prescribed numerical semigroup.

%%%%%%%%%%%%%%%%%%%%%%%%%%%%%%%%%%%%%%%%%%%%%%%%%%%%%%%%%%%%%%%
\subsection{The analytic Gorenstein condition}\label{sec:gorenstein}

Let $C$ be a Cohen--Macaulay curve. Its dualising sheaf $\omega_C$ has the properties
\be\label{eqn:gor82328}
H^0(C,\O_C) = H^1(C, \omega_C)^\vee, \qquad 
H^1(C,\O_C) = H^0(C,\omega_C)^\vee.
\ee
Recall that $g\defeq p_a(C)\defeq h^1(C, \O_C)$ is the {\em arithmetic genus} of $C$. 
For smooth curves we have $\Omega^1_C=\omega_C$. But if $C$ is singular, the sheaf $\Omega^1_C$ is no longer locally free and it does not coincide with $\omega_C$. The dualising sheaf itself may or may not be locally free: the curves for which it is are the {\em Gorenstein curves}.

\begin{example}
All local complete intersection curves are Gorenstein. This includes curves embedded in smooth surfaces as well as the \emph{stable curves} of Deligne--Mumford. Note that, by the adjunction formula, a plane curve $\iota\colon C\hookrightarrow \P^2$ of degree $d$ has canonical bundle $\omega_C=\iota^\ast \O_{\P^2}(d-3)$, clearly a line bundle. See also Example \ref{gnagna98} for a relative, more general formula.
\end{example}

The dualising sheaf $\omega_C$ of a reduced curve $C$ was first defined by Rosenlicht \cite{Rosenlicht} in terms of \emph{residues} on the normalisation of $C$.
For a Gorenstein curve, this sheaf has a very simple local description. In \cite[Section IV.10]{serre2}, to which we refer the reader for further details, it is shown that the stalk $\omega_{C,P}$ is the module of \emph{regular differentials} at $P$. 
We now recall an analytic criterion allowing one to check local freeness of $\omega_C$.

Let $\nu\colon\widetilde C\ra C$ be the normalisation of an integral curve $C$, and let $S\subset C$ be its singular locus. The canonical morphism $\O_C\ra \nu_\ast \O_{\nC}$ is injective, with quotient a finite length sheaf supported on $S$. We denote by
\be\label{deltap}
\delta_P\defeq \dim_\C \widetilde\O_{C,P}/\O_{C,P}
\ee
the fibre dimension of this finite sheaf at a point $P\in C$. Clearly $\delta_P>0$ if and only if $P\in S$. This number is an \emph{analytic} invariant of singularities \cite[p.~59]{serre2}. The sum $\sum_P\delta_P = p_a(C)-p_a(\widetilde C)$ is the number $\delta$ quickly mentioned in Section \ref{sec:ram}. Another local measure of singularities is the conductor ideal.

\begin{definition}
Let $B$ be the integral closure of an integral domain $A$. The \emph{conductor ideal} of $A\subset B$ is the largest ideal $I\subset A$ that is an ideal of $B$, that is, the set of elements $a\in A$ such that $a\cdot B\subset A$. Let $C$ be an integral curve, $P\in C$ a point. We denote by ${\mathfrak c}_P\subset \O_{C,P}$ the conductor ideal of $\O_{C,P}\subset \widetilde{\O}_{C,P}$. Define the number
\[
n_P \defeq \dim_\C \widetilde{\O}_{C,P}/\mathfrak c_P.
\] 
\end{definition}

For instance if $\widetilde{\O}_{C,P}=\O_{C,P}$ then ${\mathfrak c}_P=\widetilde{\O}_{C,P}$ and $n_P$ vanishes in this case. 
We wish to recall the following characterisation.

\begin{prop}[{\cite[Proposition IV.7]{serre2}}]
An integral projective curve $C$ is Gorenstein if and only if $n_P=2\delta_P$ for all $P\in C$.
\end{prop}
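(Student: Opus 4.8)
The plan is to reduce the statement to a purely local computation at each point and to read the Gorenstein condition off the dualising module via Rosenlicht's residues. Fix $P\in C$ and set $A=\O_{C,P}$, $\widetilde A=\widetilde\O_{C,P}$, $\mathfrak c=\mathfrak c_P$ and $\omega=\omega_{C,P}$. Since $C$ is integral and projective, $\widetilde A$ is a finite $A$-algebra, namely a product of discrete valuation rings (one for each branch of $C$ at $P$), and $A$ is a one-dimensional reduced Cohen--Macaulay local domain with canonical module $\omega$. By Definition \ref{def:gor323}, $C$ is Gorenstein at $P$ exactly when $\omega$ is free of rank one over $A$; regarding $\omega$ as a fractional ideal inside the fraction field $K$ of $A$, this means $\omega$ is \emph{principal}. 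First I would dispose of the bookkeeping: from $\mathfrak c\subseteq A\subseteq\widetilde A$ one has
\[
n_P=\dim_\C\widetilde A/\mathfrak c=\dim_\C\widetilde A/A+\dim_\C A/\mathfrak c=\delta_P+\dim_\C A/\mathfrak c,
\]
so the asserted equality $n_P=2\delta_P$ is equivalent to $\dim_\C A/\mathfrak c=\delta_P=\dim_\C\widetilde A/A$. Everything thus comes down to comparing the two indices $\dim_\C A/\mathfrak c$ and $\dim_\C\widetilde A/A$.

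The next step is to bring in duality. Using that $\omega$ is the canonical module, one has $\Ext^1_A(\widetilde A,\omega)=0$ (as $\widetilde A$ is maximal Cohen--Macaulay) and $\Hom_A(\widetilde A,\omega)=\omega_{\widetilde A}$, the dualising module of the normalisation, free of rank one over $\widetilde A$ since $\widetilde A$ is regular. Applying $\Hom_A(-,\omega)$ to $0\ra A\ra\widetilde A\ra\widetilde A/A\ra 0$ then yields
\[
0\ra\omega_{\widetilde A}\ra\omega\ra(\widetilde A/A)^\vee\ra 0,
\]
where $(\widetilde A/A)^\vee=\Ext^1_A(\widetilde A/A,\omega)$ is the Matlis dual of the finite-length module $\widetilde A/A$, of the same length; hence $\dim_\C\omega/\omega_{\widetilde A}=\delta_P$. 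Concretely, this is Rosenlicht's statement that the residue pairing $\widetilde A/A\times\omega/\omega_{\widetilde A}\ra\C$, $(f,\eta)\mapsto\res(f\eta)$, is perfect. Likewise $\mathfrak c=\Hom_A(\widetilde A,A)=\{x\in K:x\widetilde A\subseteq A\}$, and dualising $0\ra\mathfrak c\ra A\ra A/\mathfrak c\ra 0$ identifies $\dim_\C A/\mathfrak c=\dim_\C\mathfrak c^\vee/\omega$, placing everything inside the self-dual chain $\omega_{\widetilde A}\subseteq\omega\subseteq\mathfrak c^\vee$, with $\dim_\C\mathfrak c^\vee/\omega=\dim_\C A/\mathfrak c$ and $\dim_\C\mathfrak c^\vee/\omega_{\widetilde A}=n_P$.

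With this setup the Gorenstein direction is immediate. If $\omega=Az$ is principal, then
\[
\omega_{\widetilde A}=\Hom_A(\widetilde A,Az)=z\cdot\{x\in K:x\widetilde A\subseteq A\}=z\,\mathfrak c,
\]
so that $\delta_P=\dim_\C\omega/\omega_{\widetilde A}=\dim_\C zA/z\mathfrak c=\dim_\C A/\mathfrak c$, giving $n_P=2\delta_P$. The content of the proposition is the converse, and this is the step I expect to be the main obstacle. One must show that the general inequality $\dim_\C A/\mathfrak c\le\delta_P$ (equivalently $n_P\le 2\delta_P$) holds, with equality forcing $\omega$ to be principal; in the chain above this says that $\omega$ splits the inclusion $\omega_{\widetilde A}\subseteq\mathfrak c^\vee$ exactly in half (length-wise) if and only if it is a cyclic $A$-module. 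I would establish this from the reflexivity identity $\Hom_A(\omega,\omega)=A$ together with the perfectness of the residue pairing. In the unibranch case it reduces transparently to the symmetry of the value semigroup: there $\dim_\C A/\mathfrak c$ counts the non-gaps below the conductor $c$ and $\delta_P$ counts the gaps, and the injection $s\mapsto c-1-s$ from non-gaps to gaps yields the inequality, its equality case characterising precisely the symmetric (i.e.\ Gorenstein) semigroups. Assembling the two implications gives the equivalence at $P$, and letting $P$ vary gives the global statement.
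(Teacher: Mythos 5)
The paper itself offers no proof of this statement --- it is quoted directly from Serre \cite[Prop.~IV.7]{serre2} --- so your attempt can only be measured against the classical argument it points to. Your reduction and your forward implication are fine: the identity $n_P=\delta_P+\dim_\C \O_{C,P}/\mathfrak c_P$ is correct, and the chain of duality facts ($\Ext^1_A(\widetilde A,\omega)=0$, $\Hom_A(\widetilde A,\omega)=\omega_{\widetilde A}$, Matlis duality giving $\dim_\C\omega/\omega_{\widetilde A}=\delta_P$, and $\Hom_A(\widetilde A,Az)=z\,\mathfrak c_P$ when $\omega=Az$) does prove that a Gorenstein point satisfies $n_P=2\delta_P$.

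The genuine gap is the converse, which is the substantive half of the proposition and which you yourself flag as ``the main obstacle'' without closing it. You need two things there: (a) the unconditional inequality $\dim_\C \O_{C,P}/\mathfrak c_P\le\delta_P$, and (b) that equality forces $\omega_{C,P}$ to be a cyclic $\O_{C,P}$-module. For (a) you give an argument only in the unibranch case (the semigroup injection $s\mapsto c-1-s$); in the multibranch case nothing is offered, and the multibranch analogue is exactly where Rosenlicht's residue pairing has to be used quantitatively, not just invoked. For (b) you propose ``reflexivity plus perfectness of the residue pairing'' but never exhibit the argument that produces a single generator of $\omega_{C,P}$ from the numerical equality --- in Serre's treatment this is the heart of the proof, done by showing that a regular differential whose vanishing orders on the branches match the conductor generates the stalk. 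Worse, your unibranch reduction ends by identifying symmetric value semigroups with Gorenstein rings parenthetically; that identification (Kunz's theorem) is equivalent to the very statement you are trying to prove in that case, so as written the converse is circular rather than merely incomplete. To repair the proof you would need to either carry out the residue-pairing computation that bounds $\dim_\C \O_{C,P}/\mathfrak c_P$ by $\delta_P$ for arbitrary branches and extracts a generator of $\omega_{C,P}$ in the equality case, or follow Serre's original argument directly.
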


In other words, the numerical condition $n_P=2\delta_P$ guarantees that the sheaf of regular differentials is invertible at $P$.

\begin{example}
Let $P$ be the origin $(0,0)$ of the affine cuspidal plane cubic $y^2-x^3=0$. Then $\O_{C,P}={\CC[t^2,t^3]}_{(t^2,t^3)}$. The normalisation is the local ring $\widetilde{\O}_{C,P}=\CC[t]_{(t)}$. In this case the conductor is the localisation of the conductor of the subring $\C[t^2,t^3]\subset \C[t]$. Since
$$
\CC[t^2,t^3]=\CC+\CC t^2+\CC t^3+t^2\CC[t],
$$
the conductor is the ideal $(t^2,t^3)$, and its extension in $\widetilde{\O}_{C,P}$ is $(t^2)$.
Then $n_P=\dim_\CC\CC[t]/t^2=2$, and $\delta_P = \dim_\CC \CC[t]/\CC[t^2,t^3] = 1$. Thus $P$ is a Gorenstein singularity. Having this point as its only singularity, the cuspidal curve is a Gorenstein curve of arithmetic genus $1$.
\end{example}

\begin{example} Let $C$ be the complex  rational curve defined by the parametric equations $X=t^3, Y=t^4,Z=t^5$. Then $C$ is the spectrum (the set of prime ideals) of the ring $\CC[t^3,t^4,t^5]$. Clearly the origin $P=(0,0,0)$ of ${\mathbb A}^3$ is a singular point of $C$. One has that
$$
\O_{C,P}=\CC[t^3,t^4,t^5]_{(t^3,t^4,t^5)}
$$
is not a Gorenstein singularity: the conductor of $\CC[t^3,t^4,t^5]_{(t^3,t^4,t^5)}\subset \CC[t]_{(t)}$ is $t^3\CC[t]_{(t)}$. Thus  $n_P=3$, an odd number, and $C$ cannot be Gorenstein at $P$.
\end{example}

%%%%%%%%%%%%%%%%%%%%%%%%%%%%%%%%%%%%%%%%%%%%%%%%%%%%%%%%%%%%%%%%%%%%%%%
\subsection{The Wronskian section after Widland--Lax} \label{sec:WSAWL}
We now explain the construction, due to Widland and Lax, of the Wronskian attached to a linear system $(L,V)$ on a Gorenstein curve. For simplicity we shall stick to the case of \emph{integral} (reduced, as usual, and irreducible) curves to avoid coping with linear systems possessing non zero sections identically vanishing along an irreducible component. For example if $X\cup Y$ is a uninodal reducibile curve of arithmetic genus $g$ the space of global sections of the dualising sheaf has dimension $g$ but there are non-zero sections vanishing identically along $X$ (or on $Y$). However if one considers a linear system on a reducible curve that is not degenerate on any component then everything goes through just as in the irreducible case.

\medskip
If $P\in C$ is a singular point on an (integral) curve $C$, the maximal ideal $\mathfrak m_P\subset \O_{C,P}$ is not principal and so there is no local parameter whose differential would be able to freely generate $\Omega^1_{C,P}$. But we can still consider the natural map $\Omega^1_C\ra \omega_C$ (cf.~Section \ref{sec78798}) and its composition
\[
\dd\colon \O_{C}\sra \omega_{C}
\]
with the universal derivation $\O_C\sra \Omega^1_{C}$.

Let now $(L,V)$ be a $g^r_d$ on the (Gorenstein) curve $C$, and let $P$ be any point (smooth or not). Let $(v_0,v_1,\ldots, v_r)$ be a basis of $V$. Then $v_{i,P}$, the image of $v_i$ in the stalk $L_P$, is of the form $v_{i,P}=f_i\cdot \psi_P$ where $f_i\in\O_{C,P}$ and $\psi_P$ generates $L_{P}$ over $\O_{C,P}$. Letting $\sigma_P$ be a generator of $\omega_{C,P}$ over $\O_{C,P}$, one can define regular functions $f_i', f_i^{(2)}, \ldots, f_i^{(r)}\in\O_{C,P}$ through the identities
$$
\dd f_i=f_i'\cdot \sigma_P,\qquad \dd f_i^{(j-1)}=f^{(j)}\cdot \sigma_P
$$
in $\omega_{C,P}$, for each $i=0,1,\ldots,r$ (cf.~also Section \ref{sec78798}). If $P$ were a smooth point, one could take $\sigma_P=\dd z$, where $z$ is a generator of the maximal ideal $\mathfrak m_P\subset \O_{C,P}$, thus recovering the classical situation.

\begin{definition}\label{defWL}
The Widland--Lax (WL) \emph{Wronskian} around $P\in C$ is the determinant
\be\label{WLdet42}
 \WL_{V,\sigma_P}=
\begin{vmatrix}
f_0&f_1&\ldots&f_r\\ 
f'_0&f'_1&\ldots&f'_r\\ 
\vdots&\vdots&\ddots&\vdots\\
f_0^{(r)}&f_1^{(r)}&\ldots&f_r^{(r)}
\end{vmatrix}
\in \O_{C,P}.
\ee
A point $P$ is said to be a $V$-\emph{ramificaton} point (or also a $V$-Weierstrass point) if $\WL_{V,\sigma_P}(P)=0$, that is, if $\ord_P \WL_{V,\sigma_P} >0$.%, in the sense of Definition \ref{def:ordvanishing}.
\end{definition}

Our next task will be to show that the germ \eqref{WLdet42}, as well as its vanishing at $P$, does not depend on the choice of the generators $\psi_P$ and $\sigma_P$ of $L_P$ and $\omega_{C,P}$ respectively; then we will use the explicit description of  $\omega_C$ in the previous section to check that singular points are $V$-ramification points with high weight.

So if $\phi_P$ and $\tau_P$ are others generators, then $v_i=g_i\phi_P$ and $\dd g^{(i-1)}=g^{(i)}\tau_P$. Let $\psi_P=\ell_P\phi_P$ and $\sigma_P=k_P\tau_P$. Then a straightforward exercise shows that
\[
\WL_{V,\sigma_P}=\ell_P^{r+1}k_P^{r(r+1)/2}\cdot \WL_{V,\tau_P}.
\]
This proves at once that the vanishing is well defined and that  all the sections $\WL_{V,\sigma_P}$ patch together to give a global section 
\[
\WL_{V,P}\in H^0\left(C,L^{\otimes r+1}\otimes \omega_C^{\otimes r(r+1)/2}\right).
\]
If $f\in \O_{C,P}$ is any germ, according to Definition \ref{def:ordvanishing} one has 
\be\label{fsdfsq}
\ord_Pf=\dim_\CC\frac{\O_P}{f\cdot \O_P} =
\dim_\C\frac{\widetilde{\O}_P}{f\cdot \widetilde{\O}_P}=\sum_{Q\in \nu^{-1}(P)}\ord_Q f,
\ee
where in the last equality $f$ is seen as an element of $\O_{\widetilde{C},Q}$ via $\O_{C,P}\subset \widetilde \O_{C,P}\subset \O_{\widetilde C,Q}$.

\begin{definition} 
Let $P\in C$. Define the $V$-\emph{weight} of $P$ and \emph{total} $V$-\emph{ramification weight} as
\[
\wt_V(P)\defeq \ord_P\WL_{V,P},\qquad \wt_V \defeq \sum_{P\in C} \wt_V(P).
\]
\end{definition}

According to \eqref{fsdfsq}, one can compute the $V$-weight at $P$ as
\[
\wt_V(P) =
\sum_{Q\in \nu^{-1}(P)}\ord_Q\WL_{V,P}.
\]

\begin{prop}\label{prophw}
Let $(L,V)$ be a $g^r_d$ on a Gorenstein curve $C$ of arithmetic genus $g$. Then
\be
\wt_V=(r+1)d+(g-1)r(r+1).\label{eq:gorew}
\ee
Moreover, for all $P\in C$, the inequality
\be 
\wt_V(P)\geq \delta_Pr(r+1)
\ee
holds, with $\delta_P$ as defined in \eqref{deltap}. 
That is, singular points have ``high weight''.
\end{prop}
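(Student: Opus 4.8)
The plan is to prove the two assertions separately. For the global formula \eqref{eq:gorew}, I would mimic the smooth-curve argument leading to the Brill--Segre formula \eqref{eq:totwei}. Since $C$ is Gorenstein, the relative dualising sheaf $\omega_C$ is invertible, so the jet extensions $J^r(L)$ are genuine vector bundles of rank $r+1$, and the section $\WL_{V,P}$ is precisely the Wronskian $\det D^r$ living in $H^0(C, \bigwedge^{r+1}J^r(L))$. The total weight $\wt_V = \ord \WL_{V,P}$ equals $\deg \bigwedge^{r+1}J^r(L)$. Applying the exact sequences of Proposition \ref{prop:exactsequence} repeatedly (now with $\omega_C$ in place of $K_C$), I would obtain by induction the identification $\bigwedge^{r+1}J^r(L) = L^{\otimes r+1}\otimes \omega_C^{\otimes r(r+1)/2}$. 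Taking degrees and using $\deg \omega_C = 2g-2$ for a Gorenstein curve of arithmetic genus $g$ (which follows from $h^0(\omega_C) = g$ and Riemann--Roch, or directly from \eqref{eqn:gor82328}) would yield $\wt_V = (r+1)d + (g-1)r(r+1)$ exactly as in the smooth case. The key point is simply that every ingredient of the smooth proof survives verbatim once local freeness of $\omega_C$ is in hand.

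For the local inequality I would work analytically at a single point $P$ and use the normalisation $\nu\colon \widetilde C \to C$. The idea is to compute $\ord_P \WL_{V,P} = \sum_{Q\in \nu^{-1}(P)}\ord_Q \WL_{V,P}$ via \eqref{fsdfsq}, comparing the Wronskian formed with the derivation $\dd\colon \O_C \to \omega_C$ against the classical Wronskian formed upstairs on the smooth curve $\widetilde C$. The generator $\sigma_P$ of $\omega_{C,P}$, pulled back to $\widetilde C$, differs from a local generator $\dd z$ of $\Omega^1_{\widetilde C}$ by a unit times a factor measuring the singularity; concretely, the regular differentials generating $\omega_{C,P}$ vanish to a prescribed order along the branches dictated by the conductor. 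I would track how each differentiation step $\dd f^{(j-1)} = f^{(j)}\sigma_P$ introduces extra vanishing when read on the normalisation, and show that passing from the upstairs Wronskian to $\WL_{V,\sigma_P}$ multiplies by a factor whose order of vanishing, summed over $Q \in \nu^{-1}(P)$, is at least $\delta_P r(r+1)$.

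The main obstacle I expect is the local comparison controlling exactly how much extra vanishing the singularity forces. The clean way to organise this is through the Gorenstein characterisation $n_P = 2\delta_P$ of Proposition \ref{serre2}: the regular differentials spanning $\omega_{C,P}$ are, on each branch, differentials of functions in the conductor, so $\sigma_P$ read on $\widetilde C$ acquires order of vanishing summing to roughly $2\delta_P$ across the branches, and the $r(r+1)/2$ differentiations in building the Wronskian determinant amplify this to the factor $r(r+1)$ multiplying $\delta_P$. Making this bookkeeping precise --- matching the total order of the ``conversion factor'' between the two Wronskians against $\delta_P r(r+1)$ using $\sum_Q \delta_Q$-type identities --- is the technical heart of the argument; the global formula, by contrast, is essentially a formal consequence of Proposition \ref{prop:exactsequence}.
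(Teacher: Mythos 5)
Your plan is correct, but for the local inequality it takes a genuinely different route from the paper. For the total weight \eqref{eq:gorew} the two arguments coincide: the local WL Wronskians patch into a nonzero global section of $L^{\otimes r+1}\otimes\omega_C^{\otimes r(r+1)/2}$, and the formula is just the degree of that line bundle (the paper dismisses this as ``clear''). For $\wt_V(P)\geq \delta_P r(r+1)$, however, the paper argues globally: it passes to the partial normalisation $\nu_P\colon\nC_P\sra C$ of $C$ at the single point $P$, which is again Gorenstein of arithmetic genus $g-\delta_P$, pulls back the linear system to $(\nu_P^\ast L,\widetilde V)$, and applies \eqref{eq:gorew} on $\nC_P$; the total weight drops by exactly $\delta_P r(r+1)$, while the weights at all points other than $P$ are unchanged, so the weight at $P$ must absorb at least the difference (and the extraweight $E(P)$ is identified with the ramification of the pulled-back system above $P$). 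Your route is instead local: writing a generator of $\omega_{C,P}$ as $\tau/h$ with $h$ a generator of the conductor (the Garcia--Lax proposition quoted in the paper), the derivation $\dd f = f'\sigma_P$ becomes $f' = h\,\dd f/\dd t$ on each branch, the passage between iterated derivatives is lower triangular with diagonal $1,h,\dots,h^r$, so $\WL_{V,\sigma_P}$ equals $h^{r(r+1)/2}$ times the classical Wronskian on $\widetilde C$, and by \eqref{fsdfsq} the orders at the points of $\nu^{-1}(P)$ sum to $\tfrac{1}{2}r(r+1)\,n_P=\delta_P r(r+1)$ by the Gorenstein criterion $n_P=2\delta_P$, plus a nonnegative upstairs contribution. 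This is sound, and it is precisely the mechanism at work in the paper's explicit examples (the rational quartic and the nodal cubic); what the paper's argument buys is brevity --- no Wronskian transformation identity, no conductor bookkeeping --- and a cleaner identification of the extraweight, whereas yours gives an explicit branch-by-branch formula. One small correction to your wording: the generator $\sigma_P$ does not \emph{vanish} along the branches, it acquires poles of total order $n_P$; the positive-order factor enters through the derivation $D_{\sigma_P}=h\,\dd/\dd t$, and it is this factor $h$ that produces $h^{r(r+1)/2}$ in the determinant.
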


In particular if $L=\omega_C$, one has that $\wt_{\omega_C}(P)\geq \delta_Pg(g-1)$.
Proposition \ref{prophw} in \cite{LaxD} relies on an explicit description of the generator of the dualising sheaf around the singularities, that we shall review below just to provide a few examples illustrating the situation. The verification we offer here makes evident how the theory by Lax and Widland offers the right framework to study the classical Pl\"ucker formulas in terms of degenerations.

\begin{proofof}{Proposition \ref{prophw}}  Formula~(\ref{eq:gorew}) is clear. 
Let now $P$ be a singular point of $C$ and $\nu_P\colon\nC_P\sra C$ be the partial normalisation of $C$ around a singular point $P$. Then $\nC_P$ is Gorenstein of arithmetic genus $g-\delta_P$.
Consider the linear system $(\widetilde{V},\nu_P^\ast L)$, where $\widetilde{V}$ is spanned by $\nu_P^\ast v_0,\nu_P^\ast v_1,\ldots,\nu_P^\ast v_r$. It is a $g^r_d$ on $\nC_P$. Applying the formula \eqref{eq:gorew} for the total weight to $\widetilde{V}$, we find 
$$
\wt_{\widetilde{V}}=(r+1)d+({g-1-\delta_P})r(r+1)=\wt_V-\delta_Pr(r+1).
$$
The $\widetilde{V}$-Weierstrass points on $\nC_P$  are the same as the $V$-Weierstrass points on $C$. Then  the difference counts the minimum  weight of the singular point $P$ with respect to $(L,V)$.
\end{proofof}

In general $\wt_V(P)=\delta_Pr(r+1)+E(P)$. The correction $E(P)$ is called the \emph{extraweight}. It is zero if no point of $\nu_P^{-1}(P)$ is a ramification point of the linear system $(\widetilde{V}, \nu_P^*L)$. 

\begin{example}\label{weightcusp24}
If $P\in C$ is a cusp, one has $\delta_P=1$, hence its weight is at least $r(r+1)$. However the vanishing sequence of $\widetilde{V}$ at the preimage of $P$ in the normalisation is $0,2,\ldots,r+1$. Then
$$
\wt_V(P)=r(r+1)+r=r(r+2).
$$
If $L=\omega_C$ then $\wt_{\omega_C}(P)=g^2-1$.
\end{example}

\medskip
Before offering a few examples of how the $\WL$ Wronskian works concretely in computations, we recall the following fact.

\begin{prop}[{\cite[p.~362]{GarciaLax}}]
Let  $\tau$ be a local section of  \, $\Omega^1_{\widetilde{C}}$ and let $\tau_Q$ its image in the stalk  $\Omega^1_{\widetilde{C},Q}$. Assume that  $\Omega^1_{\widetilde{C}, Q}=\O_{\widetilde{C},Q}\cdot \tau_Q$ for all $Q\in \nu^{-1}(P)$ and that $h$ generates the conductor in each local ring $\O_{\widetilde{C}, Q}$. Then $\tau/h$ generates $\omega_{C,P}$ over $\O_{C,P}$.
\end{prop}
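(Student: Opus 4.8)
\emph{Proof strategy.} The plan is to pin down $\omega_{C,P}$ explicitly as a fractional‑ideal multiple of $\Omega^1_{\widetilde C}$ inside the one–dimensional $K$–vector space $\Omega^1_K$ of meromorphic differentials on $\widetilde C$ (here $K$ is the function field), and then to recognise $\tau/h$ as a generator. Recall from Rosenlicht's description (see \cite[\S IV.10]{serre2}) that $\omega_{C,P}$ is the $\O_{C,P}$–module of \emph{regular differentials} at $P$, namely those $\omega\in\Omega^1_K$ satisfying $\sum_{Q\in\nu^{-1}(P)}\res_Q(a\omega)=0$ for every $a\in\O_{C,P}$; since $C$ is Gorenstein this module is free of rank one. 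First I would record the two local inputs already supplied by the hypotheses: the assumption $\Omega^1_{\widetilde C,Q}=\O_{\widetilde C,Q}\cdot\tau_Q$ says that $\tau$ is an $\widetilde\O_{C,P}$–basis of $\Omega^1_{\widetilde C}$ near $P$, while $h$ generating the conductor in each $\O_{\widetilde C,Q}$ gives $\mathfrak c_P=h\,\widetilde\O_{C,P}$, because the conductor is already an ideal of $\widetilde\O_{C,P}$.

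The key algebraic step is relative duality for the finite birational morphism $\nu$, which yields a canonical isomorphism $\Omega^1_{\widetilde C}\isom\Hom_{\O_{C,P}}(\widetilde\O_{C,P},\omega_{C,P})$, the differentials on the regular curve $\widetilde C$ being its own dualising module. Using that $\omega_{C,P}$ is invertible together with the standard identification $\Hom_{\O_{C,P}}(\widetilde\O_{C,P},\O_{C,P})=\mathfrak c_P$ (realising homomorphisms as multipliers $x\in K$ with $x\,\widetilde\O_{C,P}\subseteq\O_{C,P}$), this becomes, as submodules of $\Omega^1_K$,
\[
\Omega^1_{\widetilde C}\;=\;\mathfrak c_P\cdot\omega_{C,P}.
\]
Fix a generator $\eta$ of $\omega_{C,P}$ over $\O_{C,P}$. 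Substituting the two local descriptions above turns the identity into $\widetilde\O_{C,P}\cdot\tau=h\,\widetilde\O_{C,P}\cdot\eta$, an equality of rank–one free $\widetilde\O_{C,P}$–modules; comparing generators gives $\tau=h\,u\,\eta$ for some unit $u\in\widetilde\O_{C,P}^\times$, that is, $\tau/h=u\,\eta$.

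The delicate point—and the one I would treat with care—is that a unit of $\widetilde\O_{C,P}$ need not be a unit of $\O_{C,P}$, so proving the proposition comes down to showing $u\in\O_{C,P}^\times$, equivalently that $\tau/h$ lies in $\omega_{C,P}$ and avoids $\mathfrak m_P\,\omega_{C,P}$. Here I would return to Rosenlicht's residue characterisation directly: membership $\tau/h\in\omega_{C,P}$ means $\sum_{Q}\res_Q(a\,\tau/h)=0$ for all $a\in\O_{C,P}$, and since $1/h$ introduces poles of exactly the conductor orders, the vanishing of this residue sum against all of $\O_{C,P}$ is precisely the defining property $\mathfrak c_P=(\O_{C,P}:\widetilde\O_{C,P})$. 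Once $\tau/h\in\omega_{C,P}=\O_{C,P}\cdot\eta$ is established one gets $u\in\O_{C,P}$, and a second application of the residue pairing forces $u$ to be a unit, completing the argument.

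I expect this residue bookkeeping with the conductor, rather than the formal duality identity, to be the main obstacle, since it is exactly the compatibility between $\tau$, $h$ and the poles they create that guarantees $\tau/h$ is a genuine regular differential. It is precisely this local analysis of the generator of the dualising sheaf that the following examples (the cusp, and the singularity parametrised by $(t^3,t^4,t^5)$) are designed to illustrate.
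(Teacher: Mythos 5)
The survey states this proposition without proof (it is quoted from Garcia--Lax), so there is no in-paper argument to compare against; I can only assess your proposal on its own terms. Its first half is sound: duality for the finite birational morphism $\nu$ identifies the stalk of $\nu_\ast\Omega^1_{\widetilde C}$ at $P$ with $\Hom_{\O_{C,P}}(\widetilde\O_{C,P},\omega_{C,P})$, and combining $\Hom_{\O_{C,P}}(\widetilde\O_{C,P},\O_{C,P})=\mathfrak c_P$ with the invertibility of $\omega_{C,P}$ does give $\widetilde\O_{C,P}\cdot\tau=h\,\widetilde\O_{C,P}\cdot\eta$, hence $\tau/h=u\,\eta$ with $u\in\widetilde\O_{C,P}^{\times}$. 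Note also that your ``delicate point'' closes immediately once membership is known: if $u\in\O_{C,P}$ and $u$ is a unit on every branch, then $u\notin\mathfrak m_P$, so $u\in\O_{C,P}^{\times}$ because $\O_{C,P}$ is local; no second residue argument is needed.

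The genuine gap is the membership step itself. You claim that $\sum_{Q}\res_Q(a\,\tau/h)=0$ for all $a\in\O_{C,P}$ ``is precisely the defining property'' of the conductor; it is not. The conductor only controls pole orders (equivalently, which multiples of $\widetilde\O_{C,P}$ land in $\O_{C,P}$), whereas the residue condition sees the actual leading coefficients of $\tau$ on the several branches (or along the filtration at a unibranch point), and these are unconstrained by the stated hypotheses. Concretely, take the paper's own nodal cubic with normalisation $x=t^2-1$, $y=t(t^2-1)$, $\nu^{-1}(P)=\{t=\pm1\}$, and choose $h=t^2-1$ (a conductor generator on each branch) and $\tau=t\,\dd t$ (which generates $\Omega^1_{\widetilde C,Q}$ at both points). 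Then $\tau/h=\tfrac12\bigl(\tfrac{\dd t}{t-1}+\tfrac{\dd t}{t+1}\bigr)$ has residue sum $1\neq 0$ already against $a=1$, so it is not a regular differential and does not generate $\omega_{C,P}$; the true generator $\dd t/(t^2-1)$ has residues of opposite signs. A unibranch example: at the cusp, $\tau=(1+t)\,\dd t$, $h=t^2$ give $\res\bigl((1+t)\,\dd t/t^2\bigr)=1\neq0$. In your notation the unit is $u=t$, respectively $u=1+t$, a unit of $\widetilde\O_{C,P}$ not lying in $\O_{C,P}$. So with the hypotheses as literally quoted (an arbitrary stalk-generating $\tau$ and an arbitrary conductor generator $h$) the conclusion can fail, and no argument can close your gap without further input: what your duality computation really proves is that \emph{some} generator of $\omega_{C,P}$ has the form $u\,\tau/h$ with $u\in\widetilde\O_{C,P}^{\times}$, equivalently that $\tau/h$ generates as soon as it is a regular differential. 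To prove the proposition one must use the extra normalisation of $\tau$ relative to $h$ present in Garcia--Lax's original formulation; in the survey's examples this regularity is what is being verified by hand for the specific choice $\tau=\dd t$.
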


\begin{example} \label{Ex:quarticplane92}
Let us revisit from Example \ref{ex:oeu222} the rational irreducible quartic plane curve given by $x^4-y^3z=0$ in homogeneous coordinates $x$, $y$, $z$ on $\P^2$. It is Gorenstein of arithmetic genus $3$ with $\omega_C=\O_ {\PP^2}(1)|_{C}$. It has a triple point at $P \defeq (0:0:1)$ and a hyperflex at $Q \defeq (0:1:0)$, i.e.~a Weierstrass point of weight $2$. To see that the Weierstrass weight at $Q$ is $2$ one may argue by writing down the Wronskian of a basis of holomorphic differentials adapted at $Q$ (i.e.~$\omega_0=\dd t$, $\omega_1=t\dd t$ and $\omega_2=t^4\dd t$). The vanishing sequence is $0,1,4$ (equivalently, the gap sequence is $1,2,5$) so the weight is $2$.

In the chart $z\neq 0$, $V=H^0(C,\omega_C)$ is spanned by $(t^3,t^4,1)$, which are nothing but the parametric equations mapping $\PP^1$ onto the quartic. One has
\[
\O_{C,P}=\CC+\CC\cdot t^3+\CC\cdot t^4+ t^6\cdot \CC[t]_{(t)},\qquad n_P = 6,\qquad \delta_P=3.
\]
According to Proposition \ref{prophw}, $P$ is a Weierstrass point with weight at least $\delta_P\cdot 3(3-1)=18$. The exact  weight can be directly computed through the Wronskian as follows. 
The preimage of $P$ through the normalisation map is just one point $\widetilde P$. Then $\dd t$ generates $\Omega^1_{\widetilde C,\widetilde P}$ and therefore 
$\sigma=\dd t/t^6$ is a regular differential at $P$. A basis of the space of regular differentials at $P$ is then given by
\[
(\sigma, t^3\sigma, t^4\sigma),
\]
so the Wronskian is
\[
\begin{vmatrix}
1&t^3&t^4\cr
0&3t^8&4t^9\cr
0&24t^{13}&36t^{14}
\end{vmatrix}\in t^{22}\cdot \CC[t].
\]
It follows that $P$ is a Weierstrass point of weight 22, as anticipated in Example \ref{ex:oeu222}. Together with the hyperflex at $Q$, one fills the total weight, $24$, of a Gorenstein curve of genus $3$. The example shows that the point $P$ has extraweight $E(P) = 4$. This can also be computed by looking at the vanishing sequence of the linear system $\widetilde{V}$, generated by $(1,t^3,t^4)$. Clearly the vanishing sequence is $0,3,4$, whose weight is $4$, as predicted by the calculation above.

The output of this example is of course in agreement with the classical fact that the Hessian of the given plane curve cuts the singular points and the flexes. In this case the Hessian cuts indeed the singular point with multiplicity $22$ and the hyperflex $Q$ with multiplicity $2$.
\end{example}

\begin{remark}\label{ex:HessianWronskian}
A local calculation shows that the Hessian of a plane curve cutting the inflection points with respect the linear system of lines follows by the vanishing of the Wronskians at those points (at least when they are smooth).
\end{remark}

\begin{example}
The previous example was rather easy because we have dealt with a unibranch singularity (that is, $\nu^{-1}(P)$ consisted of just one point). To illustrate the behavior of the $\WL$ Wronskian with multibranch singularities, let $C$ be the plane cubic $x^3+x^2z-y^2z = 0$. It has a unique singular point, the node $P\defeq (0:0:1)$. The curve $C$ is Gorenstein of arithmetic genus $1$. Let us compute its $V$-weight, where $V$ denotes the complete linear system $H^0(C,\O_{\P^2}(1)|_{C})$. Clearly the coordinate functions $x$, $y$ and $z$ form a basis of $V$. They can be expressed by means of a local parameter $t$ on the normalisation $\nu\colon\P^1\ra C$. In the open set $z\neq 1$, indeed, $C$ has parametric equations $x=t^2-1$ and $y=t(t^2-1)$. The preimage of the point $P$ via $\nu$ are $Q_1\defeq (t-1)$ and $Q_2\defeq (t+1)$ thought of as points of $\Spec \C[t]$. One has that $\O_{C,P}=\CC+(t^2-1)\cdot \widetilde{\O}_{C,P}$, thus the conductor is $(t^2-1)$. Since $\dd t$ generates both $\Omega^1_{Q_1}$ and $\Omega^1_{Q_2}$, $\sigma_P\defeq \dd t/(t^2-1)$ generates the dualising sheaf $\omega_{C,P}$.
Let
$$
\sigma_{Q_1}\defeq\frac{\dd t}{t-1}\qquad \mathrm{and}\qquad \sigma_{Q_2}\defeq\frac{\dd t}{t+1}.
$$
Then one has
$$
\wt_V(P)=\ord_P\WL_{V,\sigma_P}=\ord_{Q_1}\WL_{V,\sigma_{Q_1}}+\ord_{Q_2}\WL_{V,\sigma_{Q_2}}.
$$
We shall show that 
$
\ord_{Q_1}\WL_{V,\sigma_{Q_1}}=3,
$
By symmetry, the same will hold for 
$
\ord_{Q_2}\WL_{V,\sigma_{Q_2}},
$
showing that the weight of $P$ as a singular ramification point is $6$ as expected.
For simplicity, let us put $z=t-1$. In this new coordinate the basis of $\nu^*L$ near $Q_1$ is given by $v_0\defeq z(z+2)$, $v_1\defeq z(z^2+3z+2)$ and $v_3=1$. The conductor is generated by $z$ near $Q_1$. Then the $\WL$-Wronskian near $Q_1$ is:
\[
\WL_{V,\sigma_{Q_1}}=
\begin{vmatrix}z^2+2z&z^3+3z^2+2z&1\cr
2z^2+2z&3z^2+6z^2+2z&0\cr 
4z^2+2z&6z^2+12z^2+2z&0
\end{vmatrix}=z^3(3z+4)\in z^3\cdot \CC[z]
\]
as desired. The computations around $Q_2$ are similar and then $P$ is a singular ramification point of weight $6$.
\end{example}

%%%%%%%%%%%%%%%%%%%%%%%%%%%%%%%%%%%%%%%%%%%%%%%%%%%%%%%%%%%%%%%%%%%%%%%%%%%%
\section{The class of special Weierstrass points}\label{sec:specialWP}

%%%%%%%%%%%%%%%%%%%%%%%%%%%%%%%%%%%%%%%%%%%%%%%%
\subsection{Introducing the main characters}
Let $M_g$ be the moduli space of smooth projective curves of genus $g\geq 2$. It is a normal quasi-projective variety of dimension $3g-3$. Let 
\[
M_g\subset \overline M_g
\] 
be its Deligne--Mumford compactification via \emph{stable} curves. It is a projective variety with orbifold singularities. Thus, its Picard group with rational coefficients is as well-behaved as the Picard group of a smooth variety. 
The boundary $\overline M_g\setminus M_g$ is a union of divisors $\Delta_i\subset \overline M_g$, each obtained as the image of the clutching morphism 
\[
\overline M_{i,1}\times \overline M_{g-i,1}\ra \overline M_g,
\]
defined by glueing two stable $1$-pointed curves $(X,x)$ and $(Y,y)$ identifying the markings $x$ and $y$. By a general point of $\Delta_i$ we shall mean a curve that lies in the image of the open part $M_{i,1}\times M_{g-i,1}$.
Note that $i$ ranges from $0$ to $[g/2]$, with $i = 0$ corresponding to irreducible uninodal curves. We use the standard notation $\delta_i$ for the class of $\Delta_i$ in $\Pic(\overline{M}_g)\otimes\Q$, and we always assume $i\leq g-i$.

\begin{figure}[h]
\begin{tikzpicture}[scale=0.85] 
\draw (-2,0) to[bend right] (0.3,1);
\draw (-0.3,1) to[bend right] (2,0);
\node at (0,0.62) {$\cdot$};
\node at (-2.3,0) {\small{$X$}};
\node at (2.3,0) {\small{$Y$}};
\node at (0.01,0.93) {\tiny{$A$}};
\node at (-1.1,0.3) {\small{$i$}};
\node at (1.35,0.3) {\small{$g-i$}};
\end{tikzpicture}\caption{A general element of the boundary divisor $\Delta_i\subset \overline M_g$.}
\end{figure}
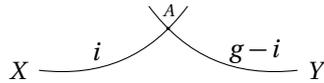

This section aims to sketch the calculation of the class in $\Pic(\overline{M}_g)\otimes\Q$ of the closure in $\overline{M}_g$ of the locus of points in $M_g$ corresponding to curves possessing a special Weierstrass point. Recall from Definition \ref{ex:special} that a Weierstrass point (WP, for short) is \emph{special} if its weight as a zero of the Wronskian is strictly bigger than $1$. Let us define
\be\label{eq:wtk}
\wt(k)\defeq \Set{[C]\in M_g | \textrm{$C$ has a WP with weight at least }k}.
\ee

\medskip
Let $M_{g,1}$ be the space of $1$-pointed smooth curves, and $\overline M_{g,1}$ be the moduli space of stable $1$-pointed curves.
Borrowing standard notation from the literature, define the ``vertical'' loci
\begin{align*}
\textrm V\mathbb D_{g-1} &\defeq 
\Set{[C,P]\in M_{g,1}|P\textrm{ is a WP whose first non-gap is }g-1}
\\
\textrm V\mathbb D_{g+1} &\defeq \Set{[C,P]\in M_{g,1}| \textrm{there is }\sigma\in H^0(C,K_C) \textrm{ such that }D^g\sigma(P)=0}.
\end{align*}
Taking their images along the forgetful morphism $M_{g,1}\sra M_g$ we get the subvarieties $\mathbb D_{g-1}$ and $\mathbb D_{g+1}$ of $M_g$, respectively. Diaz \cite[Section 7]{Diaz2} and Cukierman \cite[Section~5]{Cuk1} were able to determine the classes
\[
\left[\overline{\mathbb D}_{g\pm 1}\right]\in \Pic(\overline M_g)\otimes\Q. 
\]
The main observation of \cite{Gatto1} is that while computing the classes of $\overline{\mathbb D}_{g\pm 1}$  is quite hard, the computation of their sum is quite straightforward. Let 
\[
\overline{\textrm V\wt(2)}\subset \overline{M}_{g,1}
\]
be the closure of the locus of points $[C,P]\in M_{g,1}$ such that $P$ is a special Weierstrass point on $C$, namely a zero of the Wronskian of order bigger than $1$. The goal is to globalise the notion of Wronskian to families possessing singular fibres. This will be achieved through jet extensions of the relative dualising sheaf defined on a family of stable curves. Using (a) the invertibility of the relative dualising sheaf and (b) the locally free replacement of the principal part sheaves for such families, everything goes through via a standard Chern class calculation, as we show below.
We warn the reader that our computation is not performed on the entire moduli space but just on $1$-parameter families of stable curves with smooth generic fibre, in order to avoid delicate foundational issues regarding the geometry of the moduli space of curves.

%%%%%%%%%%%%%%%%%%%%%%%%%%%%%%%%%%%%%%%%%%%%%%%%
\subsection{Special Weierstrass points} \label{swp823}
Let $\pi'\colon\Ccal'\sra T$ be a (proper, flat) family of stable curves over a smooth projective curve $T$, such that $\Ccal'$ is a smooth surface,  with smooth generic fibre $\mathcal C'_\eta$. In particular, by the compactness of $T$, the fibre $\mathcal C'_t$ is smooth for all but finitely many $t\in T$. 
If the family is general, the singular fibres are general curves of type $\Delta_i$. The general fibre of type $\Delta_0$ is an irreducible uninodal curve of arithmetic genus $g$. Let $\pi\colon\Ccal\sra T$ be the family one gets by blowing up all the nodes of the irreducible singular curves. The irreducible nodal fibres get replaced by curves of the form $C\cup L$, where $C$ is a smooth irreducible curve of genus $g-1$ and $L$ is a smooth rational curve, intersecting $C$ transversally at two points (the preimages of the node through the blow up map). The rational component $L$ is the exceptional divisor which contracts onto the node by blow down. From now on we shall work with the new family 
\[
\pi\colon\Ccal\sra T,
\]
where all the singular fibres are reducible.

As for all families of stable curves, the dualising sheaf $\omega_\pi$ is invertible, and its pushforward
\[
\mathbb E_\pi\defeq \pi_\ast\omega_\pi
\]
is a rank $g$ vector bundle on $T$, called the \emph{Hodge bundle} (of the family). Its fibre over $t\in T$ computes
\[
H^0(\mathcal C_t,\omega_\pi|_{\mathcal C_t}) = H^0(\mathcal C_t,\omega_{\mathcal C_t}).
\] 
If $\Ccal_{t_0} = X\cup_AY$ is a uninodal reducible curve of type $\Delta_i$, one has a splitting
\be\label{formula:8237283}
H^0(\Ccal_{0}, \omega_{\Ccal_{0}})=H^0(X,K_X(A))\oplus H^0(Y, K_Y(A)).
\ee

A Weierstrass point on the generic fibre is a ramification point of the complete linear series attached to $K_{\Ccal_\eta}\defeq\omega_\pi|_{\Ccal_\eta}$. So it must belong to the degeneracy locus of the map of rank $g$ vector bundles
\[
\begin{tikzcd}
\pi^\ast\EE_\pi \arrow{rr}{\mathsf D^{g-1}}\arrow[dash]{dr} & &
J_\pi^{g-1}(\omega_\pi) \arrow[dash]{dl} \\
& \Ccal \arrow{d}{\pi} & \\
& T &
\end{tikzcd}
\]
The zero locus of the determinant map $\bw^g \mathsf D^{g-1}$
may be identified with a section $\mathbb W_\pi$ of the line bundle
$$
\mathscr L \defeq \bw^g J_\pi^{g-1}(\rds)\otimes \pi^\ast \bw^g\EE_\pi^\vee.
$$ 
The vanishing locus of this section cuts the Weierstrass points on the generic fibre. Moreover  $\mathbb W_\pi$ identically vanishes on the reducible fibres $\mathcal C_t$ of type $\Delta_i$ for $1\leq i\leq [g/2]$. Indeed, the identification \eqref{formula:8237283} shows that there exist nonzero regular differentials on $\mathcal C_t$ vanishing identically on either component.
Moreover $\mathbb W_\pi$ identically vanishes on the rational components $L$ gotten by blowing up the nodes of the original irreducible nodal fibres. 

A local computation due to Cukierman \cite[Proposition 2.0.8]{Cuk1} (but see also \cite{CuGaEs} for an alternative way of computing), determines the order of vanishing of $\wrp$ along  each component of the reducible fibres of $\pi$. Let $F\subset \mathcal C$ be the Cartier divisor corresponding to the zero locus of $\mathbb W_\pi$ along the singular fibres. Then, letting $\mathsf Z_\eta$ be the cycle representing $\overline{Z(\wrp|_{\Ccal_\eta})}\subset \mathcal C$, one has
\[
\left[Z(\wrp)\right]=\mathsf Z_\eta + F.
\]
One can view $\overline{Z(\wrp|_{\Ccal_\eta})}$ as the zero locus of the Wronskian section ``divided out'' by the local equations of the components of the singular fibres. More precisely, $\wrp$ induces a section $\widetilde{\mathbb W}_\pi$ of the line bundle $\mathscr L(-F)$, which coincides with $\wrp$ away from $F$.
Therefore we have
\be\label{robamenoF}
\mathsf Z_\eta =
c_1\left(\bw^g J^{g-1}_\pi(\omega_\pi)\right)-\pi^*c_1(\EE_\pi)-F
= \frac{1}{2}g(g+1)c_1(\rds)-\pi^*\lambda_\pi-F
\ee
where $\lambda_\pi\defeq c_1(\EE_\pi)$ denotes, as is customary, the first Chern class of the Hodge bundle of the family. From now on, we use the (standard) notation $K_\pi\defeq c_1(\omega_\pi)$.

\begin{remark}
Intersecting the class \eqref{robamenoF} with a fibre $\Ccal_t$, one gets
$$
\mathsf Z_\eta \cdot \Ccal_t=\frac{1}{2}g(g+1)K_\pi\cdot \mathcal C_t-\pi^\ast\lambda_\pi\cdot \Ccal_t -F\cdot \Ccal_t.
$$
But the second and third  products vanish because $\Ccal_t$ is linearly equivalent to the generic fibre (and the intersection of two fibres is zero), whereas the first term corresponds to a divisor of degree $(g-1)g(g+1)$ on $\mathcal C_t$. In the case where $t$ corresponds to a singular fibre, the degree of this divisor would be the total weight of the {\em limits} of Weierstrass points on that fibre.
\end{remark}

The issue is now to detect and compute the class of the locus of special Weierstrass points in the fibres of $\pi$. Since the family $\pi$ may have singular fibres, the traditional version of principal parts would not help unless one decided to focus on open sets where they are locally free. This is for example the approach followed in \cite{Cuk1}. However, using the locally free replacement provided by jet bundles, we can now consider the ``derivative'' $D\widetilde{\mathbb W}_\pi$ of the section $\widetilde{\mathbb W}_\pi\in H^0(\mathcal C,\mathscr L(-F))$, where $\mathscr L(-F))$ denotes the twist $\mathscr L\otimes_{\O_{\mathcal C}}\O_{\mathcal C}(-F)$. The derivative $D\widetilde{\mathbb W}_\pi$ is a global holomorphic section of the rank two bundle
\[
J^1_\pi(\mathscr L(-F)).
\]
By abuse of notation let us write simply $\overline{\textrm V\wt(2)}$ for the locus  $\overline{\textrm V\wt(2)}_\pi\subset \mathcal C$ defined by the zero locus of $D\widetilde{\mathbb W}_\pi$.

\begin{definition}
Let $C_0$ be any stable curve of arithmetic genus $g\geq 2$. A point $P_0\in C_0$ is said to be a limit of a (special) Weierstrass point if there exists a family $\TS\sra \Spec\C\llbracket t\rrbracket$ such that $\TS_\eta$ is smooth, $\TS_0$ is semistably equivalent to $C_0$ and there is a (special) Weierstrass point $P_\eta$ such that $P_0\in \overline{P_\eta}$.
\end{definition}

It turns out that  $\overline{\textrm V\wt(2)}$ is the locus of special Weierstrass points on smooth fibres of $\pi$. In fact if the family $\Ccal\sra T$ is general, then only singular fibres of the codimension $1$ boundary strata of $\overline{M}_g$ occur. If $X\cup_A Y$ is a general member of $\Delta_i$,  one may assume that $A$ is not a Weierstrass point neither for $X$ nor for $Y$. Then if $P_0\in X\subset X\cup_AY$ is a limit of a special Weierstrass point it must be a special ramification point of $K_X((g-i+1)P)$ by \cite[Theorem 5.1]{CuGaEs}. But by \cite{CuGaEs1}, for a general curve $X$ and for each $j\geq 0$,  there are only finitely many pairs $(P,Q)\in X\times X$ such that $Q$ is a special ramification point of the linear system $K_X((j+1)P)$. See also Example \ref{excusp} below.

It follows that the locus $\overline{\textrm V\wt(2)}$ is zero dimensional. Indeed, the  special Weierstrass points have the expected codimension $2$ in general family of smooth curves. Its class is given by the top (that is, second) Chern class of $J^1_\pi(\mathscr L(-F))$. Explicitly, we have
\be
\left[\overline{\textrm  V\wt(2)}\right]=c_2\left(J^1_\pi\left(\omega_\pi^{\otimes g(g+1)/2}\otimes\pi^*\bw^g\EE_\pi^\vee(-F)\right)\right).
\ee
By the Whitney sum formula applied to the short exact sequence
\[
0\ra \omega_\pi\otimes \mathscr L(-F)\ra J^1_\pi(\mathscr L(-F))\ra \mathscr L(-F)\ra 0,
\]
and recalling that \eqref{robamenoF} is computing precisely $c_1(\mathscr L(-F))$, one finds
\[
\left[\overline{\textrm V\wt(2)}\right] = 
\left(\frac{1}{2}g(g+1)K_\pi-\pi^\ast\lambda_\pi-F\right)\left(\frac{1}{2}g(g+1)K_\pi+K_\pi-\pi^\ast\lambda_\pi-F\right).
\]
Thus in $A^2(\mathcal C)$ we find
\[
\left[\overline{\textrm V\wt(2)}\right] = \frac{1}{4}g(g+1)(g^2+g+2)K_\pi^2
- (g^2+g+1)(K_\pi(F+\pi^*\lambda_\pi))
+ F^2,
\]
where we have used $(\pi^*\lambda_\pi)^2=0=F\cdot \pi^*\lambda_\pi$.
We want to compute the pushforward
\begin{multline}\label{eq:pistarvt}
\pi_\ast\left[\overline{\textrm V\wt(2)}\right] = \frac{1}{4}g(g+1)(g^2+g+2)\pi_\ast K_\pi^2 \\
-(g^2+g+1)\bigl(\pi_*(K_\pi\cdot  F)
+\pi_*(K_\pi\cdot \pi^*\lambda_\pi)\bigr)
+\pi_*F^2.
\end{multline}

The reason why we are interested in the class \eqref{eq:pistarvt} is that if $g\geq 4$ the degree of $\pi$ restricted to $\textrm V\wt(2)$ is $1$. Therefore, if we let 
\[
\overline{\wt(2)}\subset T
\]
be the locus of points parametrising fibres possessing special Weierstrass points, then its class is given by \eqref{eq:pistarvt}. 
The reason why for $g\geq 4$ the degree of $\pi$ is $1$, is because of the following important result, obtained by combining results by Coppens \cite{Coppens} and Diaz \cite{Diaz4}.

\begin{teo} \label{teoCopDiaz}
If a  general curve of genus $g\geq 4$ has a special Weierstrass point, then all the other points are normal.
\end{teo}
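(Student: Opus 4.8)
The plan is to recast the statement as a family of dimension estimates for Weierstrass loci and then to assemble these from the two cited computations. Recall from Definition~\ref{ex:special} that a Weierstrass point is \emph{normal} exactly when it has weight $1$, and \emph{special} when it has weight at least $2$. The assertion is therefore that a curve corresponding to a general point of the codimension-one special locus $\overline{\wt(2)}\subset M_g$ carries a single special Weierstrass point, of weight precisely $2$, while all of its remaining Weierstrass points are normal. It suffices to prove that the two \emph{extra-special} loci in $M_g$ --- the locus $\wt(3)$ of curves carrying a Weierstrass point of weight at least $3$, and the locus of curves carrying two distinct special Weierstrass points --- both have codimension at least $2$; a general point of $\overline{\wt(2)}$ then lies on neither, which is the claim.

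First I would run the estimates on the moduli space $M_{g,1}$ of one-pointed smooth curves, which has dimension $3g-2$, stratifying it by the canonical vanishing sequence of the marked point. The weight-one (normal) stratum has codimension one and vanishing sequence $(0,\dots,0,1)$; the two weight-two strata are precisely $\textrm V\mathbb D_{g-1}$, with vanishing sequence $(0,\dots,0,1,1)$, and $\textrm V\mathbb D_{g+1}$, with vanishing sequence $(0,\dots,0,2)$, matching the loci recalled above. The governing heuristic is that the stratum attached to a vanishing sequence of total weight $w$ has codimension $w$ in $M_{g,1}$, hence dimension $3g-2-w$; under the forgetful map $M_{g,1}\to M_g$, whose general fibre over the special locus is finite, it maps to a locus of codimension $w-1$ in $M_g$. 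This returns codimension one for $\overline{\wt(2)}=\overline{\mathbb D}_{g-1}\cup\overline{\mathbb D}_{g+1}$ and codimension at least two for $\wt(3)$, \emph{provided} these expected codimensions are genuinely attained. The two-special-point locus is treated in the same spirit on $M_{g,2}$: the weight-$\geq 2$ condition is imposed independently at each of the two markings, and the independence of these conditions places the curves carrying two special points in codimension at least two in $M_g$.

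The genuinely delicate point is to secure the upper dimension bounds --- that is, to rule out that the extra-special loci acquire excess dimension and end up matching $\overline{\wt(2)}$ itself in dimension. This is exactly where the hypothesis $g\geq 4$ and the two cited results enter. For $g\geq 4$ the hyperelliptic locus, on which each of the $2g+2$ branch points is a Weierstrass point of weight $\binom{g}{2}\geq 3$, has codimension $g-2\geq 2$, and the remaining Brill--Noether special loci that would force abnormally many special points are likewise of codimension at least two; hence they are automatically avoided by a general member of $\overline{\wt(2)}$. (This is precisely why the statement fails in genus $3$, where the hyperelliptic divisor is a component of the special locus carrying eight special Weierstrass points.) Diaz's analysis \cite{Diaz4} of the Weierstrass divisor through the versal deformation of a pointed curve supplies the lower bound, establishing that $\overline{\mathbb D}_{g\pm 1}$ are honest divisors of the expected dimension $3g-4$; Coppens's results \cite{Coppens} supply the complementary upper bound, showing that prescribing one special vanishing sequence on a general curve forces neither a second special point nor a point of higher weight, so that the weight-$\geq 3$ and two-special-point loci drop in dimension as predicted.

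Combining these, a general point of each component $\overline{\mathbb D}_{g\pm 1}$ of the special divisor lies outside both $\overline{\wt(3)}$ and the two-special-point locus, so the corresponding curve has exactly one special Weierstrass point, of weight precisely $2$, and all of its other Weierstrass points are normal. I expect the control of the upper bounds --- equivalently, the transversality and independence of the vanishing conditions imposed at distinct points, tracked in explicit coordinates on the versal deformation of the pointed curve --- to be the principal technical obstacle, and the reason the result is naturally assembled from the two separate sources rather than produced by a single parameter count.
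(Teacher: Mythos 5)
Your reduction is sound, but note that the paper offers no proof of this statement at all: it is quoted as a combination of results of Coppens and Diaz, which is exactly where your own argument places the entire substance (the genuinely attained, not merely expected, codimension-$\geq 2$ bounds for $\wt(3)$ and for the locus of curves carrying two distinct special Weierstrass points). So your proposal is consistent with, and essentially the same as, the paper's treatment --- a correct skeleton (a general point of the divisor $\overline{\wt(2)}$ avoids those two loci, whence exactly one special point and all others normal) resting on the same two references for the key dimension statements --- rather than an independent proof.
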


To complete the computation, let $F_i\subset \mathcal C$ be the (vertical) divisor corresponding to the zero locus of the Wronskian along the singular fibres of type type $\Delta_i$, for $1\leq i\leq [g/2]$. 
Thus $F=\sum_{i=1}^{[g/2]} F_i$ and clearly we have $F_{i_1}\cdot F_{i_2}=0$ for $i_1\neq i_2$. Moreover, we have decompositions
\[
F_i\defeq \sum_j F_{ij}, \qquad F_{ij}=m_iX_{j}+m_{g-i}Y_j,
\]
with each $F_{ij}$ supported on a fibre $X_j\cup_{A_j}Y_j$ of type $\Delta_i$. Recall that the notation means that $X_j$ and $Y_j$ have genus $i$ and $g-i$ respectively, and they meet transversally at the (unique) node $A_j$. The multiplicities $m_i$ (resp.~$m_{g-i}$) with which $\wrp$ vanishes along $X_j$ (resp.~$Y_j$) only depend on $i$. Using that $-Y_j^2=-X_j^2=X_j\cdot Y_j=[A_j]\in A^2(\mathcal C)$, it is easy to check that
\[
F_{ij}^2=\left(2m_im_{g-i}-m_i^2-m_{g-i}^2\right)[A_j].
\]

To compute \eqref{eq:pistarvt}, we will apply the projection formula $\pi_\ast(\pi^\ast\alpha\cdot \beta)=\alpha\cdot \pi_\ast\beta$.
The pushforward $\pi_*K^2_\pi$ is by definition the tautological class $\kappa_1\in A^1(T)$.  Define 
\[
\delta_{i,\pi} \defeq \sum_j \pi_\ast[A_j]\in A^1(T).
\]
This is the class of the points corresponding to singular fibres of type $\Delta_i$. We have the following identities in $A^1(T)$:
\begin{align*}
\pi_*(K_\pi\cdot \pi^*\lambda_\pi) &= 
\pi_*K_\pi\cdot\lambda_\pi=(2g-2)\lambda_\pi \\
\pi_*(K_\pi\cdot F_{ij}) &= m_i\pi_*(K_\pi\cdot  X_j)+m_{g-i}\pi_*(K_\pi\cdot  Y_j) \\
&= \left(m_i(2i-1)+m_{g-i}(2(g-i)-1)\right)\cdot \pi_\ast [A_j] \\
&= \left(2(im_i+(g-i)m_{g-i})-m_i-m_{g-i}\right)\cdot \pi_\ast [A_j].
\end{align*}

Substituting the above identities in \eqref{eq:pistarvt} we obtain
\begin{multline}\label{eq2:pistvt}
\pi_*\left[\overline{\textrm V\wt(2)}\right] = \frac{1}{4}g(g+1)(g^2+g+2)\kappa_1
-2(g^2+g+1)(g-1)\lambda_\pi
-c_0\delta_{0,\pi}-\sum_{i=1}^{[g/2]}c_i\delta_{i,\pi}
\end{multline}
where $\delta_0$ is the class of the locus in $T$ of type $\Delta_0$ (irreducible uninodal), $c_0$ is a coefficient to be determined and 
\begin{equation}\label{eq:Cmxy}
c_i=(g^2+g+1)\left(2(im_i+(g-i)m_{g-i})-m_i-m_{g-i}\right)
+2m_im_{g-i}-m_i^2-m_{g-i}^2.
\end{equation}
Now one uses one of the most fundamental relations between tautological classes. The class $\kappa_{1,\pi}$ and $\lambda_\pi$ are not independent: they satisfy the relation
\[
\kappa_{1,\pi}=12\lambda_{\pi}-\sum_{i}\delta_{i,\pi}.
\]
This is a conseguence of the Grothendieck--Riemann--Roch formula, as explained for instance in \cite{MumfordStability}. Thus formula~(\ref{eq2:pistvt}) can be simplified into
\begin{multline}\label{676762}
\pi_\ast\left[\overline{\textrm V\wt(2)}\right] = \left(3g(g+1)(g^2+g+2)-2(g^2+g+1)(g-1)\right)\lambda_\pi\\
-\sum_{i=0}^{[g/2]}\left(c_i+\frac{1}{4}g(g+1)(g^2+g+2)\right)\delta_{i,\pi},
\end{multline}
which, after renaming coefficients, becomes
\begin{equation}\label{eq2;prefinal}
\pi_\ast\left[\overline{\textrm V\wt(2)}\right] = \left(2+6g+9g^2+4g^3+3g^4\right)\lambda_\pi 
- a_0\delta_0- \sum_{i=1}^{[g/2]}b_i\delta_{i,\pi}.
\end{equation}
Clearly the expression \eqref{eq2;prefinal} is not complete: one still needs to determine the coefficients $a_0$ and $b_i$. Computing $b_i$ amounts to finding the explicit expressions for $m_i$, for all $1\leq i\leq [g/2]$. This has been done by Cukierman in his doctoral thesis (but see \cite[Proposition 6.3]{CuGaEs} for an alternative slightly more conceptual, although probably longer, proof).

\begin{teo}[{\cite[Prop.~2.0.8]{Cuk1}}]\label{cukieteo}
The multiplicities $m_i$  with which the Wronskian $\wrp$ vanishes along $X_j$ (of genus $i$), are given by:
\be\label{eq3:mxy}
m_i = \binom{g-i+1}{2}.
\ee
\end{teo}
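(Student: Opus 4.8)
The plan is to read off $m_i = \ord_{X_j}\wrp$ from a purely local computation at a general point of the genus-$i$ component $X := X_j$ of a general reducible fibre of type $\Delta_i$, writing $Y := Y_j$ for the genus-$(g-i)$ component and $A := A_j$ for the node. Since $\wrp = \det \mathsf D^{g-1}$ and the twist by $\pi^*\bw^g\EE_\pi^\vee$ contributes only a local unit, $\ord_X\wrp$ is the order of vanishing along $X$ of the determinant of the matrix of iterated derivatives computed in a local frame of $\EE_\pi$. First I would model the total space near the node by the standard smoothing $\{xy=s\}$, with $s$ a coordinate on $T$, so that $X=\{y=0\}$, $Y=\{x=0\}$, and $\omega_\pi$ is generated by $\eta = \mathrm dx/x = -\mathrm dy/y$. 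Along a fibre $\{xy=s\}$ the derivation $\mathrm d_\pi$ with respect to the generator $\eta$ is the logarithmic field $\theta = x\,\partial_x$, so the Wronskian becomes $\det(\theta^{\,a}\phi_k)_{0\le a\le g-1,\,1\le k\le g}$, where $\sigma_k = \phi_k\,\eta$ is a frame of $\EE_\pi$ and $\phi_k = f_k(x,s/x)$ on the fibre over $s$.

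Next I would exploit the splitting \eqref{formula:8237283}, which over the node reads $H^0(\mathcal{C}_0,\omega_{\mathcal{C}_0}) = H^0(X,K_X)\oplus H^0(Y,K_Y)$ (the residue theorem forbids a genuine pole at $A$). The $i$ sections supported on $X$ restrict to nonzero differentials on $X$ and contribute $s$-order $0$. Each of the $g-i$ sections supported on $Y$ vanishes on $X$, so $f_k(x,0)\equiv 0$ and $\phi_k = \sum_{b\ge 1} g_{k,b}(x)\,s^b x^{-b}$; hence the coefficient of $s^b$ in $\sigma_k$ is the differential $g_{k,b}(x)\,\mathrm dx/x^{\,b+1}$ on $X$, a section of $K_X((b+1)A)$. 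The key numerical input is $h^0(K_X((m+1)A))-h^0(K_X(mA))=1$ for every $m\ge 1$ (both computed by Riemann--Roch, with $h^1(K_X(mA))=h^0(-mA)=0$ as soon as $\deg = 2i-2+m>2i-2$). Performing column reduction over $\CC[[s]]$ to make the leading coefficients of the $Y$-columns independent in the pole-order filtration $H^0(K_X(A))\subset H^0(K_X(2A))\subset\cdots$, the reduced leading terms are thus forced to occupy the successive pole orders $2,3,\dots,g-i+1$, i.e.\ their $s$-orders are exactly $1,2,\dots,g-i$.

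Factoring these powers of $s$ out of the columns leaves a residual determinant whose value at $s=0$ is the classical Wronskian on $X$ of the $g$ leading differentials. These span $H^0(K_X((g-i+1)A))$, which has dimension $i-1+(g-i+1)=g$, so they form a basis of a non-degenerate linear series on $X$; its Wronskian vanishes only at finitely many points and is therefore nonzero at a general point of $X$. Consequently $\ord_X\wrp$ equals the sum of the column orders, namely $m_i = 0\cdot i + (1+2+\cdots+(g-i)) = \binom{g-i+1}{2}$, as claimed; the same argument with $X$ and $Y$ interchanged yields $m_{g-i}=\binom{i+1}{2}$, consistent with \eqref{eq3:mxy}.

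The hard part is justifying that the minimal orders $1,2,\dots,g-i$ are actually attained rather than some larger values: this amounts to saying that for a general family the leading coefficients $g_{k,b}(0)$ do not vanish accidentally, equivalently that $A$ is not a ramification point of the relevant aspect on $Y$ and that the node is smoothed generically, so that the $s$-expansions of the frame meet each step of the pole-order filtration transversally. Making this precise requires the careful local analysis at the node carried out by Cukierman \cite[Prop.~2.0.8]{Cuk1} (see also the more conceptual treatment in \cite[Prop.~6.3]{CuGaEs}); everything else is determinant bookkeeping resting on the dimension jumps above and on the non-degeneracy of the residual Wronskian.
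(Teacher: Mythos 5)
Your proposal is correct and follows essentially the same route as the paper's sketch of Cukierman's argument: work over a one-parameter smoothing of the node, identify the central fibre of the Hodge bundle via the splitting \eqref{formula:8237283}, choose a basis of $\pi_*\omega_\pi$ adapted to the order of vanishing along the genus-$i$ component, and read off the multiplicity as the sum of the column orders $1+2+\cdots+(g-i)=\binom{g-i+1}{2}$, the residual determinant being the (not identically zero) Wronskian of the independent leading aspects. Like the paper, you defer the one genuinely technical point — that the minimal orders are actually attained rather than exceeded — to the careful local analysis in \cite{Cuk1} (cf.~also \cite[Prop.~6.3]{CuGaEs}), so the two treatments match in substance.
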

The way Cukierman proves Theorem~\ref{cukieteo} is the following. He considers a family $f\colon\TS\sra S$ of curves of genus $g$ parametrised by $S=\Spec\C\llbracket t\rrbracket$, with smooth generic fibre and special fibre semistably equivalent to a uninodal reducible curve $X\cup_AY$ with components of genus $i$ and $g-i$ respectively. After checking that $f_\ast\omega_f\otimes k(0)$ is isomorphic to $H^0(K_X(A))\oplus H^0(K_Y(A))$, he constructs suitable global bases of $f_\ast\omega_f$ such that the first elements are non degenerate on one component and vanish on the other. He then computes the relative Wronskian using such bases and finds the multiplicity displayed in \eqref{eq3:mxy}. All the technical details are in \cite{Cuk1}. 

\medskip
Granting Theorem \ref{cukieteo}, we can now compute the right hand side of \eqref{eq2;prefinal}. We need to substitute the expressions \eqref{eq3:mxy} into the constant $c_i$ defined in \eqref{eq:Cmxy}. 
This finally gives (see also \cite{Gatto1} for more computational details)
\be\label{fwfw343}
b_i=(g^3+3g^2+2g+2)i(g-i).
\ee
We still have to determine $a_0$. To this end, we use the following argument, due to Harris and Mumford \cite{HaMu}. Consider the simple elliptic pencil $x_0E_1+x_1E_2$, where $E_1$ and $E_2$ are two plane cubics intersecting transversally at $9$ points. Let $\Scal$ be the blow-up of $\PP^2$ at the intersection points. This gives an elliptic fibration
\be\label{ellfb}
\epsilon\colon \Scal\sra \PP^1
\ee
with nine sections (the exceptional divisors of the blown up points. Let $\Sigma_1$ be any one of them. Then consider a general curve $C$ of genus $g-1$, and choose a constant section $P\colon C\sra C\times C$. Construct the family $\phi\colon\Fcal_1\sra \PP^1$, by gluing $C\times C$ and $\Scal$, by identifying $\Sigma_1$ with $P$. The fibre over a point $t\in\P^1$ is the union $C\cup E_t$, with $C$ meeting $E_t = \epsilon^{-1}(t)$ transversally at a single point. In other words, what varies in the family is just the $j$-invariant of the elliptic curve.

\begin{teo}[{\cite[Lemma 7.2]{Diaz2}}]
The fibres of $\phi\colon\Fcal_1\sra \PP^1$ contain no limits of special Weierstrass points, that is, $\phi_\ast[\overline{\mathrm V\wt(2)}]=0$.
\end{teo}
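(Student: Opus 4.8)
The plan is to prove the underlying geometric assertion --- that no fibre of $\phi$ carries a limit of a special Weierstrass point --- from which $\phi_\ast[\overline{\mathrm V\wt(2)}]=0$ follows at once, since $\overline{\mathrm V\wt(2)}$ is (supported on) the locus of such limits in the fibres of $\phi$. Every fibre of $\phi$ is a curve $C\cup_A E_t$ of type $\Delta_1$, with the genus $g-1$ curve $C$ and the attaching point $A\in C$ \emph{fixed and general}, meeting the genus $1$ curve $E_t$ transversally at $A$. By \cite[Theorem 5.1]{CuGaEs}, recalled in Section \ref{swp823}, a limit of a special Weierstrass point lying on $E_t$ must be a special ramification point of the complete linear system $K_{E_t}(gA)=\Ocal_{E_t}(gA)$, whereas one lying on $C$ must be a special ramification point of $K_C(2A)$. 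It therefore suffices to show that neither of these two complete linear systems admits a special ramification point.

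For the elliptic component I would compute the ramification directly. Since $E_t$ has genus $1$, Riemann--Roch gives $\dim H^0(\Ocal_{E_t}(gA-iQ))=g-i$ for $0\le i\le g-1$, while for $i=g$ the twisting bundle has degree $0$ and is trivial precisely when $g(Q-A)\sim 0$. Reading off the resulting dimension jumps shows that the vanishing sequence of $\Ocal_{E_t}(gA)$ at a point $Q$ is $(0,1,\dots,g-1)$ when $Q-A$ is not $g$-torsion, and $(0,1,\dots,g-2,g)$ when $Q-A$ is $g$-torsion (in particular when $Q=A$); in either case the ramification weight is at most $1$. Hence $\Ocal_{E_t}(gA)$ has only normal ramification points, for every $t$ and every position of $A$, and carries no special ones. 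For the component $C$ I would invoke \cite{CuGaEs1}: for a general curve of genus $g-1$, and for each $j\ge 0$, the pairs $(P,Q)$ for which $Q$ is a special ramification point of $K_C((j+1)P)$ form a finite set; taking $j=1$ and choosing the constant section $A$ general, the point $A$ avoids the finitely many admissible first coordinates, so $K_C(2A)$ has no special ramification point. Consequently no fibre $C\cup_A E_t$ with $E_t$ smooth hosts a limit of a special Weierstrass point.

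The remaining, and principal, obstacle is the finitely many parameters $t$ for which $E_t$ degenerates to an irreducible nodal cubic $N_t$, so that the fibre $C\cup_A N_t$ acquires an extra non-separating ($\Delta_0$-type) node $n_t$. Since $\overline{\mathrm V\wt(2)}$ is closed and empty over the dense locus of smooth-$E_t$ fibres, it can only be supported over these finitely many points, and I must rule them out as well. The idea is to follow the $g^2$ normal ramification points of $\Ocal_{E_t}(gA)$ --- namely $A$ together with its $g$-torsion translates --- through the degeneration $E_t\to N_t$: the group $E_t[g]\cong(\Z/g)^2$ specialises to the subgroup $\mu_g$ of $g$-torsion of the multiplicative smooth locus of $N_t$, so that $g$ of these simple zeros of the relative Wronskian survive as distinct smooth points of $N_t$, while the remaining $g^2-g$ specialise to the node $n_t$; meanwhile the $C$-aspect $K_C(2A)$ is unchanged and still free of special ramification. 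The hard part is to check that the normal points colliding at $n_t$ contribute nothing beyond the expected $\Delta_0$ vanishing that is already divided out in passing from $\mathbb{W}_\phi$ to $\widetilde{\mathbb{W}}_\phi$ --- equivalently, that the residual section $\widetilde{\mathbb{W}}_\phi$ has only simple zeros along each fibre, including at $n_t$. This is precisely the local Wronskian computation at a node carried out by Cukierman \cite[Prop.~2.0.8]{Cuk1} (see also \cite{CuGaEs}); granting it, $D\widetilde{\mathbb{W}}_\phi$ is nowhere zero, $\overline{\mathrm V\wt(2)}=\emptyset$, and hence $\phi_\ast[\overline{\mathrm V\wt(2)}]=0$.
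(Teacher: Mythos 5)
The paper does not actually prove this statement---it is quoted verbatim from Diaz \cite[Lemma 7.2]{Diaz2} and used as a black box to pin down $a_0$---so your argument has to stand on its own. Your treatment of the fibres $C\cup_A E_t$ with $E_t$ smooth is essentially the standard one and matches the toolkit the paper deploys elsewhere (the torsion computation showing all ramification points of $\O_{E_t}(gA)$ are simple, and the finiteness statement of \cite{CuGaEs1} for $K_C(2A)$). But even on these fibres your argument has a hole at the node: the criterion you invoke from \cite[Theorem 5.1]{CuGaEs} concerns points of $C\setminus\{A\}$ and $E_t\setminus\{A\}$, and your assertion that ``$K_C(2A)$ has no special ramification point'' is false at $A$ itself: the vanishing dimensions of $H^0(K_C(2A)-iA)$ are $g,g-1,g-1,g-2,\dots$, so $A$ has gap sequence $1,3,4,\dots,g+1$ and weight $g-1\geq 2$ for every choice of $A$; the finiteness of \cite{CuGaEs1} can only exclude special ramification \emph{away} from $A$. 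To rule out the node one needs the separate weight count (exactly the argument of the first Example in Section \ref{sec:examples}: the maximal weights on $E_t\setminus\{A\}$ and $C\setminus\{A\}$ already sum to $g^3-g$), showing that no Weierstrass point at all can limit to $A$.

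The principal gap, however, is the twelve fibres $C\cup_A N_t$ with $N_t$ an irreducible nodal cubic, which you correctly identify as the hard case but do not actually handle. The citation you offer does not do the job: Cukierman's \cite[Prop.~2.0.8]{Cuk1} computes the multiplicities $m_i=\binom{g-i+1}{2}$ with which the relative Wronskian vanishes along the components of a reducible fibre of compact type; it says nothing about the behaviour of a residual Wronskian at a non-separating ($\Delta_0$-type) node, let alone that it has ``only simple zeros'' there. Moreover, the whole $\widetilde{\mathbb W}_\phi$, $D\widetilde{\mathbb W}_\phi$ formalism you appeal to is the construction of Section \ref{swp823}, which presupposes a family with \emph{smooth} generic fibre; every fibre of $\phi$ is reducible, so $\mathbb W_\phi$ vanishes identically on the whole surface and $F$, $\widetilde{\mathbb W}_\phi$ are not defined for $\phi$ as in that section. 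Finally, by the paper's own definition a ``limit of a special Weierstrass point'' quantifies over \emph{arbitrary} smoothings $\TS\ra\Spec\C\llbracket t\rrbracket$ of the fibre, not over degeneration inside the pencil, so following the $g$-torsion points of $E_t$ into $N_t$ cannot by itself decide whether $C\cup_A N_t$ lies in $\overline{\wt(2)}$. Deciding that---i.e.\ controlling limits of special Weierstrass points on a stable curve with a non-separating node, where Eisenbud--Harris limit linear series do not apply---is precisely the content of Diaz's lemma at the $\Delta_0\cap\Delta_1$ fibres, and it remains unproved in your proposal.
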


Harris and Mumford computed the degrees of $\lambda$, $\delta_0$ and $\delta_1$ to be, respectively: $1$, $12$ and $-1$. Taking degrees on both sides of \eqref{eq2;prefinal}, with $\phi$ taking the role of $\pi$, we get the (numerical) relation
\[
0=\int_{\P^1}\phi_*\left[\overline{\textrm V\wt(2)}\right] =(2+6g+9g^2+4g^3+3g^4)\cdot 1 -a_0\cdot 12+b_1\cdot 1.
\]
Given the expression of $b_1$ computed in \eqref{fwfw343}, one obtains
\[
a_0=\frac{1}{6}g(g+1)(2g^2+g+3).
\]
We have therefore reconstructed the proof of the following result.

\begin{teo}[{\cite[Theorem 5.1]{Gatto1}}]
Let $\pi\colon \Ccal\sra T$ be a family of stable curves of genus $g\geq 4$ with smooth generic fibre. Then the class in $A^1(T)$ of the locus of points whose fibres possess a special Weierstrass point is
\begin{multline}\label{eq:wt2}
\pi_*\left[\overline{\mathrm V\wt(2)}\right]=\left(2+6g+9g^2+4g^3+3g^4\right)\lambda_\pi\\
-\frac{1}{6}g(g+1)(2g^2+g+3)\delta_0
-\sum_{i=1}^{[g/2]} (g^3+3g^2+2g+2)i(g-i)\delta_i.
\end{multline}
\end{teo}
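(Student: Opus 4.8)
The statement is the endpoint of the computation developed throughout this section, so the plan is to assemble the pieces already in place and to pin down the two families of coefficients $a_0$ and $b_i$ that remain undetermined in \eqref{eq2;prefinal}. First I would record the geometric input: by Theorem~\ref{teoCopDiaz}, on a general one-parameter family the special Weierstrass locus $\overline{\mathrm V\wt(2)}$ is the genuine (codimension two, hence zero-dimensional) degeneracy scheme cut out by $D\widetilde{\mathbb W}_\pi$ inside the rank two bundle $J^1_\pi(\mathscr L(-F))$, and $\pi$ restricted to it has degree one for $g\geq 4$. This simultaneously identifies $[\overline{\mathrm V\wt(2)}]$ with $c_2(J^1_\pi(\mathscr L(-F)))$ and guarantees that $\pi_*[\overline{\mathrm V\wt(2)}]$ really computes the class $[\overline{\wt(2)}]\subset T$ we are after.

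Next I would compute this second Chern class. Feeding the twist $\mathscr L(-F)$ into the short exact sequence of Proposition~\ref{prop:exactsequence} and applying the Whitney sum formula expresses $c_2$ as the product of the two first-Chern-class factors displayed above, in which $c_1(\mathscr L(-F)) = \tfrac12 g(g+1)K_\pi - \pi^*\lambda_\pi - F$ is precisely the class \eqref{robamenoF}. Expanding the product in $A^2(\Ccal)$ and pushing forward by the projection formula, I would invoke the three intersection identities $\pi_*K_\pi^2 = \kappa_1$, $\pi_*(K_\pi\cdot\pi^*\lambda_\pi)=(2g-2)\lambda_\pi$, together with the fibrewise evaluations of $\pi_*(K_\pi\cdot F_{ij})$ and $\pi_*F_{ij}^2$ in terms of the multiplicities $m_i$ and the node classes $\pi_*[A_j]$. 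The decisive simplification is the Mumford relation $\kappa_1 = 12\lambda_\pi - \sum_i\delta_{i,\pi}$ from Grothendieck--Riemann--Roch, which folds the $\kappa_1$-term into $\lambda_\pi$ plus boundary contributions and yields \eqref{eq2;prefinal} with the leading coefficient $2+6g+9g^2+4g^3+3g^4$ already fixed.

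To determine the boundary coefficients for $1\leq i\leq[g/2]$, I would apply Cukierman's Theorem~\ref{cukieteo}, supplying the vanishing multiplicities $m_i = \binom{g-i+1}{2}$; substituting these into \eqref{eq:Cmxy} and \eqref{676762} produces $b_i = (g^3+3g^2+2g+2)i(g-i)$ after a mechanical simplification. The coefficient $a_0$ of $\delta_0$ is the one quantity this chain does not reach directly, since the irreducible nodal fibres are not treated by the reducible-fibre bookkeeping; here the plan is the test-family argument of Harris and Mumford. One forms $\phi\colon\Fcal_1\sra\PP^1$ by gluing the trivial family $C\times C$, with $C$ a general genus $g-1$ curve, to the elliptic fibration $\epsilon\colon\Scal\sra\PP^1$ along a section, so that the fibres are of type $\Delta_1$ with varying $j$-invariant. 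Because no limit of a special Weierstrass point occurs in such fibres, one has $\phi_*[\overline{\mathrm V\wt(2)}]=0$; inserting the known degrees $\deg\lambda=1$, $\deg\delta_0=12$, $\deg\delta_1=-1$ into \eqref{eq2;prefinal} and solving the resulting numerical equation for $a_0$ gives $a_0=\tfrac16 g(g+1)(2g^2+g+3)$.

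The genuinely hard inputs, which I would cite rather than reprove, are Cukierman's local computation of the $m_i$ (an explicit relative-Wronskian calculation in a degenerating family, requiring adapted global bases of $f_*\omega_f$ that are nondegenerate on one component and vanish on the other) and the Coppens--Diaz finiteness statement underlying Theorem~\ref{teoCopDiaz}, which is what makes $\pi|_{\mathrm V\wt(2)}$ have degree one. Everything else is Chern-class bookkeeping on the surface $\Ccal$; the single conceptual subtlety to handle with care is the correction divisor $F$ and its fibrewise decomposition $F_{ij}=m_iX_j+m_{g-i}Y_j$, since it is exactly the identical vanishing of $\wrp$ along the vertical components that forces the twist by $-F$ and thereby fixes the two first-Chern-class factors entering $c_2$.
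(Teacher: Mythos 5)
Your proposal follows essentially the same route as the paper: identify $[\overline{\mathrm V\wt(2)}]$ with $c_2(J^1_\pi(\mathscr L(-F)))$ via the twisted Wronskian derivative, expand using Proposition~\ref{prop:exactsequence} and \eqref{robamenoF}, push forward with the projection formula and the relation $\kappa_1=12\lambda_\pi-\sum_i\delta_{i,\pi}$, fix the $b_i$ through Cukierman's multiplicities $m_i=\binom{g-i+1}{2}$, and pin down $a_0$ with the Harris--Mumford elliptic-pencil test family on which $\phi_*[\overline{\mathrm V\wt(2)}]=0$. The only slight inaccuracy is attributional: in the paper the zero-dimensionality of $\overline{\mathrm V\wt(2)}$ rests on the finiteness results of \cite{CuGaEs,CuGaEs1}, while Theorem~\ref{teoCopDiaz} (Coppens--Diaz) is used only to get that $\pi$ restricted to it has degree one, which your final paragraph in fact states correctly.
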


\begin{remark}
Let now $[\overline{\wt(2)}]$ be the class in $A^1(T)$ of the locus of points of $T$ corresponding to fibres carrying special Weierstrass points. By Theorem~\ref{teoCopDiaz}, for $g\geq 4$ one has
\[
\left[\overline{\wt(2)}\right]=\deg(\pi)\left[\pi(\overline{\textrm V\wt(2)})\right]=\left[\pi(\overline{\textrm V\wt(2)})\right]=\pi_*\left[\overline{\textrm V\wt(2))}\right],
\]
because $\deg(\pi)=1$.
We may conclude that for $g\geq 4$, the right hand side of \eqref{eq:wt2} is the expression of the class $[\overline{\wt(2)}]$.
\end{remark}

%%%%%%%%%%%%%%%%%%%%%%%%%%%%%%%%%%%%%%%%%%%%%%%%%%%%%%%%%%%%%%%%%%%%%%%%%%
\subsection{Low genus}
We observe that formula \eqref{eq:wt2} holds for genus $1$, $2$ and $3$ as well, and actually recovers classical relations among tautological classes.

%%%%%%%%%%%%%%%%%%%%%%%%%%%
\subsubsection{Genus $1$}
Recall the elliptic fibration $\epsilon$ from \eqref{ellfb}.
No member of the pencil (either a smooth or rational plane cubic) possesses Weierstrass points. In particular there are no special Weierstrass points. Then $[\overline{\wt(2)}]=0$. Setting $g=1$ in \eqref{eq:wt2} one obtains the relation 
\be\label{hjh2jk13}
12\lambda-\delta_0=0,
\ee
expressing the classical fact that $\epsilon\colon\mathcal S\ra \P^1$ has $12$ irreducible nodal fibres. Indeed, the degree of $\lambda$ on this pencil is $1$, as the relative dualising sheaf restricted to the section $\Sigma_1\subset \mathcal S$ is $\O_{\mathcal S}(-\Sigma_1)|_{\Sigma_1}$, which has degree $-\Sigma_1^2 = 1$.

%%%%%%%%%%%%%%%%%%%%%%%%%%%
\subsubsection{Genus $2$}
A curve of genus $2$ is hyperelliptic: it is a ramified double cover of the projective line. The Riemann--Hurwitz formula gives $6$ ramification points which are the Weierstrass points. All these ramification points are simple. This means that if $\Ccal\sra T$ is a family of curves of genus $2$, then
\be
0=\left[\overline{\wt(2)}\right]=130\lambda-13\delta_0-26\delta_1.\label{eq:Charr}
\ee
This recovers the well known relation $10\lambda-\delta_0-2\delta_1=0$, discussed in \cite{Mumford1983}, showing that the classes $\lambda,\delta_0,\delta_1$ are not independent in $\Pic(\overline{M}_2)\otimes\Q$. See \cite{Cornalba1988} for the generalisation and \cite{GattoEsteves} for the interpretation of the Cornalba and Harris formula generalising \eqref{eq:Charr} in the rational Picard group of moduli spaces of stable hyperelliptic curves.

%%%%%%%%%%%%%%%%%%%%%%%%%%%
\subsubsection{Genus $3$}
In genus $3$ the hyperelliptic locus is contained in $\textrm V\wt(2)$. Since each hyperelliptic curve of genus $3$ has $8$ Weierstrass points, the map $\pi$ restricted to it has degree greater than $1$. Since each hyperelliptic Weierstrass point has weight $3$, a local check performed carefully in \cite{Diaz1} shows that the degree of $\pi$ restricted to $\textrm VH_3$ is $16$. On the other hand it is known (see e.~g.~\cite{Diaz3}) that each genus $3$ curve possessing a hyperflex has only one such. So the degree of $\pi$ restricted to $\Hcal$, the hyperflex locus, is $1$ and then for $g=3$ formula \eqref{eq:wt2} can be correctly written as
$$
16\cdot [\overline H_3]+[\Hcal] = \left[\overline{\wt(2)}\right]=452\lambda-48\delta_0-124\delta_1.
$$
The calculation $[\overline H_3]=9\lambda-\delta_0-3\delta_1$ was already reviewed in Section \ref{estev}. Then, the class of the curves possessing a hyperflex is given by
\be
[\Hcal]=308\lambda-32\delta_0-82\delta_1.
\ee

\begin{example}
Consider a pencil of plane quartic curves with smooth generic fibre. Since it has no reducible fibres, the degree of $\delta_1$ is zero on this family. The degree of $\delta_0$ is $27$ while the degree of $\lambda$ is $3$. Then in a pencil of plane quartics one finds precisely 
$308\cdot 3-32\cdot 27= 60$ hyperflexes, as predicted by Proposition \ref{prop:hyperflexes} using the automatic degeneracy formula by Patel and Swaminathan.
\end{example}

%%%%%%%%%%%%%%%%%%%%%%%%%%%%%%%%%%%%%%%%%%%%%%%%%%%%%%%%%%%%%%%%%%%%%%%%%%%%%%%%%
\section{Further examples and open questions} \label{sec:Examples}

The purpose of this section is to show how the theory of Weierstrass points on Gorenstein curves may help to interpret some phenomenologies that naturally occur in the geometry and intersection theory of the moduli space of curves.

%%%%%%%%%%%%%%%%%%%%%%%%%%%%%%%%%%%%%%%%%%%%%%%%%%%%%%%%%%%%%%%%%%%%%%%%%%%%%%%%%
\subsection{The Examples}\label{sec:examples}

\begin{example}
Let $\pi\colon \TS\sra S\defeq \Spec \C\llbracket t\rrbracket$ be a family of stable curves, such that

\begin{enumerate} 
\item $\TS$ is a smooth surface analytically equivalent to $xy-t=0$,
\item $\TS_\eta$ is a smooth curve of genus $g$, and
\item $\TS_0$ is a stable  uninodal curve,  union of a smooth curve $X$ of genus $g-1$ intersecting transversally an elliptic curve $E$ at a point $A$, that is, $\TS_0=X\cup_A E$.
\end{enumerate}

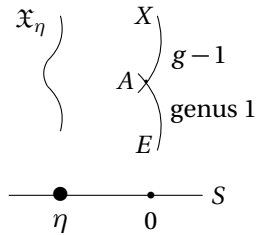
\begin{figure}[ht]
\begin{tikzpicture}[scale=0.85] 
    \draw (-1.5,-0.3) to (1.5,-0.3);    
    \node at (-0.7,-0.3) {\Large{$\bullet$}};
    \node at (0.7,-0.3) {\tiny{$\bullet$}};
    \node at (-1.15,2.4) {\small{$\mathfrak X_\eta$}};
    \draw (-0.7,2.5) to[bend left] (-0.9,1.8);
    \draw (-0.9,1.8) to[bend right] (-0.9,1.4);
    \draw (-0.9,1.4) to[bend left] (-0.7,0.7);
    \node at (-0.7,-0.75) {\small{$\eta$}};
    \node at (0.7,-0.7) {\small{$0$}};
    \draw (0.8,2.5) to[bend left] (0.5,1.3); 
    \draw (0.5,1.6) to[bend left] (0.8,0.4);
    \node at (0.6,2.5) {\small{$X$}};
    \node at (0.6,0.5) {\small{$E$}};
    \node at (1.67,1) {\small{genus $1$}};
    \node at (1.44,1.85) {\small{$g-1$}};
    \node at (1.75,-0.3) {\small{$S$}};
     \node at (0.3,1.5) {\small{$A$}};
    \node at (0.63,1.44) {$\cdot$};
\end{tikzpicture}   
\caption{A family of stable curves degenerating to a general member of $\Delta_1\subset \overline M_g$.}\label{fig1}
    \end{figure}

One says that $P_0\in \TS_0\setminus\{ A\}$ is a {\em limit of a Weierstrass point} if, possibly after a base change, there is a rational section $P\colon S\sra \TS$ such that $P_\eta$ is a Weierstrass point on $\TS_\eta$. The limit of Weierstrass points are very well understood for reducible curves of compact type, by means of many investigations due to Eisenbud, Harris and their school. In fact several classical references (see e.g.~\cite{Diaz2,EH1}) show that

\begin{enumerate}
\smallskip
\item [(a)] if $P_0\in E$, then $P_0\neq A$ is a ramification point of the linear system $\O(gA)$. Applying the Brill--Segre formula (\ref{eq:totwei}), the total weight $\wt_V$ of the ramification points of the linear system $V = H^0(E,\O(gA))$ is $g^2$, including the point $A$. Thus there are at most $g^2-1$ Weierstrass points on the smooth generic fibre degenerating to the elliptic component.  All the ramification points of $V$ are simple, as one can check via the sequence of dimensions 
\[
\dim V\geq \dim V(-A)\geq \cdots\geq \dim V(-gA)\geq \dim V(-(g+1)A)=0.
\]
\smallskip
\item [(b)] If $P_0\in X\setminus \{A\}$ is a limit of a Weierstrass point,  then it is a ramification point  of the linear system $W \defeq H^0(X,K_X(2A))$. Applying the Brill--Segre formula (\ref{eq:totwei}) once more, by replacing $r+1$ by $g$  and $d$ by $2g-2$, one obtains
$$
\wt_W=2g(g-1)+(g-2)g(g-1)=(g-1)(2g+g^2-2g)=g^2(g-1).
$$
The point $A$ contributes with weight $g-1$ (as one easily checks by looking at its vanishing sequence) and thus there are at most $(g-1)^2(g+1)$  Weierstrass points on $\TS_\eta$ degenerating to $X$. 
\end{enumerate}
It follows that no more than 
$$
(\wt_V-1)+(\wt_W-g+1)=\wt_V+\wt_W-g={g^3-g}
$$
Weierstrass points on $\TS_\eta$ can degenerate to $\TS_0$. Since the total weight of the Weierstrass points of $\TS_\eta$ is $g^3-g$, it follows that all the ramification points of the linear systems $V$ and $W$ are indeed limits of Weierstrass points. There are exactly $g^2-1$ distinct Weierstrass points degenerating on $E$ and a total weight of  $(g-1)^2(g+1)$ Weierstrass points on $\TS_\eta$ degenerating on $X$. Moreover, the counting argument shows that the node $A$ is not a limit.
Notice that $g^2-1$ is the weight of a cuspidal curve of arithmetic genus $g$, according to Example \ref{weightcusp24}. This is not a coincidence.

The situation just described is related to the behavior of a family of smooth genus $g$ curves, degenerating to a cuspidal curve of arithmetic genus $g$. The relative dualising sheaf coincides with the canonical sheaf on smooth fibres. The Weierstrass points of the smooth fibres degenerate to the Weierstrass points on the special fibre (with respect to the dualising sheaf), including the cusp, and the cusp has weight $g^2-1$ in the sense of Widland and Lax. Let us now show how to construct a model of the original family contracting the elliptic curve to a cusp.
The idea is to consider 
$\rds(-X)$,  the dualising sheaf twisted by $-X$ (a Cartier divisor, due to the smoothness hypothesis on $\TS$). We have
\[
\pi_*\rds(-X)\otimes\CC(0)\cong H^0(\mathfrak X_0,\rds(-X)|_{\mathfrak X_0}).
\]
Now observe that $h^0(\mathfrak X_0,\rds(-X)|_{{\mathfrak X_0}})\geq g = h^0(X,\omega_X(2A))$. But the restriction map 
\be\label{restriction1}
H^0(\TS_0, \rds(-X)|_{\TS_0})\ra H^0(X, \omega_X(2A)), \qquad \sigma\mapsto \sigma|_X,
\ee
is injective. Indeed, if $\sigma|_{X}=0$ then $\sigma(A)=0$, that is, $\sigma|_{E}\in H^0(\O_E(-A))=0$. Thus $\sigma=0$, which implies that the \eqref{restriction1} is an isomorphism.
Now the sheaf $\mathscr M \defeq \pi_*\rds(-X)$ maps the family $\pi\colon \mathfrak X\ra S$ in
$\PP(\pi_*\rds(-X))$, i.e. we have the following diagram:
\[
\begin{tikzcd}[column sep = large]
\mathfrak X\arrow{r}{\phi_{{\small \mathscr M}}}\arrow[swap]{d}{\pi} & \PP(\pi_*\rds(-X))\arrow{dl} \\
S &
\end{tikzcd}
\]
The generic fibre $\TS_\eta$ is mapped by $\phi_{\mathscr M}$ isomorphically onto its canonical image, a geometrically smooth curve of genus $g$, whereas the special fibre is a cuspidal curve having a cusp in $A$, and the elliptic component of $\TS_0$ is contracted to $A$ by $\phi_{\mathscr M}$. In fact, since the restriction of such a map to $E$ has degree $0$, one has $\phi_{\mathscr M}(Q)=\phi_{\mathscr M}(A)$ for all $Q\in E$. Then there are  $g^2-1$ Weierstrass points degenerating onto the cusp: this number equals the weight of the cusp as a Weierstrass point with respect to the dualising sheaf.
\end{example}

\begin{example}	
As another illustration of the same phenomenology, consider the classical case of a pencil of cubics, for instance
$$
\Ccal_t: \quad zy^2-x^3-tyz^2=0.
$$
The generic fibre $\Ccal_t$ is smooth. It has $9$ flexes, as classically known. But $\Ccal_0$  has only one smooth flex at $F\defeq (0:1:0)$. Thus the remaining flexes collapse to the cusp $P\defeq (1:0:0)$, as is visible by considering the normalisation. 
The Weierstrass points with respect to the linear system of lines can be detected via the  Wronskian determinant by Widland and Lax. It predicts that the cusp has weight $8$. The cubic $\Ccal_0$ is  the image of the map $(x_0^3, x_0x_1^2,x_1^3)\colon \PP^1\sra \PP^2$. In the open affine set $x_0=1$, it is just the map $t\sra (t^2,t^3)$.  
Notice that $\dd t$ is a 
regular differential at $P$ of $\AA^1\subset \PP^1$ and 
then $\sigma \defeq \dd t/t^2$ generates the dualising sheaf at the cusp (where $(t^2)$ is the conductor of $\O_P\subset \widetilde{\O}_P$). One has 
\[
(t^n)'\sigma\defeq \dd(t^n)=nt^{n-1}\dd t=nt^{n+1}\frac{\dd t}{t^2}=nt^{n+1}\sigma
\]
from which $(t^n)'=nt^{n+1}$.
The Wronskian around the point $P$ is then given by
\[
\begin{vmatrix}
1&t^2&t^3\cr 
0&(t^2)'&(t^3)'\cr 
0&(t^2)''&(t^3)''
\end{vmatrix}\,\,=\,\,
\begin{vmatrix}1&t^2&t^3\cr 
0&2t^3&3t^4\cr 
0&6t^4&12t^5
\end{vmatrix}\in t^8\cdot \CC[t].
\]
\end{example} 

\begin{example}
In \cite{EH1}, Eisenbud and Harris study limits of Weierstrass points on a nodal reducible curve $C$ which is the union of a curve $X$ of genus $g-i$ together with $1\leq i\leq g$ elliptic tails, a curve of arithmetic genus $g$. More precisely, if $\TS\ra S$ has smooth generic fibre $\TS_\eta$ and $\TS_0$ is semistably equivalent to $C$, then each elliptic tail carries $g^2-1$ limits of Weierstrass points on nearby smooth curves: these are in turn the ramification points of the linear systems $\O_{E_j}(A_j)$, where $A_j$ is the intersection point $X\cap E_j$. The remaining Weierstrass points of $\TS_\eta$ degenerate on smooth points of $X$. The theory predicts that if $P_0\in X$ is a limit of a Weierstrass point $P_\eta\in \TS_\eta$, then it is a ramification point of a linear system $V\in G(g, H^0(K_X(2A_1+\cdots +2A_i))$ such that $A_i$ is a base point of $V(-A_1-\cdots-A_i)$. If $\widehat{X}$ is the $i$-cuspidal curve got by making each $A_j$ into a cusp, as explained in \cite{serre2}, then $V=\langle \nu^*\omega_1,\ldots, \nu^*\omega_g\rangle$, where $(\omega_1,\ldots,\omega_g)$ is a basis of $H^0(\widehat X,\omega_{\widehat X})$ and $\nu\colon X\sra\widehat{X}$ is the normalisation.  
This linear system coincides with the one induced by the dualising sheaf of the irreducible curve with $i$ cusps that $X$ normalises.

     \begin{figure}[ht]
 \begin{tikzpicture}[scale=0.85]   
    \draw (-1,1.6) to[bend left] (-1,-1.6);
    \draw (-1.3,1) to (1.3,1);
    \draw (-1.3,0.3) to (1.3,0.3);
    \draw (-1.3,-1) to (1.3,-1);
    \node [anchor=west,right] at (1.3,1) {\small{$E_1$}};
    \node [anchor=west,right] at (1.3,0.3) {\small{$E_2$}};
    \node [anchor=west,right] at (1.5,-0.2) {\tiny{$\vdots$}};
    \node [anchor=west,right] at (1.3,-1) {\small{$E_i$}};
    \node [right] at (-1,1.58) {\small{$g-i$}};
    \node [anchor=west,right] at (-1.03,-1.6) {\small{$X$}};
    \node [anchor=west,right] at (3.45,-1.6) {\small{$\widehat X$}};
    \draw (4,1.9) to[bend left] (4,1.2);
    \draw (4,1.2) to[bend left] (4,0.5);
    \draw (4,0.5) to[bend left] (4,-0.2);
    \draw (4,-0.2) to[bend left] (4,-0.9);
    \draw (4,-0.9) to[bend left] (4,-1.6);
    \node [anchor=west,right] at (4.05,1.2) {\small{$A_1$}};
    \node [anchor=west,right] at (4.05,0.5) {\small{$A_2$}};
    \node [anchor=west,right] at (4.15,-0.05) {\small{$\vdots$}};
    \node [anchor=west,right] at (4.05,-0.9) {\small{$A_i$}};
 \end{tikzpicture}
 \caption{Stable reduction of a degeneration to a cuspidal curve.}
    \end{figure}
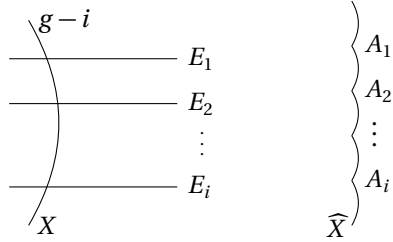
\end{example}

\begin{example}\label{excusp} 
Let $C$ be a smooth complex curve of genus $g-1\geq 1$ and let $\widehat{C}\sra C$ be a family of cuspidal curves parametrised by $C$ itself contructed as follows. If $Q\in C$ is a point, the fibre $\widehat{C}_Q$ is the cuspidal curve obtained from $C$ by creating a cusp at the point $Q$, that is, the cuspidal curve associated to the modulus $2Q$ in the sense of \cite[p.~61]{serre2}. In other words, $\widehat{C}_Q$ is the curve such that $\O_{\widehat{C}_Q,P}=\O_{C,P}$ if $P\neq Q$, whilst $\O_{\widehat{C}_Q,Q}$ is the subring of $\O_{C,Q}$ of the regular functions whose derivatives vanish at $Q$. One wonders which fibres of the family carry special Weierstrass points (with respect to the dualising sheaf) away from the cusp $\{Q\}$. Let $\nu: C\sra \widehat{C}_Q$ be the normalisation of $\widehat{C}_Q$. Then $\nu^*\omega_{\widehat{C}_Q}=K_C(2Q)$ and then the special ramification points, but $Q$, of $\widehat{C}_Q$ are the special ramification points of the linear system $K_C(2Q)$. For general $Q$, one cannot expect to find any  such point. So, solving the problem amounts to finding the locus $\textrm{SW}_1$ of all the pairs $(P,Q)\in C\times C$ such that $P$ is a special ramification point of $K_C(2Q)$.  The number $N(g)$ of such pairs is obtained by putting $i=1$ in \cite[formula (20)]{CuGaEs1}:
$$
N(g)\defeq \int_{C\times C}[\textrm{SW}_1]=6g^4 + 14g^3 + 10g^2 - 14g - 16.
$$
Notice that $N(1)=0$, because a rational cuspidal curve of arithmetic genus $1$ (i.e. a plane cuspidal cubic) has no hyperflexes.
\end{example}

\begin{example} 
Example \ref{excusp} can be interpreted within the geometrical framework of moduli space of stable curves as follows. Let $\Ccal \sra X$ be a family such that $\Ccal_Q$ is the curve $X\cup_{Q\sim 0}E$, where $(E,0)$ is an elliptic curve.
Then  $P_0\in X$ is a limit of a special Weierstrass point if and only if it is a special Weierstrass point of the linear system $K_C(2P)$. This fact has been generalised first of all in \cite{CuGaEs}: if $X\cup_AY$ is a uninodal stable curve of arithmetic genus $g$ union of a smooth curve of genus $i$ and a smooth curve of genus $g-i$ then $P_0\in X$ is limit of a special Weierstrass point on $\TS_\eta$ if and only if either $P_0$ is a ramification point of the linear system $K_X((g_Y+1)A)$ or $P_0$ is a ramification point of the linear system $K_X((g_Y+2)A)$ and $A$ is a Weierstrass point for the component $Y$. In case $Y$ is an elliptic curve, i.e. without Weierstrass points, the limits on $X$ are solely the ramification points of $K_X(2P)$, as claimed.
\end{example}

\begin{example} 
The first example not immediately treated by the theory of Eisenbud and Harris is that of a family $\TS\ra S$ of curves of genus $3$ such that the special fibre $\TS_0$ is the union of two elliptic curves intersecting transversally at two points $A_1$ and $A_2$ (the ``banana curve'').

\begin{figure}[ht]
\begin{tikzpicture}[scale=0.85] 
\draw (2,1) to[bend left] (0,-1);
\node [anchor=west,right] at (-0.7,-1) {$\tiny{E_1}$};
\draw (0.3,-1.2) to[bend left] (2.3,0.8);
\node [anchor=west,right] at (2.3,0.8) {$\tiny{E_2}$};
\end{tikzpicture}\caption{The banana curve: an example of a genus $3$ curve carrying a $1$-parameter family of limits of Weierstrass points.}    
\end{figure}
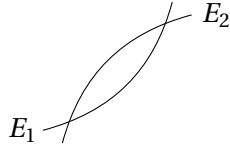

In this case each point on each component can be limit of Weierstrass points, in the sense that for each point $P_0$, say in $E_1$, there exists a smoothing family $\TS\sra S$ such that $P_0$ is limit of a Weierstrass point of a curve of genus $3$. All the Weierstrass points distribute themselves in twelve points on $E_1$ and twelve points on $E_2$. Esteves and Medeiros prove in \cite{Esteves_2002} that the variety of limit canonical system of the ``banana curve'' is parametrised by $\PP^1$. 

Indeed each $P_0\in E_i$ determines uniquely a point in the pencil of linear systems $V\in G(3, H^0(\O(2A_1+2A_2))$ which contains $H^0(\O(A_1+A_2))$. Thus for each component there is a $12:1$ ramified covering $E_i\sra \PP^1$ and the (fixed) ramification points are the limits of special Weierstrass points on nearby smooth curves. Also this example may be interpreted in terms of the theory of Widland and Lax (see \cite{CopGat} for details). In fact the linear system $V_{P_0}$ defined on $E_1$ maps $E_1$ to a plane quartic with a tacnodal singularity ($\delta_A=2$, local analytic equation $(y-x^2)^2=0$) at  the coincident images of $A_1$ and $A_2$ . Then the limits of Weierstrass points on $E_1$ are precisely the smooth flexes, while the information about the  Weierstrass points degenerating on the other components is lost in the tacnode. Notice that according the theory of Widland and Lax a tacnode must have weight at least $\delta\cdot 3\cdot 2=2\cdot 3\cdot 2=12$.
\end{example}

%%%%%%%%%%%%%%%%%%%%%%%%%%%%%%%%%%%%%%%%%%%%%%%%%%%%%%%%%%%%%%%%%%%%%%%%%%%%%%%%%
\subsection{Open Questions}

\subsubsection{Porteous Formula with excess} \label{porexc}
Consider the loci
\begin{align*}
\wt(2)&\defeq \Set{[C]\in M_g | C \textrm{ has a special Weierstrass point}},\\
\mathbb D_{g-1}&\defeq \Set{[C]\in M_g | C \textrm{ has a special Weierstrass point of type }g-1},\\
\mathbb D_{g+1}&\defeq \Set{[C]\in M_g | C \textrm{ has a special Weierstrass point of type }g+1}.
\end{align*}
Although  $\wt(2)$ is clearly equal to the set-theoretic union $\mathbb D_{g-1}\cup \mathbb D_{g+1}$, it is not obvious that 
\[
\left[\overline{\wt(2)}\right]=\left[\overline{\mathbb D}_{g-1}\right]+\left[\overline{\mathbb D}_{g+1}\right].
\]
This is the main result of \cite{Gatto1}. Within the general framework discussed in Section~\ref{sec:specialWP}, consider the maps of vector bundles
\[
\begin{tikzcd}
\pi^\ast\EE_\pi \arrow{rr}{\mathsf D^{g-2}}\arrow[dash]{dr} & &
J_\pi^{g-2}\omega_\pi \arrow[dash]{dl} \\
& \Ccal \arrow{d}{} & \\
& T &
\end{tikzcd} \qquad \begin{tikzcd}
\pi^\ast\EE_\pi \arrow{rr}{\mathsf D^{g}}\arrow[dash]{dr} & &
J_\pi^{g}\omega_\pi \arrow[dash]{dl} \\
& \Ccal \arrow{d}{} & \\
& T &
\end{tikzcd}
\]
The loci $\overline{\mathbb D}_{g-1}$ and $\overline{\mathbb D}_{g+1}$ are in fact in the degeneracy loci of the above maps; however these maps degenerate identically  along the special singular fibre which are divisors of $\Ccal$. So, to compute the class of the loci of $\overline{\mathbb D}_{g-1}$ and $\overline{\mathbb D}_{g+1}$ one should dispose of a Porteous formula with excess, generalising the residual formula for top Chern classes as in \cite[Example 14.1.4]{Ful}.
To our knowledge, such formulas are not known up to now.

\subsubsection{Computing automatic degeneracies}
It is an interesting problem, already raised in \cite{InvParts}, to compute the function $\AD^m(f)$ of automatic degeneracies (as discussed in Section~\ref{autdeg}) for more complicated plane curve singularities than the node. Some results for low values of $m$ have already been obtained in \emph{loc.~cit}. For instance it would be very useful to be able to determine the function $\AD(f)$ for cusps, ordinary triple points, tacnodes.

\subsubsection{Porteous formula for Coherent sheaves} 
To study situations like \ref{porexc} but avoiding the locally free replacement of the principal parts, S. Diaz proposed in \cite{Diaz3} a Porteous formula for maps of coherent sheaves. This was a question asked by Harris and Morrison in \cite{ModCurves1}. The purpose is that of getting rid of two issues at once: excess contributions, and the lack of local freeness of principal parts of the dualising sheaf at singularities. Diaz's theory is nice and elegant. However the main example he proposes is the computation of the hyperelliptic locus in genus $3$, which Esteves computed as sketched in Section \ref{estev}, again using  locally free substitute of principal parts. It would be interesting to work out more examples  to extract all the potential of  Diaz' extension of  Porteous' formula for coherent sheaves.

\subsubsection{Dimension estimates} 
Recall the definition \eqref{eq:wtk} of $\wt(k)$.
In \cite{GattoPonza} it is proven that for $g\geq 4$ the locus $\wt(3)$ of curves possessing a special Weierstrass point of weight at least $3$ has the expected codimension $2$. It is a hard problem to determine the irreducible components of $\wt(k)$ and their dimensions. For instance Eisenbud and Harris prove that if $k\leq [g/2]$ then $\wt(k)$ has at least  one irreducible component of the expected codimension $k$. In  general, however, the problem is widely open. It would be natural to conjecture that $\wt(k)\subset M_g$ has the expected codimension $k$ if $g\gg 0$, but there is really no rigorous evidence to support such a guess.

\subsubsection{Computing new classes} 
Only a handful of classes of  geometrically defined loci of higher codimension in $\overline{M}_g$ have been computed. For instance Faber and Pandharipande have determined the class of the hyperelliptic locus in $\overline{M}_4$ via stable maps \cite{Faber_2005}. Let $\Ccal\sra S$ be a family of stable curves of genus $g\geq 5$ parametrised by a smooth complete  surface $S$. Then many singular fibres belonging to boundary strata of $\overline{M}_g$ of higher codimension can occur. If $\pi\colon\TS\sra S$ is a family of stable curves of genus $4$ parameterized by a complete scheme of dimension at least $2$, then Faber and Pandharipande are able to compute the locus of points in $S$ corresponding to hyperelliptic fibres. Esteves and Abreu (private communication) are able to compute the  class $[\overline{H}_4]$ using the same method we discussed in Section \ref{estev}.
However  it seems a hard problem to determine the class in $A_{3g-5}(\overline{M}_g)$ (already for $g=4$) of the locus $\overline{\wt(3)}$. This would be the push forward of the third Chern class of
\[
J^2_\pi\left(\omega_\pi^{g(g+1)/2}\otimes \bw^g\EE_\pi^\vee\right),
\]
where $J^2_\pi$ is the locally free replacement constructed in the previous sections. Unfortunately, one has no control on the degree of the restriction of $\pi$ to the irreducible components of $\overline{\textrm V\wt(3)}$. In genus $4$ this locus should contain, with some multiplicity, the hyperelliptic locus, the (nonempty) locus of curves possessing a Weierstrass point with gap sequence $(1,2,3,7)$ and the (nonempty) locus of curves possessing a Weierstrass point with gap sequence $(1,2,4,7)$. These loci all have the expected codimension $2$ (by \cite{Laxuniversal}), but as far as we know their multiplicities in $\overline{\wt(3)}$ are not known.

\clearpage
\bibliographystyle{amsplain}
\bibliography{bib}

\end{document}